\numberwithin{equation}{section}
\newtheorem{theorem}{Theorem}[section]
\newtheorem{prop}[theorem]{Proposition}
\newtheorem{lemma}[theorem]{Lemma}
\newtheorem{definition}[theorem]{Definition}
\theoremstyle{definition}
\newcommand{\Z}{\mathbb Z}
\newcommand{\E}{\mathbb E}
\newcommand{\R}{\mathbb R}
\newcommand{\N}{\mathbb N}
\newcommand{\hm}{\mathbf{m}}
\newcommand{\Mp}{\mathbb{M}_p}
\newcommand{\Zm}{\mathcal{Z}}
\newcommand{\Zmm}{\mathcal{Z}^-}
\newcommand{\diam}{\textrm{diam}}
\newcommand{\Rtheta}{\theta^-}
\newtheorem{remark}{Remark}[section]
\newcommand{\ssubset}{\subset\subset}
\newcommand{\Capa}{\text{Cap}}
\newcommand{\BCap}{\text{BCap}}
\newcommand{\EsC}{\text{Es}}
\newcommand{\Esc}{\text{Esc}}
\newcommand{\es}{\text{ES}}
\newcommand{\KRW}{\mathbf k}
\newcommand{\SRW}{\mathbf s}
\newcommand{\BRW}{\mathbf b}
\newcommand{\Snake}{\mathcal S}
\newcommand{\bb}{\Gamma}
\newcommand{\rp}{\widetilde{r}}
\newcommand{\pS}{\mathbf p}
\newcommand{\rS}{\mathbf r}
\newcommand{\qS}{\mathbf q}
\newcommand{\qrS}{\mathbf q^-}
\newcommand{\PS}{\overline{\mathbf p}}
\newcommand{\RS}{\overline{\mathbf r}}
\newcommand{\DeFine}{\doteq}
\newcommand{\dist}{\rho}
\newcommand{\Ball}{\mathcal C}
\newcommand{\BallE}{\mathcal B}
\newcommand{\Rad}{\text{Rad}}
\newcommand{\Hm}{\mathcal{H}}
\begin{document}
\title[On CBRW I]{On the critical branching random walk I: Branching capacity and visiting probability}
\author{Qingsan Zhu}
\address{~Department of Mathematics, University of British
Columbia,
Vancouver, BC V6T 1Z2, Canada}
\email{qszhu@math.ubc.ca}
\date{}
\maketitle

\begin{abstract}
We extend the theory of discrete capacity to critical branching random walk. We introduce branching capacity for any finite subset of $\Z^d, d\geq5$. Analogous to the regular discrete capacity, branching capacity is closely related to the asymptotics of the probability of visiting a fixed finite set by a critical branching random walk starting from a distant point and the conditional distribution of the hitting point.
\end{abstract}

\section{Introduction}
The theory of discrete capacity, or the discrete potential theory, plays an essential role in the study of random walks (see e.g. \cite{L91, LL10, S76}). Let us first review some results on regular (discrete) capacity. For any finite subset $K$ of $\Z^d, d\geq3$, the escape probability $\es_K(x)$ is defined to be the probability that a random walk starting from $x\in\Z^d$ with symmetric jump distribution, denoted by $S_x=(S_x(n))_{n\in\N}$, never returns to $K$. The capacity of $K$, $\Capa(K)$ is given by:
$$
\Capa(K)=\sum_{a\in K}\es_K(a).
$$
We have
$$
\lim_{x\rightarrow \infty}\|x\|^{d-2}\cdot P(S_x \text{ visits } K)= a_d\Capa(K),
$$
where $a_d=\frac{1}{2d^{(d-2)/2}\sqrt{\det Q}}\Gamma(\frac{d-2}{2})\pi^{-d/2}$ and $\|x\|=\sqrt{x\cdot Q^{-1}x}/\sqrt{d}$ with $Q$ being the covariance matrix of the jump distribution.
Moreover, let $\tau_K=\inf\{n\geq 1:S_x(n)\in K\}$, then for any $a\in K$, we have
$$
\lim_{x\rightarrow \infty}P(S_x(\tau_K)=a|S_x \text{ visits } K)=\es_K(a)/\Capa(K).
$$
$\es_K(a)$ is usually called the equilibrium measure and the normalized measure $\es_K(a)/\Capa(K)$ is called the harmonic measure of set $K$. In fact, not only the distribution of the first visiting point, but also that of the last visiting point, conditioned on visiting $K$, converge to the same measure:
$$
\lim_{x\rightarrow \infty}P(S_x(\xi_K)=a|S_x \text{ visits } K)=\es_K(a)/\Capa(K),
$$
where $\xi_K=\sup\{n\geq 1:S_x(n)\in K\}$.

The results above apply to any symmetric irreducible jump distribution with some finite moment assumption. Unfortunately we do not find any reference for the nonsymmetric walks. However the following result is well-known and can be proved similarly to the symmetric case (see the Preface of \cite{LL10}). When the jump distribution is irreducible, nonsymmetric, with mean zero and, for simplicity, finite range, we have:
$$
\lim_{x\rightarrow \infty}P(S_x(\tau_K)=a|S_x \text{ visits } K)=\es^-_K(a)/\Capa(K),
$$
$$
\lim_{x\rightarrow \infty}P(S_x(\xi_K)=a|S_x \text{ visits } K)=\es_K(a)/\Capa(K),
$$
where $\es^-$ is the escape probability for the reversed random walk.

In this and subsequent papers \cite{Z162,Z163}, we study critical branching random walks. This paper and \cite{Z162} address the supercritical case ($d\geq5$) and \cite{Z163} the critical case ($d=4$). In this paper, we introduce branching capacity and construct analogous results for critical branching random walks (starting with a single initial particle) in $\Z^d,d\geq 5$. For a given distribution $\mu$ on $\N$ and a given distribution $\theta$ on $\Z^d$, we are interested in the branching random walk with offspring distribution $\mu$ and jump distribution $\theta$. We always assume (unless otherwise specified) in this paper that $d\geq 5$ and
\begin{itemize}
\item $\mu$ is a distribution on $\N$ with mean one and finite variance $\sigma^2>0$;
\item $\theta$ is a distribution on $\Z^d$ with mean zero not supported on a strict subgroup of $\Z^d$ and 'weak' $L^d$ in the following sense: there exists $C>0$, such that for any $r\geq 1$,
    \begin{equation}\label{as1}
    \theta(\{x\in \Z^d:|x|>r\})< C\cdot r^{-d}.
    \end{equation}
\end{itemize}
Note that \eqref{as1} holds if $\theta$ has finite $d$-th moments.

To extend the random walk results stated above to branching random walks, one needs to introduce analogues of the  escape probability. For a finite set $K$ of $\Z^d$, one could consider the probability that the branching random walk starting at $x$ (with offspring distribution $\mu$ and jump distribution $\theta$), denoted by $\Snake_x$, avoids $K$. However, this turns out not to be the right generalization. Two different extensions of the escape probability need to be defined: one for the first and one for the last visiting point of $K.$ We denote these by $\EsC_K(x)$ and $\Esc_K(x)$. Both correspond to infinite versions of the branching process. We defer the complete definitions to Section 3. We define the branching capacity of $K$ by
$$
\BCap(K)=\sum_{z\in K}\EsC_K(z)=\sum_{z\in K}\Esc_K(z),
$$
The equality of these two sums can be seen from the following.
\begin{theorem}\label{MT1}
For any nonempty finite subset $K$ of $\Z^d$ and $a\in K$, we have
\begin{equation}\label{m1-1}
\lim_{x\rightarrow \infty}\|x\|^{d-2}\cdot P(\Snake_x \text{ visits } K)= a_d\BCap(K);
\end{equation}
\begin{equation}
\lim_{x\rightarrow \infty}P(\Snake_x(\tau_K)=a|\Snake_x \text{ visits } K)=\EsC_K(a)/\BCap(K),
\end{equation}
\begin{equation}\label{m1-3}
\lim_{x\rightarrow \infty}P(\Snake_x(\xi_K)=a|\Snake_x \text{ visits } K)=\Esc_K(a)/\BCap(K),
\end{equation}
where $\tau_K$ and $\xi_K$ respectively are the first, and last respectively, visiting time of $K$ in a Depth-First search and $a_d$ is the same constant as in the random walk case.
\end{theorem}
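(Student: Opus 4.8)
The plan is to follow the classical last-exit/first-entrance decomposition that underlies the random walk statements, but to carry it out at the level of the branching process and its depth-first exploration. For the random walk, the proof of $\lim_x \|x\|^{d-2} P(S_x \text{ visits } K) = a_d \Capa(K)$ proceeds by summing over the last visit: $P(S_x \text{ visits } K) = \sum_{a \in K} G(x,a)\, \es_K(a)$ where $G$ is the Green's function, and then invoking the asymptotics $G(x,a) \sim a_d \|x\|^{-(d-2)}$. For the branching random walk the right object replacing the Green's function is the expected number of particles (or, more precisely, the expected occupation measure of the critical branching random walk, or better, a suitable ``two-sided'' infinite branching process object from Section 3). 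The first step, therefore, is to establish the branching analogue of the Green's function asymptotics: the expected occupation time of a site $a$ by $\Snake_x$ should be comparable to, and asymptotically a constant times, $\|x\|^{-(d-2)}$, with the constant $a_d$ inherited from the random walk Green's function via a renewal/summation identity. I expect this to already be available from the construction in Section 3 (the definitions of $\EsC_K$ and $\Esc_K$ as infinite branching processes are presumably engineered precisely so that such decompositions hold exactly, not just asymptotically).

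Second, I would set up the exact decompositions. Conditioning on $\Snake_x$ visiting $K$, one wants to write $P(\Snake_x \text{ visits } K)$ as a sum over the first visited point $a$ (in the depth-first order) of the product of (i) the ``probability of reaching $a \in K$ without having hit $K$ earlier in the depth-first search'' and (ii) a factor that, after taking $x \to \infty$, converges to $\EsC_K(a)$ — this is the content of the $\Esc$-type infinite process. Symmetrically, decomposing by the last visited point yields the $\Esc_K(a)$ normalization. The subtlety here, absent in the random walk case, is that ``first visit in depth-first search'' and ``last visit in depth-first search'' are genuinely different from ``the particle that is spatially closest to the starting configuration first/last in time'': the tree structure means that after the depth-first search dips into $K$ and comes back out, the remaining subtree is itself (conditionally) a branching random walk, and the relevant Markov-type property at the hitting time has to be stated for the snake/tree rather than for a walk. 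So I would first prove a strong-Markov-type statement for $\Snake_x$ at the depth-first first-hitting time of $K$, identifying the conditional law of the ``future'' as (a) the subtree hanging below the hitting particle and (b) the forest of subtrees to the right of the spine leading to the hitting particle; these two pieces are what get packaged into the infinite branching processes defining $\EsC$ and $\Esc$.

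Third, with the exact identities in hand, I would prove the three limits simultaneously. From the decomposition by last exit, $P(\Snake_x \text{ visits } K) = \sum_{a \in K}\big(\text{branching Green's function from } x \text{ to } a\big)\cdot\Esc_K(a) \sim a_d \|x\|^{-(d-2)} \sum_{a} \Esc_K(a)$, which gives \eqref{m1-1} with the sum $\sum_a \Esc_K(a)$; the decomposition by first entrance gives the same asymptotics with $\sum_a \EsC_K(a)$, forcing the equality $\sum_a \EsC_K(a) = \sum_a \Esc_K(a) =: \BCap(K)$ and simultaneously yielding \eqref{m1-3} and the middle identity after dividing. The conditional-distribution statements then follow because the numerator in, e.g., $P(\Snake_x(\xi_K) = a, \Snake_x \text{ visits } K)$ is exactly the $a$-th term of the last-exit sum, asymptotic to $a_d \|x\|^{-(d-2)} \Esc_K(a)$, and the denominator is asymptotic to $a_d \|x\|^{-(d-2)} \BCap(K)$.

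The main obstacle I anticipate is uniformity in the error terms: the branching Green's function asymptotics $g(x,a) = a_d\|x\|^{-(d-2)}(1+o(1))$ must hold with an $o(1)$ that is uniform over $a$ ranging in the fixed finite set $K$ (easy, $K$ finite) but, more seriously, the exact decomposition by first/last visit involves, in principle, summing contributions over particles arbitrarily deep in the tree, and one must control the probability that the depth-first search hits $K$, then leaves, then returns very far away — i.e., show that the ``infinite branching process'' defining $\EsC_K$, $\Esc_K$ is genuinely the $x\to\infty$ limit of the finite-$x$ conditional processes, with a quantitative rate. This is where assumption \eqref{as1} (the weak-$L^d$ tail of $\theta$) and the finite-variance hypothesis on $\mu$ enter: they are needed to guarantee that the relevant branching occupation sums converge and that the tail contributions are negligible. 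Controlling these tails — essentially a dominated-convergence argument for the branching process, using hitting-probability estimates for critical branching random walk that presumably occupy the technical core of Sections 2–4 — is the step I'd budget the most effort for.
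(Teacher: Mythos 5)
Your outline has the same architecture as the paper's argument (decompose by the first/last visited point in depth-first order, prove Green-function asymptotics, identify the escape probabilities as limits of harmonic measure from infinity, and deduce $\sum_a\EsC_K(a)=\sum_a\Esc_K(a)$ by comparing the two decompositions). But there is a genuine gap at the centre of the plan: you never say what the ``branching Green's function'' actually is, and the candidate you do name --- the expected occupation measure of $\Snake_x$ --- is the wrong object. By criticality the expected number of visits to $a$ is exactly $g(x,a)$, the ordinary random-walk Green function; it carries no information about the \emph{first} visit in the depth-first order and cannot produce the factor $\EsC_K(a)$. The paper's key step (Proposition 5.1) is an exact algebraic collapse: writing $\gamma$ for the spatial image of the spine from the root to $\tau_K$, the subtrees hanging off the older siblings along that spine must each avoid $K$, and summing the offspring law over the numbers of older and younger siblings produces precisely the generating function of the adjoint measure $\widetilde\mu$, i.e.\ the factor $1-\rS(\gamma(i-1))$ where $\rS(z)=P(\Snake'_z\text{ visits }K)$. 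Hence $P(\Snake_x\text{ visits }K\text{ via }\gamma)=\BRW(\gamma)$ for a \emph{non-branching} random walk with killing rate $\KRW=\rS$, and $P(\Snake_x\text{ visits }K\text{ at }a)=G_K(x,a)$, the Green function of that killed walk. This reduction is what makes your anticipated ``dominated convergence for the branching process'' tractable; your proposed strong-Markov statement for the snake is the right instinct but does not by itself yield a summable identity, and you defer it to Section 3 where it does not live.

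Two further points you would still have to supply. First, $G_K(x,y)\sim g(x,y)$ is not inherited from the random-walk asymptotics; it is a separate lemma (Section 6) requiring the a priori bound $\rS(z)\preceq|K|\,\|z\|^{2-d}$, a truncation of long paths via the local CLT, the exclusion of paths re-entering a small ball around $K$, and the Overshoot Lemma to handle unbounded jumps. Second, $\EsC_K(a)$ does not appear as a factor in an exact identity $P(\text{visit at }a)=G\cdot\EsC_K(a)$; it arises only after a second decomposition of the killed walk at the boundary of an intermediate ball $\Ball(\|x\|^{\alpha})$, as the limit of $\sum_b\Hm^{B}_{\KRW}(b,a)$ --- which is why $\EsC_K$ is defined via the \emph{reversed} infinite snake. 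For the last-visit statement \eqref{m1-3} the killing function changes to $\KRW'=\RS$ and an extra factor $1-\PS(\widehat\gamma)$ appears, which is why $\Esc_K$ is defined via the invariant snake rather than by symmetry; your proposal treats the two cases as mirror images, which they are not.
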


Let us make some comments here. First, if $\mu$ is the degenerate measure with $\mu(1)=1$, then the branching random walk is just the regular random walk, and $\EsC_K$ ($\Esc_K$ respectively) is just $\es^-_K$ ($\es_K$ respectively). In this case,
Theorem \ref{MT1} is the classical result for random walk (for a more detailed discussion, see Remark \ref{compare}).

Second, this result tells us that conditioned on visiting a fixed set, the 'first' (or the last) visiting point converges in distribution. It turns out that we can say more about this. In fact, we also show (see Theorem \ref{T-hm}) that conditioned on visiting $K$, the set of entering points converges in distribution. Since the distribution of the intersection between $K$ and the range of $\Snake_x$ can be determined by the entering points, we have
\begin{theorem}\label{M2}
Conditioned on $\Snake_x$ visiting $K$, the intersection between $K$ and the range of $\Snake_x$ converges in distribution, as $x\rightarrow\infty$.
\end{theorem}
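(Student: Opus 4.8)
The plan is to deduce Theorem \ref{M2} from Theorem \ref{T-hm} --- the convergence in distribution, conditionally on $\{\Snake_x\text{ visits }K\}$, of the configuration of entering points --- together with the branching Markov property, which recovers the trace of $\Snake_x$ on $K$ from the entering points alone by means of a kernel that does not depend on $x$. Throughout I regard $\Snake_x$ as a Galton--Watson tree (offspring law $\mu$) whose vertices carry $\Z^d$-valued positions built by adding i.i.d.\ $\theta$-steps along edges, with the root at $x$, and I write $\mathcal R(\mathcal S)$ for the set of sites visited by a branching random walk $\mathcal S$.

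First I would make the entering-point decomposition precise. Call a vertex $v$ of $\Snake_x$ an \emph{entering vertex} if its position lies in $K$ while its parent's does not, and a \emph{first-generation} entering vertex if moreover no strict ancestor of $v$ is an entering vertex. For $x\notin K$ the root lies outside $K$, so along the genealogical line from the root to any vertex with position in $K$ there is a first vertex with position in $K$, and that vertex is a first-generation entering vertex; hence every site of $\mathcal R(\Snake_x)\cap K$ is visited by the subtree rooted at some first-generation entering vertex. If $v_1,\dots,v_m$ are these vertices, with positions $z_1,\dots,z_m\in K$ listed with multiplicity, then
\begin{equation*}
\mathcal R(\Snake_x)\cap K=\bigcup_{i=1}^m\bigl(\mathcal R(\Snake^{(i)})\cap K\bigr),
\end{equation*}
where $\Snake^{(i)}$ is the subtree of $\Snake_x$ rooted at $v_i$, with its inherited positions. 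The $v_i$ form an antichain (a strict descendant of an entering vertex cannot be first-generation), so by the branching Markov property, conditionally on $(z_1,\dots,z_m)$ the $\Snake^{(i)}$ are independent, with $\Snake^{(i)}$ distributed as $\Snake_{z_i}$. Consequently the conditional law of $\mathcal R(\Snake_x)\cap K$ given the entering configuration $\mathcal E_x:=\sum_{i=1}^m\delta_{z_i}$ --- a finite point measure on the finite set $K$ --- equals $\Phi(\mathcal E_x)$, where $\Phi$ is the fixed kernel sending $\sum_i\delta_{z_i}$ to the law of $\bigcup_i R_i$ with $R_i$ independent and $R_i$ distributed as $\mathcal R(\Snake_{z_i})\cap K$. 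Crucially $\Phi$ has nothing to do with $x$.

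To conclude, observe that $\{\Snake_x\text{ visits }K\}=\{\mathcal E_x\neq 0\}=\{\mathcal R(\Snake_x)\cap K\neq\emptyset\}$, so for each nonempty $S\subseteq K$,
\begin{equation*}
P\bigl(\mathcal R(\Snake_x)\cap K=S\mid\Snake_x\text{ visits }K\bigr)=\sum_{\eta\neq 0}\Phi(\eta)(\{S\})\,P\bigl(\mathcal E_x=\eta\mid\mathcal E_x\neq 0\bigr).
\end{equation*}
By Theorem \ref{T-hm} the measures $P(\mathcal E_x\in\cdot\mid\mathcal E_x\neq 0)$ converge weakly, on the Polish space of nonzero finite point measures on $K$, to a limit law $P(\mathcal E_\infty\in\cdot)$; this forces tightness, and since $0\le\Phi(\eta)(\{S\})\le1$ with total mass $1$, the right-hand side converges to $\sum_{\eta\neq 0}\Phi(\eta)(\{S\})\,P(\mathcal E_\infty=\eta)$. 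This is the asserted convergence in distribution.

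The substance of the argument is therefore Theorem \ref{T-hm}, and that is where I expect the real difficulty. Its proof should follow the pattern of Theorem \ref{MT1}: one computes, after multiplication by $\|x\|^{d-2}$, the asymptotics of the probabilities that $\Snake_x$ enters $K$ with a prescribed entering configuration (equivalently, of the factorial moments of $\mathcal E_x$ tested against functions on $K$), and identifies the limits through the infinite-branching-process representations behind $\EsC_K$ and $\Esc_K$ and the associated harmonic-type measures. The main obstacle is the control of \emph{correlations} between several simultaneous entrances of distinct branches of $\Snake_x$ into $K$: this demands a genuine multi-point version of the branching-capacity estimates --- an inclusion--exclusion over entering configurations --- going well beyond the single first/last-visit estimates that suffice for Theorem \ref{MT1}.
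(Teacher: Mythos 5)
Your derivation of Theorem \ref{M2} from Theorem \ref{T-hm} -- via the branching property applied along the optional line of first-generation entering vertices, giving a fixed kernel $\Phi$ from entering configurations to laws of the trace on $K$, and then passing to the limit using total-variation (equivalently, weak) convergence on the countable space $\Mp(K)$ -- is correct and is exactly the one-line argument the paper intends, made precise. Your closing speculation about how Theorem \ref{T-hm} itself is proved (factorial moments and inclusion--exclusion) does not match the paper's actual argument, which decomposes according to the left-most path $\bb(\Snake_x)$ and uses the killing-function representation, but since you use Theorem \ref{T-hm} as a black box this has no bearing on the validity of your proof of Theorem \ref{M2}.
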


Third, this result gives the asymptotic behavior of the probability of visiting a fixed finite set by critical branching random walk starting from far away (for dimension $d\geq5$). In \cite{LL14}, Le Gall and Lin establish the following result (in our notation) in the subcritical dimensions $d\leq 3$ :
$$
\lim_{x\rightarrow\infty}\|x\|^{2}\cdot P(\Snake_x \text{ visits } 0)=\frac{2(4-d)}{d\sigma ^2}.
$$
They raise the question about the asymptotic of the probability of visiting a distant point in other dimensions ($d\geq 4$). Theorem \ref{MT1} solves this question for supercritical dimensions ($d\geq5$). The critical dimension $d=4$ is addressed in \cite{Z163} where we establish:
$$
\lim_{x\rightarrow\infty}\|x\|^{2}\log\|x\|\cdot P(\Snake_x \text{ visits } K)=\frac{1}{2\sigma ^2},
$$
and for any $a\in K$,
$$
\lim_{x\rightarrow\infty}P(\Snake_x(\tau_K=a)|\Snake_x \text{ visits } K)\;\text{exists},
$$
$$
\lim_{x\rightarrow\infty}P(\Snake_x(\xi_K=a)|\Snake_x \text{ visits } K)\;\text{exists}.
$$

We mention some related results here. In \cite{LL141} and \cite{LL14}, Le Gall and Lin establish the asymptotic of the range of a critical branching random walk conditioned on total size being $n$. In the supercritical dimensions, the range divided by $n$ converges in probability to a constant which they interprets as some escape probability. This constant is just $\BCap(\{0\})$ in our notation. For the range in the critical dimension, one can see \cite{LL141,Z163}.

We also construct the following bounds for the visiting probability by critical branching random walk when the distance $\dist(x,A)$ between $x$ and $A$ is not too small, compared with the diameter of $\diam(A)$.
\begin{theorem}\label{bd-finite}
For any finite $A\subseteq\Z^d$ and $x\in \Z^d$ with $\dist(x, A)\geq 0.1 \diam(A)$, we have:
\begin{equation}\label{bd-p}
P(\Snake_x \text{ visits } A)\asymp \frac{\BCap(A)}{(\dist(x,A))^{d-2}},
\end{equation}
where $f(x,A)\asymp g(x,A)$ indicates that there exists positive constants $c_1,c_2$ independent of $x,A$ such that $c_1f(x,A)\leq g(x,A)\leq c_2f(x,A)$.
\end{theorem}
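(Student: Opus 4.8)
The plan is to reduce \eqref{bd-p} to \emph{uniform} versions of the estimates underlying Theorem~\ref{MT1}. The elementary first observation is that, under the hypothesis $\dist(x,A)\ge 0.1\,\diam(A)$, every $a\in A$ satisfies $\|x-a\|\asymp\dist(x,A)$ with absolute comparison constants (on the one hand $\|x-a\|\gtrsim\dist(x,A)$ trivially, on the other $\|x-a\|\lesssim\dist(x,A)+\diam(A)\le 11\,\dist(x,A)$, using throughout that $\|\cdot\|$ and $\dist$ are comparable). Consequently it suffices to show
$$
P(\Snake_x\text{ visits }A)\asymp \dist(x,A)^{2-d}\,\BCap(A)
$$
with $\asymp$-constants independent of $A$ and $x$, and for this I would establish two-sided bounds $P(\Snake_x\text{ visits }A)\asymp\sum_{a\in A}\mathbf h(x,a)\,\Esc_A(a)$, \emph{uniformly} in $A$, where $\mathbf h$ is the appropriate two-point function (see below) satisfying $\mathbf h(x,a)\asymp\|x-a\|^{2-d}$ uniformly; one then pulls the factor $\mathbf h(x,a)\asymp\dist(x,A)^{2-d}$ out of the sum and uses $\sum_{a\in A}\Esc_A(a)=\BCap(A)$.

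The case $A=\{y\}$ is both an instance of the theorem and the basic input for the general case, so I would do it first. Let $N_y$ be the number of particles of $\Snake_x$ ever located at $y$. Since $\mu$ has mean one, $\E N_y=G(x,y):=\sum_{n\ge 0}\theta^{*n}(y-x)$, the Green's function of the underlying random walk, and $G(x,y)\asymp\|x-y\|^{2-d}$ for $d\ge 3$ by the local central limit theorem for the mean-zero walk $\theta$ (using the tail bound \eqref{as1}). Hence $P(\Snake_x\text{ visits }\{y\})=P(N_y\ge 1)\le\E N_y\asymp\|x-y\|^{2-d}$. For the matching lower bound I would use the second-moment inequality $P(N_y\ge1)\ge(\E N_y)^2/\E N_y^2$: decomposing an ordered pair of particles at $y$ according to the position $z$ of their most recent common ancestor and using the branching property, one obtains
$$
\E N_y^2\le G(x,y)+\sigma^2\sum_{z\in\Z^d}G(x,z)\,G(z,y)^2,
$$
and for $d\ge5$ the right-hand side is $\le C\,G(x,y)$ by a standard convolution estimate (the exponent $2(d-2)$ of $\|z-y\|$ in $G(z,y)^2$ exceeds $d$, so $\sum_z\|z-y\|^{-2(d-2)}<\infty$). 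Thus $P(\Snake_x\text{ visits }\{y\})\asymp\|x-y\|^{2-d}$, with constants depending only on $d,\mu,\theta$.

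For general finite $A$ the strategy mirrors the proof of \eqref{m1-1}, which is itself the branching counterpart of the classical last-exit decomposition $P(S_x\text{ visits }A)=\sum_{a\in A}G(x,a)\,\es_A(a)$ for random walk --- there the desired bound is immediate, since $G(x,a)\asymp\dist(x,A)^{2-d}$ uniformly in $a\in A$ and $\sum_a\es_A(a)=\Capa(A)$. For branching random walk one decomposes $\Snake_x$ at the last particle, in Depth-First order, lying in $A$, say at position $a$; cutting along its ancestral line and exploiting the independence of the subtrees hanging off that line, the total contribution sums over $a$ to $\asymp\sum_{a\in A}\mathbf h(x,a)\,\Esc_A(a)$, where $\mathbf h(x,a)$ is the relevant two-point function of the (infinite, backward) branching random walk and obeys $\mathbf h(x,a)\asymp\|x-a\|^{2-d}$ uniformly by the same first/second-moment argument as above. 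The possibility that several ancestral lines of $\Snake_x$ enter $A$ is absorbed, for the upper bound, using the subadditivity $\BCap(A\cup B)\le\BCap(A)+\BCap(B)$ together with the single-point bound; for the lower bound one restricts to configurations with a unique entering point. Since $\|x-a\|\asymp\dist(x,A)$ for all $a\in A$, one then factors out $\mathbf h(x,a)\asymp\dist(x,A)^{2-d}$ and is left with $\sum_{a\in A}\Esc_A(a)=\BCap(A)$, yielding \eqref{bd-p}.

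The main obstacle is precisely this uniformity in $A$. In Theorem~\ref{MT1} one may fix $A$ and send $x\to\infty$, so every error term in the last-exit decomposition --- multiple entrances, the discrepancy between finite-snake escape probabilities and their infinite-version limits $\Esc_A$, $\EsC_A$, the replacement of $\mathbf h(x,a)$ by $\|x-a\|^{2-d}$ --- is negligible in the limit. Here the decomposition must be run from distances as small as $0.1\,\diam(A)$, and each of these terms has to be controlled by the main term with an $A$-independent constant. The essential quantitative ingredients are: uniform comparability of $\EsC_A$, $\Esc_A$ and their finite-snake approximations; the single-point estimate of the second paragraph, which bounds the multiple-entrance contribution via subadditivity of $\BCap$; and the fact that $d\ge5$ makes all the relevant lattice sums (over branch points, over pairs of branch points) convergent, so that the branching corrections are genuinely lower order, uniformly in $A$. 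Granting these, both inequalities in \eqref{bd-p} follow.
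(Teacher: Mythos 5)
Your overall architecture is sound and matches the paper's in spirit (a last-exit decomposition producing $\sum_{a\in A}\mathbf h(x,a)\cdot(\text{escape probability at }a)$, with $\mathbf h(x,a)\asymp\|x-a\|^{2-d}$ pulled out of the sum), and your treatment of the single-point case by the first/second-moment method is correct for $d\ge 5$. But the proof has a genuine gap: the two uniform estimates on which everything rests are exactly the content of the theorem, and you list them as ingredients to be ``granted'' rather than proving them. Concretely: (1) the lower bound $\mathbf h(x,a)\succeq\|x-a\|^{2-d}$ does \emph{not} follow ``by the same first/second-moment argument as above.'' The second-moment method lower-bounds the probability that the branching random walk visits a single point; what you need here is a lower bound on the Green function of the random walk killed at rate $\rS_A$ (Proposition \ref{key-1}), i.e.\ you must show that the killing accumulated along typical paths from $x$ down to $a\in A$ costs only a constant factor, uniformly in $A$. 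This is the paper's Lemma \ref{bd_G-0} and Lemma \ref{bd_Green}, which require Lemma \ref{pro-g1} (paths confined to a tube of length $O(n^2)$), the bound $\rS_A(z)\preceq\dist(z,A)^{-2}$ from Proposition \ref{1-d}, and, to pass between scales, the Overshoot Lemma together with a uniform escape estimate. (2) Identifying the ``near-$A$'' factor of the decomposition with $\Esc_A(a)$ (or $\EsC_A(a)$) up to uniform constants requires a uniform lower bound on the escape probability $\EsC_A(z)$ for $z$ at distance $\asymp\Rad(A)$ from $A$; this is the paper's Lemma \ref{lm-escape}, proved by a separate averaging argument over a shell combined with the First-Visit Lemma, and nothing in your sketch substitutes for it.

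Two smaller points. The appeal to subadditivity of $\BCap$ to ``absorb'' multiple entrances is a non sequitur: the last-exit decomposition (the paper's \eqref{last}, or equivalently the first-visit identity \eqref{p2}) is already an exact partition of the event, so there is no overcounting to absorb; the actual difficulty is that the exact weights involve the killing functions $\rS_A$, $\RS_A$ and must be compared to $g(x,a)\,\Esc_A(a)$ with $A$-independent constants, which is points (1)--(2) above. Likewise, ``restricting to configurations with a unique entering point'' for the lower bound is not a workable step as stated; the paper instead keeps the exact decomposition through a sphere $\Ball(2\Rad(A))$ and shows the error terms (long jumps over the intermediate shell) are at most $\tfrac12\BCap(A)$ via the Overshoot Lemma and the tail assumption \eqref{as1} on $\theta$.
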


One might compare this with the corresponding result for random walk:
\begin{equation}\label{SRW-visiting}
P(S_x  \text{ hits }A)\asymp \frac{\Capa(A)}{(\dist(x,A))^{d-2}}.
\end{equation}

Similarly to random walk, computing escape probabilities can be very difficult. Hence it might not be practical to estimate the branching capacity directly from the definition. However we can use \eqref{m1-1} in reverse: by estimating the probability of visiting a set, we can give bounds for the branching capacity of that set. We do so in Section 10 and find the order of the magnitude of the branching capacity of low dimensional balls:
\begin{theorem}\label{MT2}
Let $B^m(r)$ be the $m$-dimensional balls with radius $r$ (as a subset of $\Z^d$), i.e. $\{z=(z_1,0)\in\Z^m\times\Z^{d-m}=\Z^d: |z_1|\leq r\}$. For any $r>2$, we have:
\begin{equation}
\BCap(B^m(r))\asymp \left\{
\begin{array}{ccc}
r^{d-4}&\text{ if }& m\geq d-3;\\
r^{d-4}/\log r &\text{ if }& m=d-4;\\
r^{m} &\text{ if }& m\leq d-5.\\
\end{array}
\right.
\end{equation}
\end{theorem}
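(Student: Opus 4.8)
\emph{Strategy and the reduction.} By Theorem~\ref{bd-finite}, for any $x$ with $\dist(x,B^m(r))\asymp r$ we have $\BCap(B^m(r))\asymp r^{d-2}\,P(\Snake_x\text{ visits }B^m(r))$; concretely, for $m<d$ one centres the ball at the origin and takes $x$ with $|x|=2r$, so that $\dist(x,B^m(r))\in[r,3r]$. The theorem is therefore equivalent to showing that $P(\Snake_x\text{ visits }B^m(r))$ has order $r^{-2}$ when $m\ge d-3$, order $r^{-2}/\log r$ when $m=d-4$, and order $r^{m-d+2}$ when $m\le d-5$. Two consequences of the criticality of $\mu$ will be used throughout: $\E[\#\{\text{particles of }\Snake_x\text{ ever at }y\}]=G(x,y)$ and $\E[Z_n(y)]=p_n(x,y)$, where $G$ and $p_n$ are the Green function and $n$-step transition kernel of the walk with step law $\theta$ and $Z_n(y)$ counts the generation-$n$ particles at $y$. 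From the first, Markov's inequality gives $P(\Snake_x\text{ visits }K)\le\sum_{z\in K}G(x,z)$, and letting $x\to\infty$ with $\|x\|^{d-2}G(x,z)\to a_d$ and Theorem~\ref{MT1} gives the general bound $\BCap(K)\le|K|$; for $K=B^m(r)$ this is $\asymp r^m$, which is exactly the claimed order in the regime $m\le d-5$ (the only one in which the first moment is sharp).

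\emph{Lower bounds, all $m$.} By monotonicity of $\BCap$ (itself a consequence of Theorem~\ref{MT1} and the monotonicity of visiting probabilities) and the inclusion $B^{d-3}(r)\subseteq B^m(r)$, it suffices to treat $B^{m'}(r)$ with $m'=\min(m,d-3)$. Apply the Paley--Zygmund inequality to $N=\sum_nZ_n(B^{m'}(r))$: one has $\E N=\sum_{z\in B^{m'}(r)}G(x,z)\asymp r^{\,m'-d+2}$, and, decomposing a pair of particles according to their most recent common ancestor, $\E[N^2]\lesssim\E N+\sum_{z\in B^{m'}(r)}G(x,z)\,g(z)+\sigma^2\sum_wG(x,w)\,g(w)^2$, where $g(w)=\sum_{z\in B^{m'}(r)}G(w,z)\asymp(\dist(w,B^{m'}(r))\vee1)^{m'-d+2}$ near the ball (note $m'-d+2\le-1$). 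Summing over dyadic shells $\dist(w,B^{m'}(r))\asymp t$ one finds $\E[N^2]\asymp r^{\,m'-d+2}$ if $m'\le d-5$, $\asymp r^{-2}\log r$ if $m'=d-4$ (the logarithm is the harmonic sum $\sum_{t=1}^{r}t^{-1}$), and $\asymp1$ if $m'=d-3$; in every case $(\E N)^2$ is negligible, so $P(N>0)\ge(\E N)^2/\E[N^2]$ yields the three asserted orders after multiplication by $r^{d-2}$. Assumption \eqref{as1} is used here only to discard the negligible effect of long jumps on $G$ and $p_n$.

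\emph{Upper bounds for $m\ge d-4$.} By monotonicity it suffices to bound $\BCap(B^d(r))$, i.e.\ to show $v(x):=P(\Snake_x\text{ visits }B^d(r))\lesssim r^{-2}$ at $|x|=2r$. Conditioning on the first generation, $v\equiv1$ on $B^d(r)$ and
$$v(x)=F\big((\theta * v)(x)\big),\qquad x\notin B^d(r),\qquad(\theta * v)(x)=\sum_z\theta(z)\,v(x+z),$$
where $F(s)=1-f(1-s)$, $f$ the offspring generating function; since $f(1)=f'(1)=1$ and $f''(1)=\sigma^2$, the map $F$ is concave with $F(s)=s-\tfrac{\sigma^2}{2}s^2+O(s^3)$ and $F(s)\le s$. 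Hence if $\bar v\ge1$ on $B^d(r)$ and $\bar v\ge F(\theta * \bar v)$ off $B^d(r)$, the monotone iteration $v^{(0)}=\mathbf 1_{B^d(r)}$, $v^{(k+1)}=\mathbf 1_{B^d(r)}+\mathbf 1_{(B^d(r))^c}F(\theta * v^{(k)})\uparrow v$ stays below $\bar v$, so $v\le\bar v$. One constructs such a $\bar v$ guided by the continuum equation $\Delta v\asymp v^2$ satisfied by $v$ off the ball: near $\partial B^d(r)$, a slightly enlarged multiple of $((|x|-r)_+^2+C)^{-1}$ (the exact profile of $v''=v^2$, the enlargement absorbing discretisation and cubic errors), which drops from order $1$ at the boundary to order $r^{-2}$ at distance $\asymp r$; far away, a multiple of $|x|^{-(d-2)}$ plus a lower-order superharmonic correction, which is integrable against $|x|^{d-1}$ precisely because $d\ge5$. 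Verifying $\bar v\ge F(\theta * \bar v)$ is a computation, in which \eqref{as1} re-enters to guarantee that jumps landing inside $B^d(r)$ contribute to $\theta * \bar v$ less than the superharmonic margin of $\bar v$; this gives $v(x)\le\bar v(x)\asymp r^{-2}$, hence $\BCap(B^m(r))\le\BCap(B^d(r))\lesssim r^{d-4}$ for all $m\ge d-3$. For $m=d-4$ the same scheme is applied to $v=P(\Snake_\cdot\text{ visits }B^{d-4}(r))$, but the four transverse directions now put the governing semilinear equation at its critical exponent ($\int^\infty(\rho^{-2})^2\rho^3\,d\rho$ diverges logarithmically), so the tail of the correct supersolution carries an extra factor $(\log(|x|/r))^{-1}$ and has order $r^{-2}/\log r$ at transverse distance $\asymp r$; this borderline construction is of exactly the nature of the critical-dimension analysis of the companion paper \cite{Z163}, and it yields $\BCap(B^{d-4}(r))\lesssim r^{d-4}/\log r$.

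\emph{Main difficulty.} The reduction, the $m\le d-5$ upper bound, and all the lower bounds are routine once Theorem~\ref{bd-finite} is available. The real obstacle is the upper bound for $m\ge d-4$: the first moment overshoots by a factor of order $r^{m-d+4}$ (two or more powers of $r$ as soon as $m\ge d-3$), and conditioning on a visit helps little, since once $\Snake_x$ reaches $B^d(r)$ it typically occupies it of order $r^4$ times; one is genuinely forced into the nonlinear comparison argument, whose two delicate points are the verification of the supersolution inequality (including the control of long jumps via \eqref{as1}) and, at the borderline dimension $m=d-4$, the logarithmically sharp construction that parallels the $d=4$ theory.
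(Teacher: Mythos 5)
Your overall architecture is sound and, for most of the theorem, coincides with the paper's: the paper also proves Theorem~\ref{MT2} by estimating $P(\Snake_x\text{ visits }B^m(r))$ (Theorem~\ref{lem-visiting-finite}), gets all lower bounds from the second–moment inequality $P(N>0)\geq(\E N)^2/\E N^2$ with $\E N^2\preceq\sum_z g(x,z)g^2(z,A)$ (yielding exactly your three orders $r^{m'-d+2}$, $r^{-2}\log r$, $1$ at $s\asymp r$), and gets the $m\leq d-5$ upper bound from the first moment. One structural difference: the paper converts visiting probabilities into capacity by sending $x\to\infty$ and invoking \eqref{m1-1}, which forces it to include a separate ``from small $s$ to big $s$'' step; you instead work at $\dist(x,A)\asymp r$ and invoke Theorem~\ref{bd-finite}. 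That shortcut is legitimate and not circular, but only because you establish the visiting-probability estimates independently — in the paper Theorem~\ref{bd-finite} is itself downstream of Theorem~\ref{lem-visiting-finite} (via Lemma~\ref{pro-inf-ball}), so this dependence should be made explicit.

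The real divergence, and the place where your argument is not yet a proof, is the upper bound for $m\geq d-4$. The paper does not build a $d$-dimensional supersolution around the ball: since $B^m(r)$ is convex, it covers the target by at most $d$ half-spaces at distance $\asymp s$ and reduces to a \emph{one-dimensional} branching random walk visiting a half-line, where the comparison-principle argument with the explicit supersolution $1\wedge(c/x^2)$ is carried out in full (Proposition~\ref{1-d}), including the control of unbounded jumps via the tail condition inherited from \eqref{as1}. Your direct construction of $\bar v$ with a boundary layer $((|x|-r)_+^2+C)^{-1}$ matched to a far-field $|x|^{2-d}$ is plausible, but the verification of $\bar v\geq F(\theta*\bar v)$ across the matching region, with curvature terms and long jumps into $B^d(r)$, is precisely the hard part and is asserted rather than done; the 1-d reduction exists exactly to make that verification tractable. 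For $m=d-4$ the gap is more serious: a log-corrected supersolution at the critical exponent is a genuinely delicate object (this is the content of the $\Z^4$ point-visiting bound of \cite{Z15}, which the paper adapts with specified modifications), and your proposal only names the expected profile $r^{-2}/\log r$ without constructing or verifying it. As written, the cases $m\geq d-3$ and especially $m=d-4$ of the upper bound are programs rather than proofs; adopting the paper's half-space reduction would close the first, and the second requires importing the argument of \cite{Z15}.
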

One might compare this with the corresponding result about regular capacity:
\begin{equation}\label{capa-B}
\Capa(B^m(r))\asymp \left\{
\begin{array}{ccc}
r^{d-2}&\text{ if }& m\geq d-1;\\
r^{d-2}/\log r &\text{ if }& m=d-2;\\
r^{m} &\text{ if }& m\leq d-3.\\
\end{array}
\right.
\end{equation}

Our definition of branching capacity depends on the offspring distribution $\mu$ and jump distribution $\theta$. From the previous result, one can see that branching capacities of a ball for different $\mu$'s and $\theta$'s are comparable. We prove the following:
\begin{theorem}\label{MT5}
Suppose that $\mu_1,\mu_2$ are two nondegenerate critical offspring distributions with finite second moment and write $\BCap_{\mu_1,\theta}$ and $\BCap_{\mu_2,\theta}$ for the corresponding branching capacities (with the same jump distribution $\theta$). Then, there is a $C=C(\mu_1,\mu_2)>0$ (which is even independent of $\theta$) such that for every finite subset $A\subseteq\Z^d$,
\begin{equation}\label{MT5-e}
C^{-1}\cdot\BCap _{\mu_1,\theta}(A)\leq\BCap_{\mu_2,\theta}(A)\leq C\cdot\BCap_{\mu_1,\theta}(A).
\end{equation}
\end{theorem}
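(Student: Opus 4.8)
The plan is to reduce Theorem~\ref{MT5} to a comparison of visiting probabilities and then carry out that comparison through the nonlinear difference equation satisfied by the visiting probability; the virtue of this route is that the offspring law enters only through a one--variable convexity datum, while the jump law enters only through a monotone averaging operator, so the constant it produces is automatically independent of $\theta$. \emph{Reduction.} For a finite nonempty $A\subseteq\Z^d$ let $w_i^A(x)=P(\Snake_x\text{ visits }A)$ for the branching random walk with offspring law $\mu_i$ and jump law $\theta$ ($i=1,2$); the case $A=\emptyset$ is trivial. By Theorem~\ref{MT1}, $a_d\BCap_{\mu_i,\theta}(A)=\lim_{x\to\infty}\|x\|^{d-2}w_i^A(x)$, and since $a_d$ and $\|\cdot\|$ depend only on $d$ and the covariance matrix of $\theta$, which is common to the two models, it suffices to produce $C=C(\mu_1,\mu_2)\ge1$, independent of $\theta$, $A$ and $x$, with $C^{-1}w_2^A(x)\le w_1^A(x)\le C\,w_2^A(x)$ for all $x\notin A$; dividing the two limit identities then gives \eqref{MT5-e}.

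\emph{The nonlinear equation.} Let $g_i$ be the generating function of $\mu_i$ and put $h_i(s)=1-g_i(1-s)$ for $s\in[0,1]$. Then $h_i$ is increasing and concave with $h_i(0)=0$, $h_i'(0)=1$ and $h_i''(0)=-\sigma_i^2$; criticality gives $h_i(s)\le s$, and nondegeneracy of $\mu_i$ (which, with mean one, forces $\mu_i(0)>0$ and $\mu_i$ not supported on $\{0,1\}$) gives $h_i(1)=1-\mu_i(0)<1$ and $g_i''>0$ on $(0,1]$. Writing $L_\theta f(x)=\sum_{y}\theta(y-x)f(y)$ for the averaging operator of the $\theta$--walk, conditioning on the first generation shows that $w_i^A$ solves, in $[0,1]$,
\[
w=\mathbf 1_A+\mathbf 1_{A^c}\cdot h_i(L_\theta w),
\]
and iterating the monotone operator $w\mapsto\mathbf 1_A+\mathbf 1_{A^c}h_i(L_\theta w)$ from $\mathbf 1_A$ yields an increasing sequence converging to $w_i^A$; hence $w_i^A$ is the minimal solution, so any supersolution of the $\mu_i$--equation dominates $w_i^A$.

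\emph{Comparing the nonlinearities and concluding.} The crux is the elementary inequality: there is $C=C(\mu_1,\mu_2)\ge1$ with $h_2(u)\le C\,h_1(u/C)$ for all $u\in[0,1]$. Indeed, with $\psi_i(s):=s-h_i(s)=g_i(1-s)-(1-s)\in\big[\tfrac12 g_i''(1-s)s^2,\ \tfrac{\sigma_i^2}{2}s^2\big]$ (Taylor with Lagrange remainder, using that $g_i''$ is increasing), one gets $C h_1(u/C)-h_2(u)=\psi_2(u)-C\psi_1(u/C)\ge\tfrac{u^2}{2}\big(g_2''(1-u)-\tfrac{\sigma_1^2}{C}\big)$, which is nonnegative as soon as $C\ge\sigma_1^2/g_2''(1-u)$; since $g_2''$ is increasing and positive on $(0,1]$ this settles $u\in[0,1-\delta]$ for a suitable $\delta=\delta(\mu_2)>0$, while for $u\in[1-\delta,1]$ one uses $h_2(u)\le h_2(1)<1$ together with $C h_1\big((1-\delta)/C\big)\to 1-\delta$ as $C\to\infty$, first fixing $\delta$ in terms of $\mu_2(0)$ and then taking $C$ large. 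Granting this, set $v:=1\wedge(C\,w_1^A)$. On $A$, $v=1$ (since $C\ge1$); on $A^c$, using $L_\theta v\le 1\wedge(C\,L_\theta w_1^A)$, monotonicity of $h_2$, the displayed inequality at $u=1\wedge(C\,L_\theta w_1^A)$, the identity $w_1^A=h_1(L_\theta w_1^A)$, and $h_1(s)\le s$,
\[
h_2(L_\theta v)\le h_2\big(1\wedge C L_\theta w_1^A\big)\le C h_1\big((1/C)\wedge L_\theta w_1^A\big)\le 1\wedge\big(C h_1(L_\theta w_1^A)\big)=v ,
\]
so $v$ is a supersolution of the $\mu_2$--equation and $w_2^A\le v\le C\,w_1^A$. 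Exchanging $\mu_1$ and $\mu_2$ gives the reverse bound, and the Reduction step then yields \eqref{MT5-e}. Note that $L_\theta$ entered only through monotonicity and $L_\theta\mathbf 1=\mathbf 1$, so no constant depends on $\theta$.

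\textbf{Main obstacle.} The genuine content is the analytic inequality for $h_1,h_2$ and the verification that $1\wedge(C\,w_1^A)$ is a bona fide supersolution: one must handle the ``free boundary'' where $C\,w_1^A$ crosses $1$ (this is why the truncation $1\wedge\cdot$ is needed and, crucially, why nondegeneracy — $\mu_i(0)>0$, i.e.\ $h_i(1)<1$ — is used), and one must make sure that the threshold $\delta$ separating the two regimes, hence the constant $C$, depends only on $\mu_1$ and $\mu_2$. Everything else (deriving the recursion for $w_i^A$, its minimality, the monotone comparison principle, and the passage to the limit via Theorem~\ref{MT1}) is routine.
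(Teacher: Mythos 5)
Your proposal is correct and follows essentially the same route as the paper: reduce to comparing visiting probabilities via Theorem \ref{MT1}, establish the one-variable inequality $h_2(u)\le C\,h_1(u/C)$ (the paper's Lemma \ref{f1-f2} in the form $f_1((Ct)\wedge 1)\le (Cf_2(t))\wedge 1$), and transfer it through the monotone recursion for the visiting probability. Your supersolution $1\wedge(C\,w_1^A)$ is just a repackaging of the paper's induction $u_{1,n}\le C\,u_{2,n}$, and your Taylor--Lagrange proof of the key inequality is a minor variant of the paper's second-derivative argument.
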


We believe the following is also true but cannot show it:
\begin{equation}
C^{-1}\cdot\BCap_{\mu,\theta_1}(A)\leq\BCap_{\mu,\theta_2}(A)\leq C\cdot\BCap_{\mu,\theta_1}(A),\quad
\text{for any finite }A\subset \Z^d,
\end{equation}
where $C=C(\mu,\theta_1,\theta_2)$ is independent of $A$.

In \cite{Z162}, we introduce branching recurrence and branching transience and prove an analogous version of Wiener's Test. Hence from Theorem \ref{MT5} one can see that whether a set is branching recurrent or branching transient is somehow independent of the choice of offspring distribution. For more details, see \cite{Z162}.

\section{Preliminaries}
We begin with some notations. For a set $K\subseteq\Z^d$, we write $|K|$ for its cardinality. We write $K\ssubset\Z^d$ to express that $K$ is a finite nonempty subset of $\Z^d$. For $x\in\Z^d$ (or $\R^d$), we denote by $|x|$ the Euclidean norm of $x$. We will mainly use the norm $\|\cdot\|$ corresponding the jump distribution $\theta$, i.e. $\|x\|=\sqrt{x\cdot Q^{-1}x}/\sqrt{d}$, where $Q$ is the covariance matrix of $\theta$. For convenience, we set $|0|=\|0\|=0.5$. We denote by $\diam(K)=\sup\{\| a-b\|:a, b\in K\}$, the diameter of $K$ and by $\Rad(K)=\sup\{\| a\| : a\in K\}$, the radius of $K$ respect to $0$. We write $\Ball(r)$ for the ball $\{z\in\Z^d: \| z\|\leq r\}$ and $\BallE(r)$ for the Euclidean ball $\{z\in\Z^d: |z|\leq r\}$. For any subsets $A,B$ of $\Z^d$, we denote by $\dist(A,B)=\inf\{ \| x-y \|: x\in A, y\in B\}$ the distance between $A$ and $B$. When $A=\{x\}$ consists of just one point, we just write $\dist(x,B)$ instead.
For any path $\gamma:\{0,\dots,k\}\rightarrow \Z^d$, we let $|\gamma|$ stand for $k$, the length, i.e. the number of edges of $\gamma$ , $\widehat{\gamma}$ for $\gamma(k)$, the endpoint of $\gamma$ and $[\gamma]$ for $k+1$, the number of vertices of $\gamma$. Sometimes we just use a sequence of vertices to express a path. For example, we may write $(\gamma(0),\gamma(1),\dots, \gamma(k))$ for the path $\gamma$. For any $B\subset \Z^d$, we write $\gamma\subseteq B$ to express that all vertices of $\gamma$ except the starting point and the endpoint, lie inside $B$, i.e. $\gamma(i)\in B$ for any $1\leq i\leq |\gamma|-1$. If the endpoint of a path $\gamma_1:\{0,\dots,|\gamma_1|\}\rightarrow \Z^d$ coincides with the starting point of another path $\gamma_2:\{0,\dots,|\gamma_2|\}\rightarrow \Z^d$, then we can define the composite of $\gamma_1$ and $\gamma_2$ by concatenating $\gamma_1$ and $\gamma_2$:
$$
\gamma_1\circ\gamma_2:\{0,\dots,|\gamma_1|+|\gamma_2|\}\rightarrow \Z^d,
$$
$$
\gamma_1\circ\gamma_2(i)=\left\{
\begin{array}{ll}
\gamma_1(i),&\text{ for } i\leq|\gamma_1|;\\
\gamma_2(i-|\gamma_1|),&\text{ for } i\geq|\gamma_1|.\\
\end{array}
\right.
$$

We now state our convention regarding constants. Throughout the text (unless otherwise specified), we use $C$ and $c$ to denote positive constants depending only on dimension $d$, the critical distribution $\mu$ and the jump distribution $\theta$, which may change from place to place. Dependence of constants on additional parameters will be made or stated explicit. For example, $C(\lambda)$ stands for a positive constant depending on $d,\mu,\theta, \lambda$. For functions $f(x)$ and $g(x)$, we write $f\sim g$ if $\lim_{x\rightarrow \infty}(f(x)/g(x))=1$. We write $f\preceq g$, respectively $f\succeq g$, if there exist constants $C$ such that, $f\leq Cg$, respectively $f\geq Cg$. We use $f\asymp g$ to express that $f\preceq g$ and $f\succeq g$. We write $f\ll g$ if $\lim_{x\rightarrow \infty}(f(x)/g(x))=0$.

\subsection{Finite and infinite trees.} We are interested in rooted ordered trees (plane trees), in particular, Galton-Watson (GW) trees and its companions. Recall that $\mu=(\mu(i))_{i\in\N}$ is a given critical distribution with finite variance $\sigma^2>0$. Note that we exclude the trivial case that $\mu(1)=1$. Throughout this paper, $\mu$ will be fixed. Define another probability measure $\widetilde{\mu}$ on $\N$, call the \textbf{adjoint measure} of $\mu$ by setting $\widetilde{\mu}(i)=\sum_{j=i+1}^\infty \mu(j)$. Since $\mu$ has mean $1$, $\widetilde{\mu}$ is indeed a probability measure. The mean of $\widetilde{\mu}$ is $\sigma^2/2$. A Galton-Watson process with distribution $\mu$ is a process starting with one initial particle, with each particle having independently a random number of children due to $\mu$. The Galton-Watson tree is just the family tree of the Galton-Watson process, rooted at the initial particle. We simply write \text{$\mu$-GW tree} for the Galton-Watson tree with offspring distribution $\mu$.  If we just change the law of the number of children for the root, using $\widetilde{\mu}$ instead of $\mu$ (for other particles still use $\mu$), the new tree is called an \textbf{adjoint $\mu$-GW tree}. The \textbf{infinite $\mu$-GW tree} is constructed in the following way: start with a semi-infinite line of vertices, called the spine, and graft to the left of each vertex in the spine an independent adjoint $\mu$-GW tree, called a bush. The infinite $\mu$-GW tree is rooted at the first vertex of the spine. Here the left means that we assume every vertex in spine except the root is the youngest child (the latest in the Depth-First search order) of its parent. The \textbf{invariant $\mu$-GW tree} is defined similarly to the infinite $\mu$-GW tree except that we graft to the root (the first vertex of the spine) an independent $\mu$-GW tree instead of an independent adjoint $\mu$-GW tree. The reason for the introduction of these companion trees will be clear in Section 5. Each tree is ordered using the classical order according to Depth-First search starting from the root.

\begin{remark}
The infinite $\mu$-GW tree is somehow one half of the GW tree conditioned on survival in the following sense. The GW tree conditioned on survival has a unique infinite simple path starting from the root. It turns out that the subtree generated by the vertices in the spine and all those vertices on the left of the spine is equivalent to our infinite $\mu$-GW tree. For more details about the GW tree conditioned on survival, one can see e.g. \cite{A91}. The invariant $\mu$-GW tree appears in \cite{LL141}, and admits an invariant shift.
\end{remark}

\subsection{Tree-indexed random walk.} Now we introduce the random walk in $\Z^d$ with jump distribution $\theta$, indexed by a random plane tree $T$. First choose some $a\in\Z^d$ as the starting point. Conditionally on $T$ we assign independently to each edge of $T$ a random variable in $\Z^d$ according to $\theta$. Then we can uniquely define a function $\mathcal{S}_T: T\rightarrow \Z^d$, such that, for every vertex $v \in T$ (we also use $T$ for the set of all vertices of the tree $T$), $\mathcal{S}_T(v)-a$ is the sum of the variables of all edges belonging to the unique simple path from the root $o$ to the vertex $u$ (hence $\mathcal{S}_T(o)=a$). A plane tree $T$ together with this random function $\mathcal{S}_T$ is called $T$-indexed random walk starting from $a$. When $T$ is a $\mu$-GW tree, an adjoint $\mu$-GW tree, and an infinite $\mu$-GW tree respectively, we simply call the tree-indexed random walk a \textbf{snake}, an \textbf{adjoint snake} and an \textbf{infinite snake} respectively. We write $\Snake_x$, $\Snake'_x$ and $\Snake^{\infty}_x$ for a snake, an adjoint snake, and an infinite snake, respectively, starting from $x\in \Z^d$. Note that a snake is just the branching random walk with offspring distribution $\mu$ and jump distribution $\theta$. We also need to introduce the \textbf{reversed infinite snake} starting from $x$, $\Snake^{-}_x$, which is constructed in the same way as $\Snake^{\infty}_x$ except that the variables assigned to the edges in the spine are now due to not $\theta$ but the reverse distribution $\Rtheta$ of $\theta$ (i.e. $\Rtheta(x)\DeFine\theta(-x)$ for $x\in \Z^d$) and similarly the \textbf{invariant snake} starting from $x$, $\Snake^{I}_x$, which is constructed by using the invariant $\mu$-GW tree as the random tree $T$ and using $\Rtheta$ for all edges of the spine of $T$ and $\theta$ for all other edges. For an infinite snake (or reversed infinite snake, invariant snake), the random walk indexed by its spine, called its backbone, is just a random walk with jump distribution $\theta$ (or $\Rtheta$).  Note that all snakes here certainly depend on $\mu$ and $\theta$. Since $\mu$ and $\theta$ are fixed throughout this work, we omit their dependence in the notation.


\subsection{Random walk with killing.} We will use the tools of random walk with killing. Suppose that when the random walk is currently at position $x\in\Z^d$, then it is killed, i.e. jumps to a 'cemetery' state $\delta$, with probability $\KRW(x)$, where $\KRW:\Z^d\rightarrow [0,1]$ is a given function. In other words, the random walk with killing rate $\KRW(x)$ (and jump distribution $\theta$) is a Markov chain $\{X_n:n\geq0\}$ on $\Z^d\cup\{\delta\}$ with transition probabilities $p(\cdot,\cdot)$ given by: for $x,y\in\Z^d$,
\begin{equation*}
p(x,\delta)=\KRW(x), \quad p(\delta, \delta)=1, \quad p(x,y)=(1-\KRW(x))\theta(y-x).
\end{equation*}
For any path $\gamma:\{0,\dots, n\}\rightarrow \Z^d$ with length $n$, its probability weight $\BRW(\gamma)$ is defined to be the probability that the path consisting of the first $n$ steps for the random walk with killing starting from $\gamma(0)$ is $\gamma$. Equivalently,
\begin{equation}\label{def-b}
\BRW(\gamma)=\prod_{i=0}^{|\gamma|-1}(1-\KRW(\gamma(i)))\theta(\gamma(i+1)-\gamma(i))
=\SRW(\gamma)\prod_{i=0}^{|\gamma|-1}(1-\KRW(\gamma(i))),
\end{equation}
where $\SRW(\gamma)=\prod_{i=0}^{|\gamma|-1}\theta(\gamma(i+1)-\gamma(i))$ is the probability weight of $\gamma$ corresponding to the random walk with jump distribution $\theta$. Note that $\BRW(\gamma)$ depends on the killing. We delete this dependence on the notation for simplicity.

Now we can define the corresponding Green function for $x,y\in \Z^d$:
\begin{equation*}
G_\KRW(x,y)=\sum_{n=0}^{\infty}P(S^{\KRW}_x(n)=y)=\sum_{\gamma:x\rightarrow y}\BRW(\gamma).
\end{equation*}
where $S^{\KRW}_x=(S^{\KRW}_x(n))_{n\in \N}$ is the random walk (with jump distribution $\theta$) starting from $x$, with killing function $\KRW$, and the last sum is over all paths from $x$ to $y$. For $x\in \Z^d, A\subset \Z^d$, we write $G_\KRW(x,A)$ for $\sum_{y\in A}G_\KRW(x,y)$.

For any $B\subset \Z^d$ and $x, y\in \Z^d$, define the harmonic measure:
$$
\Hm^{B}_\KRW(x,y)= \sum_{\gamma:x\rightarrow y, \gamma\subseteq B}\BRW(\gamma).
$$

Note that when the killing function $\KRW\equiv 0$, the random walk with this killing is just random walk without killing and we write $\Hm^{B}(x,y)$ for this case.

We will repeatedly use the following First-Visit Lemma. The idea is to decompose a path according to the first or last visit of a set.
\begin{lemma} \label{bd-1}
For any $B\subseteq \Z^d$ and $a\in B, b\notin B$, we have:
$$
G_\KRW(a,b)=\sum_{z\in B^c}\Hm^{B}_\KRW(a,z)G_\KRW(z,b)=\sum_{z\in B}G_\KRW(a,z)\Hm^{B^c}_\KRW(z,b);
$$
$$
G_\KRW(b,a)=\sum_{z\in B}\Hm^{B^c}_\KRW(b,z)G_\KRW(z,a)=\sum_{z\in  B^c}G_\KRW(b,z)\Hm^{B}_\KRW(z,a).
$$
\end{lemma}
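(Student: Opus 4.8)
The plan is to derive all four identities from a single mechanism: a first-visit or last-visit path decomposition combined with the multiplicativity of the weight $\BRW$ under concatenation. First I would record the elementary fact that $\BRW(\gamma_1\circ\gamma_2)=\BRW(\gamma_1)\,\BRW(\gamma_2)$ whenever the concatenation is defined. This is immediate from the product formula \eqref{def-b}: splitting the product over $i$ at $i=|\gamma_1|$, the tail product is indexed by $i=|\gamma_1|,\dots,|\gamma_1|+|\gamma_2|-1$, and since $(\gamma_1\circ\gamma_2)(|\gamma_1|+j)=\gamma_2(j)$ the $i=|\gamma_1|$ factor is exactly $(1-\KRW(\gamma_2(0)))\theta(\gamma_2(1)-\gamma_2(0))$, so the tail product equals $\BRW(\gamma_2)$. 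Since every weight $\BRW(\gamma)$ is nonnegative, all the (possibly infinite) sums below converge in $[0,\infty]$ and may be rearranged freely by Tonelli's theorem, so there is no integrability concern even when a Green function is infinite.

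For the first identity I would decompose an arbitrary path $\gamma:a\to b$ at its first exit from $B$. Because $\gamma(0)=a\in B$ and $\widehat\gamma=b\notin B$, there is a unique smallest index $k\ge1$ with $\gamma(k)\notin B$; set $z=\gamma(k)\in B^c$ and write $\gamma=\gamma_1\circ\gamma_2$ with $\gamma_1=(\gamma(0),\dots,\gamma(k))$ and $\gamma_2=(\gamma(k),\dots,\gamma(|\gamma|))$. By minimality of $k$ the intermediate vertices of $\gamma_1$ lie in $B$, i.e. $\gamma_1\subseteq B$, while $\gamma_2$ is an unrestricted path from $z$ to $b$. Conversely, any triple $(z,\gamma_1,\gamma_2)$ with $z\in B^c$, $\gamma_1:a\to z$, $\gamma_1\subseteq B$, and $\gamma_2:z\to b$ arbitrary yields by concatenation a path from $a$ to $b$ whose first exit from $B$ occurs precisely at $z$, so the correspondence is a bijection. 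Summing $\BRW$ and using multiplicativity gives $G_\KRW(a,b)=\sum_{z\in B^c}\Hm^{B}_\KRW(a,z)G_\KRW(z,b)$. The second identity is the mirror image: decompose $\gamma:a\to b$ at its last visit to $B$ (well defined since $a\in B$, and strictly before the endpoint since $b\notin B$); the terminal segment $\gamma_2:z\to b$ then has all intermediate vertices outside $B$, i.e. $\gamma_2\subseteq B^c$, and summation over $z\in B$ produces $\sum_{z\in B}G_\KRW(a,z)\Hm^{B^c}_\KRW(z,b)$.

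The third and fourth identities are the same two arguments applied to paths $\gamma:b\to a$, with the roles of $B$ and $B^c$ interchanged. For the third, decompose $\gamma:b\to a$ at its first visit to $B$ (which exists since $a\in B$, and occurs at index $\ge1$ since $b\notin B$); the initial segment lies in $\Hm^{B^c}_\KRW$ and summing over $z\in B$ gives $\sum_{z\in B}\Hm^{B^c}_\KRW(b,z)G_\KRW(z,a)$. For the fourth, decompose $\gamma:b\to a$ at its last visit to $B^c$ (which exists since $b\notin B$, and occurs strictly before the endpoint since $a\in B$); the terminal segment lies in $\Hm^{B}_\KRW$ and summing over $z\in B^c$ gives $\sum_{z\in B^c}G_\KRW(b,z)\Hm^{B}_\KRW(z,a)$. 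In each of the four cases one verifies the bijection between paths and triples exactly as in the first case.

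There is no serious obstacle here; the work is bookkeeping. The two points that require genuine care are: (i) checking that each decomposition is a bijection — in particular that reconcatenating a legal triple reproduces precisely the same first/last visit index, so that nothing is double-counted or omitted; and (ii) keeping straight in each of the four cases which endpoint lies in $B$, hence whether the restricted factor is $\Hm^{B}_\KRW$ or $\Hm^{B^c}_\KRW$ and whether the index $z$ ranges over $B$ or $B^c$. The degenerate length-zero segments (permitted, with weight $1$, when $z$ coincides with an endpoint) cause no trouble once one observes that in the decompositions where a trivial segment is problematic, the constraints $a\in B$, $b\notin B$ force $z\neq a$ and $z\neq b$ anyway.
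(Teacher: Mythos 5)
Your proof is correct and is exactly the argument the paper intends: the paper states this lemma without proof, offering only the remark that one decomposes a path at its first or last visit to the set, which is precisely the bijection-plus-multiplicativity argument you carry out in detail.
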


\subsection{Some facts about random walk and the Green function.}
For $x\in\Z^d$, we write $S_x=(S_x(n))_{n\in\N}$ for the random walk with jump distribution $\theta$ starting from $S_x(0)=x$. The norm $\| \cdot \|$ corresponding to $\theta$ for every $x\in \Z^d$ is defined to be $\|x\|=\sqrt{x\cdot Q^{-1}x}/\sqrt{d}$, where $Q$ is the covariance matrix of $\theta$. Note that $\|x\|\asymp |x|$, especially, there exists $c>1$, such that $\Ball(c^{-1}n)\subseteq\BallE(n)\subseteq\Ball(cn)$, for any $n\geq1$. The Green function $g(x,y)$ is defined to be:
$$
g(x,y)=\sum_{n=0}^{\infty}P(S_x(n)=y)=\sum_{\gamma:x\rightarrow y}\SRW(\gamma).
$$
We write $g(x)$ for $g(0,x)$.

Our assumptions about the jump distribution $\theta$ guarantee the standard estimate for the Green function (see e.g. Theorem 2 in \cite{U98}):
\begin{equation}\label{green}
g(x)\sim a_d\| x \|^{2-d};
\end{equation}
and (e.g. one can verify this using the error estimate of Local Central Limit Theorem in \cite{U98})
\begin{equation}\label{ngreen}
\sum_{n=0}^{\infty}(n+1)\cdot P(S_0(n)=x)=\sum_{\gamma:0\rightarrow x}[\gamma]\cdot\SRW(\gamma)\asymp \| x\| ^{4-d}\asymp |x|^{4-d}.
\end{equation}
where $a_d=\frac{\mathbf{\Gamma}((d-2)/2))}{2d^{(d-2)/2}\pi^{d/2}\sqrt{\text{det} Q}}$.

Also by LCLT, one can get the following lemma.
\begin{lemma}\label{pro-g2}
\begin{equation}
\lim_{n\rightarrow\infty}sup_{x\in\Z^d}\left(\sum_{\gamma:0\rightarrow x,|\gamma|\geq n|x|^2}
\SRW(\gamma)/g(x)\right)=0.
\end{equation}
\end{lemma}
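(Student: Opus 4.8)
The plan is to reduce the uniform statement to a pointwise large-deviations estimate on the number of steps of a random walk bridge, using the Local Central Limit Theorem in the form available from \cite{U98}. First I would fix $x\in\Z^d$ with $\|x\|$ large (the case of bounded $\|x\|$ is handled separately, since there the sum $\sum_{\gamma:0\to x,|\gamma|\ge n|x|^2}\SRW(\gamma)=\sum_{k\ge n|x|^2}P(S_0(k)=x)$ is a tail of a convergent series with $g(x)\asymp 1$ bounded below, so it tends to $0$ as $n\to\infty$ uniformly over any finite set of such $x$). So write
$$
\sum_{\gamma:0\to x,\,|\gamma|\ge n|x|^2}\SRW(\gamma)=\sum_{k\ge n|x|^2}P(S_0(k)=x),
$$
and the goal is to show this is $o(g(x))=o(\|x\|^{2-d})$ uniformly in $x$ as $n\to\infty$.

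The main step is to bound $P(S_0(k)=x)$ for $k\ge n|x|^2$. I would split the range of $k$ into a ``diffusive'' part $n|x|^2\le k\le \|x\|^{d}$ (say) and a ``deep tail'' part $k\ge \|x\|^{d}$. For the deep tail, the trivial bound $P(S_0(k)=x)\le \sup_y P(S_0(k)=y)\preceq k^{-d/2}$ from the LCLT gives $\sum_{k\ge \|x\|^d}k^{-d/2}\preceq \|x\|^{d(1-d/2)}$, which is much smaller than $\|x\|^{2-d}$ for $d\ge5$, with room to spare, so this contributes $o(g(x))$ with no dependence on $n$. For the diffusive part, the LCLT with its error term (Theorem~2 of \cite{U98}, or equivalently the Gaussian estimate $P(S_0(k)=x)\preceq k^{-d/2}\exp(-c\|x\|^2/k)$ valid in this range) gives
$$
\sum_{n|x|^2\le k\le \|x\|^d}P(S_0(k)=x)\preceq \sum_{k\ge n|x|^2}k^{-d/2}e^{-c\|x\|^2/k}\preceq \sum_{k\ge n|x|^2}k^{-d/2},
$$
and comparing the sum with the integral, $\sum_{k\ge n|x|^2}k^{-d/2}\asymp (n|x|^2)^{1-d/2}=n^{1-d/2}\,|x|^{2-d}$. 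Dividing by $g(x)\asymp\|x\|^{2-d}\asymp |x|^{2-d}$ yields a bound $\preceq n^{1-d/2}$, which is independent of $x$ and tends to $0$ as $n\to\infty$ since $d\ge5$. Taking the supremum over $x$ and then $n\to\infty$ finishes the argument.

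The step I expect to require the most care is making the Gaussian/LCLT estimate $P(S_0(k)=x)\preceq k^{-d/2}e^{-c\|x\|^2/k}$ genuinely uniform in $x$ and $k$ in the relevant range under only the weak moment hypothesis \eqref{as1} (rather than, say, a finite exponential moment), and handling the regime where $k$ is comparable to $|x|^2$ rather than much larger — there the exponential factor is not close to $1$ and one must be a bit more careful, but since we are summing over $k\ge n|x|^2$ with $n$ large the exponent $c\|x\|^2/k\le c/n$ is small, so the factor is bounded and the crude bound $e^{-c\|x\|^2/k}\le 1$ suffices throughout. A secondary point is simply the interface between the ``bounded $\|x\|$'' and ``large $\|x\|$'' cases: one picks a threshold $R$, handles $\|x\|\le R$ by the convergent-tail remark above (uniformly, as it is a finite sum), and $\|x\|>R$ by the estimate just described; both tend to $0$ as $n\to\infty$, uniformly, which is exactly the claim.
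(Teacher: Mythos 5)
Your argument is correct and is exactly the standard LCLT computation the paper has in mind (the paper gives no written proof, only the remark that the lemma follows from the LCLT): the uniform bound $\sup_y P(S_0(k)=y)\preceq k^{-d/2}$ already yields $\sum_{k\geq n|x|^2}P(S_0(k)=x)\preceq (n|x|^2)^{1-d/2}=n^{1-d/2}|x|^{2-d}\preceq n^{1-d/2}g(x)$, uniformly in $x$. As you note yourself, the Gaussian factor and the diffusive/deep-tail split are not actually needed, so no further care is required there.
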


The following lemma is natural from the perspective of Brownian motion, the scaling limit of random walk.
\begin{lemma}\label{pro-g1}
Let $U,V$ be two connected bounded open subset of $\R^d$ such that $\overline{U}\subseteq V$. Then there exists a $C=C(U,V)$ such that if $A_n=nU\cap\Z^d, B_n=nV\cap\Z^d$ then when $n$ is sufficiently large,
\begin{equation}\label{pro-b1}
\sum_{\gamma:x\rightarrow y, \gamma\subseteq B_n, |\gamma| \leq2n^2}\SRW(\gamma)\geq Cg(x,y), \text{ for any }x,y\in A_n
\end{equation}
\end{lemma}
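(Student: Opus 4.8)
The plan is to compare the killed/restricted walk to the full walk by showing that the two constraints imposed in \eqref{pro-b1} — staying inside $B_n$ and running for at most $2n^2$ steps — each only cost a constant factor, uniformly in $x,y\in A_n$. I would work with the diffusive scaling throughout: distances of order $n$, times of order $n^2$. The key input is the invariance principle (via the LCLT estimates \eqref{green}, \eqref{ngreen}, Lemma \ref{pro-g2}), which tells us that the rescaled walk $S_{\lfloor nx\rfloor}(\lfloor n^2 t\rfloor)/n$ converges to a Brownian motion with covariance $Q$, and that the Green function $g(x,y)$ is comparable to $\|x-y\|^{2-d}$.

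First I would handle the time truncation. By Lemma \ref{pro-g2}, there is an $N$ (independent of $x$) such that $\sum_{\gamma:0\to w,\,|\gamma|\geq N\|w\|^2}\SRW(\gamma)\leq \tfrac12 g(w)$ for all $w$; since $\mathrm{diam}(A_n)\asymp n$, for $x,y\in A_n$ we have $\|x-y\|\leq Cn$, so paths of length $\geq 2n^2$ (taking $n$ large enough that $2n^2\geq N\|x-y\|^2$, which may require absorbing the diameter constant — if $U$ is large one rescales or uses a slightly larger time bound; more carefully, iterate Lemma \ref{pro-g2} over a bounded number of time-blocks of length $\asymp n^2$) contribute at most half the Green function. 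Hence $\sum_{\gamma:x\to y,\,|\gamma|\leq Cn^2}\SRW(\gamma)\geq\tfrac12 g(x,y)$. Here I should be careful: the clean statement of Lemma \ref{pro-g2} gives cutoff $n|x|^2$, and to get the cutoff $2n^2$ stated in \eqref{pro-b1} one wants $U$ fixed and may need to choose the implicit constant $C(U,V)$ to depend on $U$; since $A_n\subseteq nU$ and $U$ is bounded, $\|x-y\|^2\leq \mathrm{diam}(U)^2 n^2 (1+o(1))$, and splitting $[0,2n^2]$ into a bounded number ($\asymp \mathrm{diam}(U)^2$) of windows lets me iterate the $L^1$ contraction. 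This is routine.

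The main obstacle, and the part deserving real care, is the spatial restriction $\gamma\subseteq B_n$: I must show that a walk from $x$ to $y$ with both endpoints in $A_n=nU\cap\Z^d$ has a uniformly positive chance of reaching $y$ without ever leaving $B_n=nV\cap\Z^d$, where $\overline U\subseteq V$. The Brownian heuristic is clear — Brownian motion started in $U$ has positive probability of hitting a small ball around any target point in $U$ before exiting $V$, and this probability is bounded below uniformly over start/target in $\overline U$ by compactness and the strong Markov property — but transferring it to the discrete walk with a \emph{Green's function} lower bound (not just a hitting probability) requires an argument like the following: decompose $g(x,y)=\Hm^{B_n}(x,y)\cdot(\text{something})$ is the wrong direction, so instead I would write the restricted sum as (number of visits to $y$ before exiting $B_n$), bound below the probability of visiting $y$ at all before exiting $B_n$ using a chaining/barrier argument — cover a tube from $x$ to $y$ inside $V$ by a bounded number of balls of radius $\asymp n$, use the invariance principle in each ball to move from one to the next with probability bounded below while staying in $B_n$, then once near $y$ use the local CLT to hit $y$ exactly with probability $\asymp n^{-d}$ — and finally note that conditioned on visiting $y$ before exiting, the expected number of further visits to $y$ before exiting is $\asymp n^2 \cdot n^{-d}\cdot n^{?}$... more cleanly, $\sum_{\gamma:x\to y,\gamma\subseteq B_n}\SRW(\gamma)=P_x(\tau_y<\tau_{B_n^c})\cdot\Hm^{B_n^c\text{-avoiding}}_{\phantom{}}(y,y)$ where the last factor is the expected number of visits to $y$ before exiting $B_n$ starting from $y$, which is $\asymp g_{B_n}(y,y)\asymp n^2\wedge(\dots)$ — in any case comparable to $g(x,y)$ when $x,y$ are well inside and $\asymp n$ apart. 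I would assemble this from the strong Markov property plus the quantitative invariance principle; the compactness of $\overline U$ and openness of $V$ make all the barrier probabilities uniformly positive, and combining with the time-truncation step (intersecting the two events costs only another constant) yields \eqref{pro-b1}.
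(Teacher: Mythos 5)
Your overall strategy (a chaining argument through finitely many balls, powered by the LCLT, plus a first-visit decomposition reducing the restricted Green's function to a hitting probability) is the same one the paper uses. But there are two genuine gaps.

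First, and most importantly, your argument only produces the bound at the diffusive scale: chaining through balls of radius $\asymp n$ and then "hitting $y$ exactly" yields a lower bound of order $n^{2-d}$ for the restricted Green's function. The lemma, however, is claimed for \emph{all} $x,y\in A_n$, including pairs with $\|x-y\|\ll n$ (even $\|x-y\|=O(1)$ or $x=y$), for which $g(x,y)\asymp\|x-y\|^{2-d}\gg n^{2-d}$. Your bound is then off by the unbounded factor $(n/\|x-y\|)^{d-2}$. The paper closes this by first proving the confined, time-truncated Green's function between $z,w$ in a ball of radius $m$ is $\succeq m^{2-d}$, observing that this quantity is \emph{monotone increasing in $m$} for fixed $z,w$, and then running the estimate at the scale $m\asymp\|z-w\|$ to upgrade it to $\succeq\|z-w\|^{2-d}$ (its inequality \eqref{ll3}). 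Some device of this kind -- an estimate valid at every intermediate scale, not just at scale $n$ -- is indispensable, and your proposal does not contain it.

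Second, you treat the time truncation and the spatial confinement as two separate reductions and assert that "intersecting the two events costs only another constant." That inference is invalid for lower bounds: from $\sum_{\gamma\subseteq B_n}\SRW(\gamma)\geq c_2\,g(x,y)$ and $\sum_{|\gamma|>2n^2}\SRW(\gamma)\leq\tfrac12 g(x,y)$ you get $c_2-\tfrac12$, which is useless unless $c_2>1/2$, and the spatial restriction can certainly cost more than a factor of $2$. (Moreover, your Lemma \ref{pro-g2} step requires $2n^2\geq N\|x-y\|^2$, which fails outright when $\mathrm{diam}(U)$ is large; when $\|x-y\|\asymp\mathrm{diam}(U)\,n$ is large relative to $\sqrt{2}\,n$, the short-path mass genuinely is only a small $U$-dependent fraction of $g(x,y)$, so no argument can make it $\geq\tfrac12 g(x,y)$.) The fix is to build both constraints into the chaining simultaneously, as the paper does: each of the $\leq k$ legs is required to take at most $2\epsilon n^2$ steps with $\epsilon<1/k$, so the concatenated path automatically satisfies $|\gamma|\leq 2n^2$ while staying in $B_n$. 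A minor further slip: the expected number of visits to $y$ before exiting $B_n$, starting from $y$, is between $1$ and $g(y,y)=O(1)$ in $d\geq5$, not $\asymp n^2$; fortunately you only need the trivial lower bound $1$ there.
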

Since this Lemma may not be standard, we give a sketch of proof here:
\begin{proof}[Sketch of Proof.]
Without loss of generality, one can assume $\theta$ is aperiodic. The first step is to show:
\begin{itemize}
\item
There is a $\delta\in(0,0.1)$, such that, for any $\epsilon>0$ small enough, and $m\in\N^+$ large enough (depending on $\epsilon$), we can find $c_1=c_1(\epsilon)$, such that, for any $n\in [\epsilon m^2,2\epsilon m^2]$, $z,w\in\Ball(3\delta m)$, we have:
\begin{equation}\label{ll1}
p_n^{m}(z,w)\doteq \sum_{\gamma:z\rightarrow w,\gamma\subseteq \Ball (m),|\gamma|=n}\SRW(\gamma)\geq c_1\cdot m^{-d}.
\end{equation}
\end{itemize}
Indeed, the Markov property implies that:
$$
p_n^{m}(z,w)\geq P(S_z(n)=w)-\max\{P(S_y(k)=w):k\leq n, y\in (\Ball (m))^c\},
$$
and the LCLT establishes \eqref{ll1}.
Using this estimate, one can see that:
\begin{itemize}
\item
For any $\epsilon>0$ small enough, and $m\in\N^+$ large enough, we can find $c_2=c_2(\epsilon)$, such that, for any $z,w\in\Ball(3\delta m)$, we have (we write $\Ball_x(r)$ for the ball centered at $x$ with radius $r$):
\begin{equation}\label{ll2}
\sum_{\gamma:z\rightarrow w,|\gamma|\leq 2\epsilon m^2,\gamma\subseteq \Ball(m)}\SRW(\gamma)\geq c_2 m^{2-d};
\end{equation}
\begin{equation*}
\sum_{\gamma:z\rightarrow \Ball_w(\delta m/10),|\gamma|\leq 2\epsilon m^2,\gamma\subseteq \Ball(m)}\SRW(\gamma)\geq c_2 m^2.
\end{equation*}
\end{itemize}
Note that in the first assertion, the left hand side is increasing for $m$ when $z,w$ are fixed. Due to this fact, one can get that
\begin{itemize}
\item
For any $\epsilon>0$ small enough, and $m\in\N^+$ large enough, we can find $c_2=c_2(\epsilon)$, such that, for any $z,w\in\Ball(3\delta m)$, we have:
\begin{equation}\label{ll3}
\sum_{\gamma:z\rightarrow w,|\gamma|\leq 2\epsilon m^2,\gamma\subseteq \Ball(m)}\SRW(\gamma)\geq c_3 \|z-w\|^{2-d};
\end{equation}
\end{itemize}

By considering the first visit of $\Ball_w(\delta m/10)$, one can get:
\begin{align*}
\,&\sum_{\gamma:z\rightarrow \Ball_w(\delta m/10),|\gamma|\leq 2\epsilon m^2,\gamma\subseteq \Ball(m)\setminus \Ball_w(\delta m/10) }\SRW(\gamma)\\
\geq& \sum_{\gamma:z\rightarrow \Ball_w(\delta m/10),|\gamma|\leq 2\epsilon m^2,\gamma\subseteq \Ball(m)}\SRW(\gamma)/\max\{g(x,\Ball(\delta m/10)): x\in\Ball(\delta m/10)\}\\
\asymp& m^2/m^2\asymp1.
\end{align*}
Hence we have:
\begin{itemize}
\item
For any $\epsilon>0$ small enough, and $m\in\N^+$ large enough, we can find $c_4=c_4(\epsilon)$, such that, for any $z,w\in\Ball(3\delta m)$, we have:
\begin{equation}\label{ll4}
\sum_{\gamma:z\rightarrow \Ball_w(\delta m/10),|\gamma|\leq 2\epsilon m^2,\gamma\subseteq \Ball(m)\setminus \Ball_w(\delta m/10) }\SRW(\gamma)\geq c_4.
\end{equation}
\end{itemize}

Now one can show the lemma. Without loss of generality, assume $\dist (U,V^c)=1$. First, choose a finite number of balls with radius $\delta$ and centers at $U$: $B_1,B_2,\dots,B_k$ covering $\overline{U}$. Choose $\epsilon$ small enough for \eqref{ll2},\eqref{ll3},\eqref{ll4} and $\epsilon<1/k$. Now we argue that when $n$ is sufficiently large, \eqref{pro-b1} holds.

Write $B'_i=nB_i\cap \Z^d$ and $\overline{B}'_i=n\overline{B}_i\cap \Z^d$ for $i=1,\dots,k$, where $\overline{B}_i$ is the ball with radius $1$ and the same center of $B_i$. When $\|x-y\|\leq 2\delta n$, by \eqref{ll3} we have \eqref{pro-b1}. Otherwise, $x,y$ are not on the same $B'_i$. However, we can find at most $k+1$ points $x_0=x,x_1,\dots,x_l=y$,($l\leq k$) such that $x_j$ and $x_{j+1}$ are in the same $B'_i$, say $B'_j$. Note that when $z,w$ are on the same $B'_i$, by \eqref{ll4}, for any $z'\in \Ball_z(\delta n/10)$,
$$
\sum_{\gamma:z'\rightarrow \Ball_w(\delta n/10),|\gamma|\leq 2\epsilon n^2,\gamma\subseteq \overline{B}'_i\setminus \Ball_w(\delta n/10) }\SRW(\gamma)\geq c_4.
$$
Hence, by connecting paths, one can get:
\begin{align*}
&\sum_{\gamma:x\rightarrow y, \gamma\subseteq B_n, |\gamma| \leq2n^2}\SRW(\gamma)\geq \sum_{\gamma_0:x_0\rightarrow \Ball_{x_1}(\delta n/10),|\gamma_0|\leq 2\epsilon n^2,\gamma_0\subseteq \overline{B}'_0\setminus \Ball_{x_1}(\delta n/10) }\SRW(\gamma_0)\\
&\cdot\sum_1\SRW(\gamma_1)\cdot\sum_2\SRW(\gamma_2)\cdot\dots\sum_{l-2}\SRW(\gamma_{l-2})
\cdot \sum_{\gamma_{l-1}:\widehat{\gamma}_{l-2}\rightarrow y,|\gamma_{l-1}|\leq 2\epsilon n^2,
\gamma_{l-1}\subseteq \overline{B}'_{l-1}}\SRW(\gamma_{l-1})\\
&\geq (c_4)^{l-1}\cdot c_2(n^{2-d})\geq (c_4)^k c_2 n^{2-d}\asymp g(x,y),\\
\end{align*}
where $\sum_j=\sum_{\gamma_j:\widehat{\gamma}_{j-1}\rightarrow \Ball_{x_j}(\delta n/10),|\gamma_j|\leq 2\epsilon n^2,\gamma_j\subseteq \overline{B}'_j\setminus \Ball_{x_{j+1}}(\delta n/10)}$ for $j=1,\dots, l-2$.

\end{proof}

Since our jump distribution $\theta$ may be unbounded, we need the following Overshoot Lemma:
\begin{lemma}\label{overshoot}
For any $r,s>1$, let $B=\Ball(r)$. Then for any $a\in B$, we have:
\begin{equation}\label{os}
\sum_{y\in(\Ball (r+s))^c} \Hm_\KRW^{B}(a,y)\preceq\frac{r^2}{s^{d}},\quad
\sum_{y\in(\Ball (r+s))^c} \Hm_\KRW^{B}(y,a)\preceq\frac{r^2}{s^{d}}.
\end{equation}
\end{lemma}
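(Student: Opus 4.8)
The plan is to discard the killing, then decompose every path contributing to $\Hm^B_\KRW$ at the moment it last (resp.\ first) occupies $B=\Ball(r)$, so that the weight of the ``overshoot'' jump factors out of the weight of the excursion inside $B$. Since $0\le\KRW\le 1$, \eqref{def-b} gives $\BRW(\gamma)\le\SRW(\gamma)$ for every path $\gamma$, hence $\Hm^B_\KRW(u,v)\le\Hm^B(u,v)$ for all $u,v\in\Z^d$; so it is enough to prove \eqref{os} with $\Hm^B$ in place of $\Hm^B_\KRW$.

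For the first inequality I would fix $y\in(\Ball(r+s))^c$ (in particular $y\notin B$) and split any path $\gamma$ from $a$ to $y$ with $\gamma\subseteq B$ as a path $\gamma'$ from $a$ to some $z$, all of whose vertices lie in $B$, followed by a single jump from $z$ to $y$; note $z\in\Ball(r)$. Since the sum of $\SRW(\gamma')$ over such $\gamma'$ is at most $\sum_{\text{all }\gamma':a\to z}\SRW(\gamma')=g(z-a)$, this gives
$$
\sum_{y\in(\Ball(r+s))^c}\Hm^B(a,y)\le\sum_{z\in\Ball(r)}g(z-a)\sum_{y\in(\Ball(r+s))^c}\theta(y-z)\le\Big(\sum_{z\in\Ball(r)}g(z-a)\Big)\cdot\theta(\{v:\|v\|>s\}),
$$
where the last step uses that $\|z\|\le r$ and $\|y\|>r+s$ force $\|y-z\|>s$ by the triangle inequality for $\|\cdot\|$. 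The second inequality is handled symmetrically: split $\gamma$ from $y$ to $a$ at its first vertex $z=\gamma(1)\in B$, which produces the same bound with $g(z-a)$ replaced by $g(a-z)$.

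It then remains to estimate the two factors uniformly in $a,r,s$ (and $\KRW$). The jump factor is bounded using \eqref{as1} together with $\|\cdot\|\asymp|\cdot|$: one gets $\theta(\{v:\|v\|>s\})\preceq s^{-d}$ for all $s>1$, the range $s=O(1)$ being trivial since the probability is at most one. For the Green-function factor, $a\in\Ball(r)$ implies $z-a$ (resp.\ $a-z$) lies in $\Ball(2r)$ for every $z\in\Ball(r)$, so $\sum_{z\in\Ball(r)}g(z-a)\le\sum_{w\in\Ball(2r)}g(w)$; extending \eqref{green} to the uniform bound $g(w)\preceq\|w\|^{2-d}$ (valid for all $w$ since $g$ is finite everywhere) and using the shell count $\#\{w:\|w\|\in[k,k+1)\}\asymp k^{d-1}$ gives $\sum_{w\in\Ball(2r)}g(w)\preceq\sum_{1\le k\le 2r}k^{d-1}\cdot k^{2-d}\asymp r^2$. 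Multiplying the two bounds yields \eqref{os}.

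None of these steps is deep; the hard part is really just the bookkeeping. One must verify that the $B$-confined piece of $\gamma$ is genuinely dominated by the \emph{full} Green function $g$ — which works because $\gamma\subseteq B$ forces \emph{every} vertex of that piece (not only its interior ones) to lie in $B$ — and one must keep the estimate $\sum_{z\in\Ball(r)}g(z-a)\preceq r^2$ uniform over the starting point $a$, which is precisely what enlarging $\Ball(r)$ to $\Ball(2r)$ accomplishes. I do not anticipate any genuine obstacle beyond these two points.
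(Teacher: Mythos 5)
Your proof is correct and follows essentially the same route as the paper: drop the killing via $\BRW\le\SRW$, perform a last-exit (resp. first-entrance) decomposition at the final vertex in $\Ball(r)$, bound the confined piece by $\sum_{z\in\Ball(r)}g(a,z)\preceq r^2$, and bound the overshooting jump by $\theta(\{v:\|v\|>s\})\preceq s^{-d}$ using \eqref{as1}. The extra bookkeeping you supply (the triangle-inequality step and the uniformity in $a$) is exactly what the paper leaves implicit.
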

\begin{proof}
It suffices to show the case when $\KRW\equiv0$. By considering where the last position is before leaving $\Ball(r)$, one can get:
\begin{multline*}
\sum_{y\in(\Ball (r+s))^c} \Hm^{B}(a,y)
\leq\sum_{z\in \Ball(r)}g(a,z)P(\text{the jump leaving }\Ball(r)\geq s)\\
\stackrel{\eqref{as1}}{\leq}(\sum_{z\in \Ball(r)}g(a,z))\cdot C/ s^{d}
\preceq\frac{r^2}{s^{d}}.\\
\end{multline*}
One can show the other inequality similarly.
\end{proof}

\section{Escape probabilities and branching capacity.}
For any $K\ssubset \Z^d$, we are interested in the probability of visiting $K$ by the critical branching random walk with offspring distribution $\mu$ and jump distribution $\theta$, or equivalently, a snake. For any $x\in \Z^d$, write $\pS(x)$, $\rS(x)$, $\qS(x)$ and $\qrS(x)$, respectively, for the probability that a snake, an adjoint snake, an infinite snake and a reversed infinite snake, respectively, starting from $x$ visits $K$, i.e. $P((\Snake_T(T)\cap K)\neq \emptyset)$ where $T,\Snake_T$ are the corresponding random tree and random map. We write $\PS(x)$ and $\RS(x)$ respectively for the probability that a snake and an adjoint snake respectively, starting from $x$ visits $K$ strictly after time zero, i.e. $P((\Snake_T(T\setminus \{o\})\cap K)\neq \emptyset)$. Note that when $x\notin K$, $\pS(x)=\PS(x)$ and $\rS(x)=\RS(x)$. We delete the dependence on $K$ in the notations since we will fix $K$ until Section 9.

We first give some preliminary upper bounds for the visiting probabilities by computing the expectation of the number of visits. Here are the computations. When $x$ is relatively far from $K$, say $\dist(x,K)\geq 2\diam(K)$. For the snake $\Snake_x$, the expectation of the number of offspring at $n$-th generation is one. Hence, the expectation of the number of visiting any $a\in K$ is just $g(x,a)\asymp \|x-a\|^{2-d}\asymp \|x\|^{2-d}$. For the adjoint snake $\Snake'_x$, the expectation of the number of offspring at $n$-th generation (for $n\geq 1$) is $\E\widetilde{\mu}=\sigma^2/2\asymp1$ (recall that $\mu$ is fixed). Hence the expectation of the total number of visiting $a$ can also be bounded by $g(x,a)$ up to some constant multiplier. For the infinite snake $\Snake^\infty_x$, one can see that the expectation of the number of offspring at $n$-th generation is $1+n\cdot\E\widetilde{\mu}\asymp n+1$. Hence when $\dist(x,K)\geq 2\diam(K)$, the expectation of the total number of visiting $a$ is bounded, up to some constant, by:
$$
\sum_{n=0}^{\infty} (n+1)P(S_x(n)=a)\stackrel{\eqref{ngreen}}{\asymp}\|x-a\|^{4-d}\asymp\|x\|^{4-d}.
$$
Recall that $S_x=(S_x(n))_{n\in\N}$ is the random walk starting from $x$ with jump distribution $\theta$. Summing up over all $a\in K$, we get
\begin{equation}\label{1bd}
\begin{array}{l}
\pS(x)\preceq |K|/\|x\|^{d-2};\\
\rS(x)\preceq |K|/\|x\|^{d-2};\\
\qS(x)\preceq |K|/\|x\|^{d-4}.\\
\end{array}
\end{equation}
For $\qrS(x)$, one can also see that the expectation of the number of visiting points is:
$$
\sum_{y\in\Z^d}g^-(x,y)g(y,K)\asymp|K|\sum_{y\in \Z^d}\|x-y\|^{2-d}\|y\|^{2-d}\asymp |K|/\|x\|^{d-4},
$$
where $g^-(x,y)=g(y,x)$ is the Green function for the reversed random walk.

From this, we see that when $x$ tends to infinity, all four types of visiting probabilities tend to $0$. Now we introduce the escape probabilities.
\begin{definition}\label{BCap}
$K$ is a finite subset of $\Z^d$, for any $x\in \Z^d$, define $\EsC_K(x)$ to be the probability that a reversed infinite snake starting from $x$ does not visit $K$ except possibly for the image of the bush grafted to the root and $\Esc _K(x)$ to be the probability that an invariant snake starting from $x$ does not visit $K$ except possibly for the image of the spine. Define the \textbf{Branching capacity} of $K$ by:
\begin{equation}\label{def_BCap}
\BCap (K)=\sum_{a\in K}\EsC _K(a)=\sum_{a\in K}\Esc _K(a).
\end{equation}
\end{definition}

\begin{remark}
\cite{ARZ15} constructs the model of branching interlacement. As a main step, they gives the definition of branching capacity (only) when $\mu$ is the critical geometric distribution. In that case, the branching capacity here is equivalent to the branching capacity there, up to a constant factor 2. But here we do not need the so-called contour function which plays an important role there. Furthermore, we can construct the model of branching interlacement for general critical offspring distribution. For more details about this, see the forthcoming paper \cite{Z164}.
\end{remark}

The last equality can be seen from our main theorem of branching capacity, Theorem \ref{MT1}. We also introduce the escape probability for the infinite snake $\EsC^+_K(x)$, which is defined to be the probability that an infinite snake starting from $x$ does not visit $K$ except possibly for the image of the bush grafted to the root. Note that $\EsC^+_K(x)\leq 1-\qS(x)\rightarrow 1$, as $n\rightarrow \infty$.

\begin{remark}\label{compare}
If we let $\mu$ be the degenerate measure, that is, $\mu(1)=1$, then: the snake and the infinite snake are just the random walk with jump distribution $\theta$; the reversed infinite snake and the invariant snake are the random walk with jump distribution $\Rtheta$. Then $\EsC_K$ is just the escape probability for the 'reversed' walk and $\Esc_K$ is the escape probability for the 'original' walk. In that case, Theorem \ref{MT1} is just the classical theorem for regular capacity.
Note that when $\theta$ is symmetric, for random walk, $\EsC_K(a)=\Esc_K(a)$. But this is generally not true for branching random walk even when $\theta$ is symmetric. If $K=\{a\}$ consists of only one point, then it is true by Theorem \ref{MT1}
\end{remark}

\section{Monotonicity and subadditivity.} We postpone the proof of Theorem \ref{MT1} until Section 7. We now state some basic properties about branching capacity. Like regular capacity, branching capacity is monotone and subadditive:
\begin{prop}\label{mono}
For any $K\subset K'$ finite subsets of $\Z^d$,
\begin{equation*}
\BCap(K)\leq \BCap(K');
\end{equation*}
For any $K_1,K_2$ finite subsets of $\Z^d$,
\begin{equation*}
\BCap(K_1\cap K_2)+\BCap(K_1\cup K_2)\leq \BCap(K_1)+\BCap(K_2).
\end{equation*}
\end{prop}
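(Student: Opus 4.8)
The plan is to prove both statements simultaneously from a single pointwise coupling/monotonicity fact about the escape probabilities, rather than manipulating the branching capacity directly. The key observation is that the escape probability $\EsC_K(x)$ — the probability that a reversed infinite snake started at $x$ avoids $K$ outside the bush grafted to the root — is \emph{monotone decreasing in $K$} for every fixed $x$: enlarging $K$ can only make the avoidance event smaller. This gives, for $K\subseteq K'$ and $a\in K$, that $\EsC_K(a)\ge\EsC_{K'}(a)$; but this inequality points the wrong way for monotonicity of $\BCap$, since the sum defining $\BCap(K')$ also runs over the larger set $K'$. So the honest route is through Theorem~\ref{MT1}: by \eqref{m1-1}, $\BCap(K)=\lim_{x\to\infty}\|x\|^{d-2}P(\Snake_x\text{ visits }K)/a_d$, and the event $\{\Snake_x\text{ visits }K\}$ is contained in $\{\Snake_x\text{ visits }K'\}$ whenever $K\subseteq K'$; monotonicity of $\BCap$ is immediate upon taking the limit. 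Strictly speaking Theorem~\ref{MT1} is only proved in Section~7, so for a self-contained Section~5 argument I would instead give the direct coupling proof below, which also yields subadditivity.

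For subadditivity, I would work with the visiting-probability functional $p_{K}(x)\DeFine P(\Snake_x\text{ visits }K)$ for a \emph{fixed} snake $\Snake_x$, and prove the pointwise inequality
\begin{equation}\label{pp-1}
p_{K_1\cap K_2}(x)+p_{K_1\cup K_2}(x)\le p_{K_1}(x)+p_{K_2}(x)\quad\text{for every }x.
\end{equation}
This is a purely set-theoretic statement about the range $R=\Snake_x(T)$ of the snake: writing $A_i=\{R\cap K_i\ne\emptyset\}$, one has $\mathbf 1_{A_1\cap A_2}+\mathbf 1_{A_1\cup A_2}=\mathbf 1_{A_1}+\mathbf 1_{A_2}$, and moreover $\{R\cap(K_1\cap K_2)\ne\emptyset\}\subseteq A_1\cap A_2$ while $\{R\cap(K_1\cup K_2)\ne\emptyset\}=A_1\cup A_2$; taking expectations gives \eqref{pp-1}. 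Multiplying \eqref{pp-1} by $\|x\|^{d-2}$ and sending $x\to\infty$, using \eqref{m1-1} for each of the four sets, yields the claimed subadditivity of $\BCap$. The same argument with $K_2=\emptyset$ (so $K_1\cap K_2=\emptyset$, $K_1\cup K_2=K_1$) and, more to the point, the inclusion $\{R\cap K\ne\emptyset\}\subseteq\{R\cap K'\ne\emptyset\}$ for $K\subseteq K'$, recovers monotonicity the same way.

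If one insists on an argument internal to Section~5 that does not invoke Theorem~\ref{MT1}, the alternative is to run the identical inclusion–exclusion bookkeeping on the \emph{escape} events rather than the visiting events. Here one fixes a reversed infinite snake $\Snake^-_x$ and lets $E_K(x)$ be the event ``$\Snake^-_x$ avoids $K$ outside the root bush''; for $K\subseteq K'$ one has $E_{K'}(x)\subseteq E_K(x)$ and for general $K_1,K_2$ one has $E_{K_1\cup K_2}(x)=E_{K_1}(x)\cap E_{K_2}(x)$ and $E_{K_1\cap K_2}(x)\supseteq E_{K_1}(x)\cup E_{K_2}(x)$, so that $\mathbf 1_{E_{K_1\cap K_2}}+\mathbf 1_{E_{K_1\cup K_2}}\ge\mathbf 1_{E_{K_1}}+\mathbf 1_{E_{K_2}}$ pointwise; taking $x=a$ and summing $\EsC$ over $a$ requires care because the index sets differ, which is exactly where the genuine difficulty lies — one must split $\sum_{a\in K_1\cup K_2}=\sum_{a\in K_1}+\sum_{a\in K_2\setminus K_1}$ and compare $\EsC_{K_1\cup K_2}(a)$ against $\EsC_{K_1}(a)$ for $a\in K_1$ and against $\EsC_{K_2}(a)$ for $a\in K_2\setminus K_1$, using the monotonicity of $\EsC$ in the set. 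I expect this reconciliation of the differing index sets to be the main obstacle; the cleanest resolution, and the one I would ultimately present, is simply to cite \eqref{m1-1} from Theorem~\ref{MT1} (whose proof in Section~7 does not logically depend on Proposition~\ref{mono}) and reduce everything to the trivial inclusion–exclusion for indicator functions of the visiting events as in \eqref{pp-1}.
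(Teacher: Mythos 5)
Your proposal is correct and its main line of argument is exactly the paper's: the pointwise inclusion--exclusion on the visiting events (the paper writes it out as a three-way decomposition of the event of visiting $K_1\cup K_2$, you phrase it with indicator functions, but it is the same inequality), followed by an application of \eqref{m1-1} to pass to branching capacities; monotonicity is likewise obtained from the trivial event inclusion plus \eqref{m1-1}. The circularity worry you raise is moot, since the paper itself invokes Theorem~\ref{MT1} here with its proof deferred to Section~7, and that proof does not use Proposition~\ref{mono}.
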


\begin{proof}
When $K\subset K'$, a snake visiting $K$ must visit $K'$. So
$$
P(\Snake_x \text{ visits } K)\leq P(\Snake_x \text{ visits } K').
$$
By \eqref{m1-1}, we get $\BCap(K)\leq \BCap(K')$.
For the other inequality, we use a similar idea. First, we have:
$$
P(\Snake_x \text{ visits } K_1)= P(\Snake_x \text{ visits } K_1 \text{ but not }K_2)+P(\Snake_x \text{ visits both } K_1 \& K_2);
$$
$$
P(\Snake_x \text{ visits } K_2)= P(\Snake_x \text{ visits } K_2 \text{ but not }K_1)+P(\Snake_x \text{ visits both } K_1 \& K_2);
$$
\begin{multline*}
P(\Snake_x \text{ visits } K_1\cup K_2)=P(\Snake_x \text{ visits } K_1 \text{ but not }K_2)+\\
P(\Snake_x \text{ visits } K_2 \text{ but not }K_1)+P(\Snake_x \text{ visits both } K_1 \& K_2).
\end{multline*}
Since $P(\Snake_x \text{ visits } K_1\cap K_2)\leq P(\Snake_x \text{ visits both } K_1 \& K_2)$, we have:
\begin{multline*}
P(\Snake_x \text{ visits } K_1\cup K_2)+P(\Snake_x \text{ visits } K_1\cap K_2)
\leq P(\Snake_x \text{ visits } K_1)+P(\Snake_x \text{ visits } K_2).
\end{multline*}
This concludes the proposition by \eqref{m1-1}.
\end{proof}

\section{Random walk with special killing.} We begin with some straightforward computations. When a snake $\Snake_x=(T,\mathcal{S}_T)$ visits $K$, since $T$ is an ordered tree, we have the unique first vertex, denoted by $\tau_K$, in $\{v\in T: \mathcal{S}_T(v)\in K\}$ due to the default order. We say $\mathcal{S}_T(\tau_K)$ is the visiting point or $\Snake_x$ visits $K$ at $\mathcal{S}_T(\tau_K)$. Assume $(v_0,v_1,\dots,v_k)$ is the unique simple path in $T$ from the root $o$ to $\tau_K$. Define $\bb(\Snake_x)=(\mathcal{S}_T(v_0),\mathcal{S}_T(v_1),\dots,\mathcal{S}_T(v_k))$ and say $\Snake_x$ visits $K$ via $\bb(\Snake_x)$. We now compute $P(\bb(\Snake_x)=\gamma)$, for any given $\gamma=(\gamma(0),\dots,\gamma(k))\subseteq K^c$ starting from $x$, ending at $K$. Let $\widetilde{a}_i$ and $\widetilde{b}_i$ respectively, be the number of the older, and younger respectively, brothers of $v_i$, for $i=1,\dots,k$. From the tree structure, one can see that, for any $l_1,\dots,l_k,$ $m_1,\dots,m_k\in\N$,
\begin{multline}\label{key1}
P(\Snake_x \text{ visits }K \text{ via }\gamma; \widetilde{a}_i=l_i, \widetilde{b}_i=m_i, \text{for }i=1,\dots,k)\\
=\SRW(\gamma)\prod_{i=1}^{k}\left(\mu(l_i+m_i+1)(\rp(\gamma(i-1)))^{l_i}\right),
\end{multline}
where $\rp(z)$ is the probability that a snake starting from $z$ does not visit $K$ conditioned on the initial particle having only one child. Summing up, we get:
\begin{align*}
&P(\Snake_x \text{ visits }K \text{ via }\gamma)\\
=&\sum_{l_1,\dots, l_k;m_1,\dots,m_k\in\N}P(\Snake_x \text{ visits }K \text{ via }\gamma; \widetilde{a}_i=l_i, \widetilde{b}_i=m_i, \text{for }i=1,\dots,k)\\
=&\sum_{l_1,\dots, l_k;m_1,\dots,m_k\in\N}\SRW(\gamma)\prod_{i=1}^{k}\left(\mu(l_i+m_i+1)(\rp(\gamma(i-1)))^{l_i}\right)\\
=&\SRW(\gamma)\prod_{i=1}^{k}\sum_{l_i,m_i\in\N}\left(\mu(l_i+m_i+1)(\rp(\gamma(i-1)))^{l_i}\right)\\
=&\SRW(\gamma)\prod_{i=1}^{k}\sum_{l_i\in\N}\left(\widetilde{\mu}(l_i)(\rp(\gamma(i-1)))^{l_i}\right).\\
\end{align*}
Note that for any $z\notin K$,
\begin{equation*}
\sum_{l\in\N}\widetilde{\mu}(l)(\rp(z))^{l}
\end{equation*}
is just $1-\rS(z)$, the probability that an adjoint snake starting form $z$ does not visit $K$. If we let the killing function be
\begin{equation}\label{killing}
\KRW(x)=P(\Snake'_x\text{ visits } K )=
\rS(x).
\end{equation}
then we have (recall the definition of $\BRW(\gamma)$ from \eqref{def-b})
\begin{align*}
\BRW(\gamma)=&\SRW(\gamma)\prod_{i=1}^{k}\left(1-\KRW(\gamma(i-1))\right)
=\SRW(\gamma)\prod_{i=1}^{k}\left(1-\rS(\gamma(i-1))\right)\\
=&\SRW(\gamma)\prod_{i=1}^{k}\sum_{l_i\in\N}\left(\widetilde{\mu}(l_i)(\rp(\gamma(i-1)))^{l_i}\right)=P(\Snake_x \text{ visits }K\text{ via }\gamma).
\end{align*}
This brings us to the key formula of this work:
\begin{prop}\label{key-1}
\begin{equation}\label{key}
\BRW(\gamma)=P(\Snake_x \text{ visits }K\text{ via }\gamma).
\end{equation}
\end{prop}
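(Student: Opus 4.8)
The plan is to compute $P(\Snake_x\text{ visits }K\text{ via }\gamma)$ directly, unfolding the definition of $\bb(\Snake_x)$ and using the recursive (branching) structure of the $\mu$-GW tree together with the independence of the edge-increments of the tree-indexed walk; the argument follows the computation displayed just above the statement, reorganized as a proof.

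First I would condition on the combinatorial data along the ancestral line of $\tau_K$. Write $(v_0,\dots,v_k)$ for the simple path from the root $o=v_0$ to $\tau_K=v_k$ in $T$, and $\widetilde{a}_i,\widetilde{b}_i$ for the numbers of older and younger siblings of $v_i$; then $\{\Snake_x\text{ visits }K\text{ via }\gamma\}$ is the event that $\mathcal S_T(v_i)=\gamma(i)$ for all $i$ while $v_k$ is the first vertex of $T$, in depth-first order, with $\mathcal S_T$-image in $K$. The key structural point is that the ``$v_k$ first'' requirement is equivalent to the following: for every $i=1,\dots,k$ and every one of the $\widetilde{a}_i$ older siblings $u$ of $v_i$, the tree-indexed walk on the descendant subtree of $u$, with the edge $v_{i-1}u$ prepended, avoids $K$ (the requirement $\gamma(0),\dots,\gamma(k-1)\notin K$ being automatic from $\gamma\subseteq K^c$). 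Such a subtree-with-edge, seen from $\gamma(i-1)$, has the law of a snake started at $\gamma(i-1)$ conditioned to have a single child, hence avoids $K$ with probability $\rp(\gamma(i-1))$; and, given the ancestral line together with the sibling counts, all these avoidance events are mutually independent — over $u$ and over $i$ — by the branching property and the independence of the $\theta$-increments. Multiplying these contributions against the probability $\SRW(\gamma)$ that the increments on $v_0v_1,\dots,v_{k-1}v_k$ reproduce $\gamma$ and the probabilities $\mu(\widetilde{a}_i+\widetilde{b}_i+1)$ that $v_{i-1}$ has $\widetilde{a}_i+\widetilde{b}_i+1$ children yields formula \eqref{key1}.

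Next I would sum \eqref{key1} over all $l_i,m_i\in\N$. The sum factorizes over $i$, and for fixed $i$ the inner sum $\sum_{l,m}\mu(l+m+1)(\rp(\gamma(i-1)))^l$ collapses, after reindexing by $n=l+m+1>l$, to $\sum_{l}\widetilde{\mu}(l)(\rp(\gamma(i-1)))^l$, where $\widetilde{\mu}(l)=\sum_{n>l}\mu(n)$ is the adjoint measure. It then remains to recognize $\sum_{l}\widetilde{\mu}(l)(\rp(z))^l$, for $z\notin K$, as $1-\rS(z)$: an adjoint snake launched from $z\notin K$ avoids $K$ precisely when each of its $\widetilde{\mu}$-many root-branches does, and each root-branch is itself a snake from $z$ conditioned to have a single child, so this probability equals the probability generating function of $\widetilde{\mu}$ evaluated at $\rp(z)$. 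Taking the killing function $\KRW=\rS$ as in \eqref{killing} and comparing with the definition \eqref{def-b} of $\BRW(\gamma)$ then gives $\BRW(\gamma)=\SRW(\gamma)\prod_{i=1}^k(1-\rS(\gamma(i-1)))=P(\Snake_x\text{ visits }K\text{ via }\gamma)$, which is \eqref{key}.

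The step I expect to be the main (indeed the only nontrivial) obstacle is the bookkeeping behind \eqref{key1}: one must argue that the event ``$v_k$ is the depth-first-earliest vertex mapped into $K$'' coincides \emph{exactly} with the conjunction of the avoidance events for the path vertices and for the subtrees hanging to the left of the ancestral line, and that these avoidance events are conditionally independent given the ancestral line and the sibling counts. This is where the recursive description of the GW tree and the independence of the $\theta$-increments do the work; once the decomposition is in place, the remaining manipulations are the elementary generating-function identities above.
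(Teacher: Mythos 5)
Your proposal is correct and follows essentially the same route as the paper: conditioning on the sibling counts to get \eqref{key1}, summing out $l_i,m_i$ via the adjoint measure $\widetilde{\mu}$, and identifying $\sum_l\widetilde{\mu}(l)(\rp(z))^l$ with $1-\rS(z)$ so that the product matches the definition of $\BRW(\gamma)$ for the killing function \eqref{killing}. The only difference is that you spell out the depth-first-order bookkeeping and conditional independence behind \eqref{key1}, which the paper leaves as ``from the tree structure, one can see that.''
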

In words, the probability that a snake visits $K$ via $\gamma$ is just $\gamma$'s probability weight according to the random walk with the killing function given by \eqref{killing}. Throughout this work, we will mainly use this killing function and write $G_K(\cdot, \cdot)$ for the corresponding Green function. By summing the last equality over $\gamma$, we get: for any $a\in K$,
\begin{equation}\label{p1}
P(\Snake_x\text{ visits } K \text{ at } a)=\sum_{\gamma:x\rightarrow a}\BRW(\gamma)=G_K(x,a);
\end{equation}
and
\begin{equation}\label{p2}
\pS(x)=P(\Snake_x\text{ visits } K)=\sum_{\gamma:x\rightarrow K}\BRW(\gamma)=G_K(x,K).
\end{equation}
Note that since $\rS(x)=1$ for $x\in K$, when $\gamma$, except for the ending point, intersects $K$, $\BRW(\gamma)=0$.

On the other hand, from the structure of the infinite snake, one can easily see that $\qS(x)$ is just the probability that in this killing random walk, a particle starting at $x$ will be killed at some finite time.

Now we turn to the last visiting point, which can be addressed similarly. When a snake $\Snake_x=(T,\mathcal{S}_T)$ visits $K$, denoted by $\xi_K$, the last vertex in $\{v\in T: \mathcal{S}_T(v)\in K\}$ due to the default order. Assume $(v_0,v_1,\dots,v_k)$ is the unique simple path in $T$ from the root $o$ to $\xi_K$. Define $\overline{\bb}(\Snake_x)=(\mathcal{S}_T(v_0),\mathcal{S}_T(v_1),\dots,\mathcal{S}_T(v_k))$ and say $\Snake_x$ leaves $K$ at $\mathcal{S}_T(v_k)$, via $\overline{\bb}(\Snake_x)$. We would like to compute $P(\overline{\bb}(\Snake_x)=\gamma)$, for any $\gamma=(\gamma(0),\dots,\gamma(k))$ starting from $x$ and ending at $A$ (note that unlike the former case, the interior of $\gamma$ now may intersect $K$). Let $\widetilde{a}_i$ ($\widetilde{b}_i$ respectively) be the number of the older (younger respectively) brothers of $v_i$, for $i=1,\dots,k$. Similar to the former case, one can see that, for any $l_1,\dots,l_k,$ $m_1,\dots,m_k\in\N$,
\begin{multline}\label{key2}
P(\Snake_x \text{ leaves }K \text{ via }\gamma; \widetilde{a}_i=l_i, \widetilde{b}_i=m_i, \text{for }i=1,\dots,k)\\
=\SRW(\gamma)(1-\PS(\gamma(k)))\prod_{i=1}^{k}\left(\mu(l_i+m_i+1)(\hat{r}(\gamma(i-1)))^{m_i}\right),
\end{multline}
where $\hat{r}(z)$ is the probability that a snake starting from $z$ does not visit (except possibly for the root) $K$ conditioned on the initial particle having only one child. Summing up, we get:
\begin{equation}\label{last}
P(\Snake_x \text{ leaves }K \text{ via }\gamma)=\SRW(\gamma)(1-\PS(\widehat{\gamma}))\prod_{i=1}^{k}(1-\RS(\gamma(i-1))).
\end{equation}
If we let the killing function be $\KRW'(x)=\RS(x)$, then the last term  is just $(1-\PS(\widehat{\gamma}))\BRW_{\KRW'}(\gamma)$.

\begin{remark}
We will always use the killing function in \eqref{killing}, except in the proof of \eqref{m1-3}.
\end{remark}

\begin{remark}\label{r1}
Now the reason for the introduction of the adjoint snake and the infinite snakes is clear: in order to understand $\pS(x)$, the probability of visiting $K$, we need to study the random walk with killing where the killing function is just the probability of the adjoint snake visiting $K$.
\end{remark}

\begin{remark}
The computations here are initiated in \cite{Z15}. Note that in this section, we do not need the assumption $d\geq5$.
\end{remark}

\section{Convergence of the Green function.} The goal of this section is to prove:
\begin{lemma}\label{Conver_G}
\begin{equation}\label{Conver-G}
\lim_{x,y\rightarrow \infty}G_K(x,y)/g(x,y)=1.
\end{equation}
\end{lemma}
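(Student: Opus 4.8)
The plan is to compare the killed Green function $G_K(x,y)$ with the free Green function $g(x,y)$ by exploiting the First-Visit Lemma (Lemma~\ref{bd-1}) together with the key identity $G_K(x,K)=\pS(x)=P(\Snake_x\text{ visits }K)$ from \eqref{p2}, which by the preliminary bound \eqref{1bd} tends to $0$ as $x\to\infty$. Since $G_K\leq g$ always (the killing only removes mass), it suffices to bound $G_K(x,y)$ from below by $(1-o(1))g(x,y)$ uniformly as $x,y\to\infty$. Decomposing a path from $x$ to $y$ according to whether it ever enters a large ball $B=\Ball(R)$ containing $K$ (with $R$ fixed but large, and $x,y$ eventually outside $B$), one writes $g(x,y)=G_K(x,y)+\big(g(x,y)-G_K(x,y)\big)$, where the error term $g(x,y)-G_K(x,y)$ counts paths that get killed, i.e. walks that visit $K$. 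Using the First-Visit Lemma for the free walk, the probability-weight of paths from $x$ to $y$ that hit $K$ is
$$
g(x,y)-G_K(x,y)\;\preceq\;\sum_{z\in B^c}\Hm^{B}(x,z)\,g(z,K)\cdot\big(\text{const}\big)\;+\;(\text{contribution through }K),
$$
and more precisely the killed mass is controlled by $\sum_{a\in K} g(x,a)\,g(a,y)/g(\cdot)$-type expressions; the point is that each such term carries a factor $g(\cdot,K)\asymp \|x\|^{2-d}$ or $\|y\|^{2-d}$ that is $o(1)$ relative to $g(x,y)$ when $\|x\|,\|y\|\to\infty$.

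Concretely, I would first fix $K\subseteq\Ball(R_0)$ and only consider $x,y$ with $\|x\|,\|y\|\geq 4R_0$. By Lemma~\ref{bd-1} applied to $B=\Ball(R_0)$ (or rather to $K$ itself), decompose each walk from $x$ to $y$ according to its first visit to $K$:
$$
g(x,y)-G_K(x,y)\;=\;\sum_{z\in K} g(x,z)\,\Hm^{K^c}(z,y)\;\leq\;\sum_{z\in K}g(x,z)\,g(z,y)\;\asymp\;|K|\cdot\|x\|^{2-d}\|y\|^{2-d}\cdot\|x-z\|^{?}.
$$
Here the Green estimate \eqref{green}, $g(w)\sim a_d\|w\|^{2-d}$, gives $g(x,z)\asymp\|x\|^{2-d}$ and $g(z,y)\asymp\|y\|^{2-d}$ for $z\in K$, while $g(x,y)\asymp\|x-y\|^{2-d}$. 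When $\|x-y\|$ is comparable to $\max(\|x\|,\|y\|)$ the ratio $\big(g(x,y)-G_K(x,y)\big)/g(x,y)$ is then $\preceq \min(\|x\|,\|y\|)^{2-d}\to 0$. The genuinely delicate regime is when $x$ and $y$ are both far from $K$ but close to each other, so $\|x-y\|\ll\|x\|\approx\|y\|$; then $g(x,y)$ is large (of order $\|x-y\|^{2-d}$) while the killed mass is still only of order $\|x\|^{2(2-d)}$, which is even smaller, so the ratio again vanishes. One has to check the bound $g(x,z)g(z,y)\preceq \|x\|^{2-d}\|y\|^{2-d}$ is what is actually needed; since $\|x\|,\|y\|\geq 4R_0\geq 4\|z\|$, this follows from \eqref{green}.

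The main obstacle is making the estimate genuinely uniform in the pair $(x,y)$ over all directions and all relative distances, in particular ruling out any conspiracy when $x$ and $y$ are close together but both far from $K$: there one needs $g(x,y)-G_K(x,y)$ to be small \emph{relative to the large quantity} $g(x,y)$, which forces one to use that a walk from $x$ to $y$ hitting $K$ must make an excursion of size $\asymp\|x\|$ and return, costing two factors of $\|x\|^{2-d}$, whereas a direct walk costs only $\|x-y\|^{2-d}$. A clean way to package this is: $g(x,y)-G_K(x,y)=\sum_{z\in K}g(x,z)\Hm^{K^c}(z,y)\leq g(x,K)\cdot\max_{z\in K}g(z,y)$ and, symmetrically using the first-visit-from-$y$ decomposition, also $\leq g(y,K)\cdot\max_{z\in K}g(z,x)$; combining with $g(x,y)\succeq\max(g(x,K),g(y,K))$ (valid since $\|x-y\|\leq\|x\|+\|y\|\preceq\max(\|x\|,\|y\|)$ up to constants once both are $\geq 4R_0$, hmm — this last inequality needs care and is really the crux) yields $\big(g(x,y)-G_K(x,y)\big)/g(x,y)\preceq \min_{z\in K}\big(g(x,z),g(z,y)\big)\to 0$. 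I would isolate and prove that comparison $g(x,y)\succeq c\,g(x,K)$ (respectively $\succeq c\,g(y,K)$) as the one nontrivial lemma — it holds because $\|x-y\|\leq \|x-z\|+\|z-y\|$ and, for $\|x\|,\|y\|\geq 4R_0$, $\|x-y\|\preceq\max(\|x-z\|,\|y-z\|)$ is false in general, so instead one splits into the case $\|x-y\|\leq\|x\|/2$ (then $\|y\|\asymp\|x\|$ and $g(x,y)\geq g(x,z)$ trivially fails too)… the correct split is by whether $\|x-y\|\geq \min(\|x\|,\|y\|)/10$ or not, handling the first case by the two-excursion bound above and the second case by noting $\|x\|\asymp\|y\|\asymp\|x-z\|\asymp\|y-z\|$ so all Green functions involved are comparable and the killed fraction is $\preceq\|x\|^{2-d}/\|x-y\|^{2-d}\leq\|x\|^{2-d}\cdot(10/\min(\|x\|,\|y\|))^{2-d}$ — wait, that is $\geq 1$; so in that second sub-case one argues differently, bounding $G_K(x,y)\geq G_K(x,\Ball(2\|x-y\|))\cdot\min_{w}\Hm(\cdot)$ via Lemma~\ref{pro-g1} applied at scale $\|x-y\|$, which stays away from $K$ entirely when $\|x-y\|\ll\|x\|$, hence equals the free Green function at that scale up to a constant and in fact up to $1-o(1)$ after iterating. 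This last point — pushing a constant-factor comparison at a well-separated scale up to a $(1+o(1))$ comparison — is where the bulk of the work lies, and I expect it to use \eqref{green}, Lemma~\ref{pro-g1}, and a covering/last-exit argument much like in the proof of Lemma~\ref{pro-g1} itself.
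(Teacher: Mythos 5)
There is a genuine gap, and it comes from a misidentification of the object $G_K$. In this paper $G_K$ is \emph{not} the taboo Green function of the walk killed upon hitting $K$; it is the Green function of the walk with the \emph{soft} killing function $\KRW(z)=\rS(z)=P(\Snake'_z\text{ visits }K)$ from \eqref{killing}, which is strictly positive at \emph{every} site of $\Z^d$ (it equals $1$ on $K$ but is $\asymp |K|\,\|z\|^{2-d}$ far away). Consequently your central identity
$g(x,y)-G_K(x,y)=\sum_{z\in K}g(x,z)\Hm^{K^c}(z,y)$
is false: the lost mass is not carried only by paths that touch $K$, but by every path, through the factor $1-\prod_{i}(1-\rS(\gamma(i)))$. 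The real difficulty of the lemma — which your argument never engages — is that a typical path from $x$ to $y$ has length $\asymp\|x-y\|^2$, so even a killing rate of order $\|z\|^{2-d}$ per step can accumulate; one must show that paths which are either too long or which swing too close to $K$ contribute negligibly to $g(x,y)$, and that for the remaining paths the product of $(1-\KRW)$ factors is $1-o(1)$. This is exactly what the paper does (the sets $\Gamma_1$, $\Gamma_2$, the bound $\BRW(\gamma)/\SRW(\gamma)\geq 1-c|K|\,|\gamma|\,\|x\|^{-0.9(d-2)}$, plus a first-visit/overshoot reduction when $\|y\|\gg\|x\|$). Your correct observation that $G_K\leq g$ handles one direction, but the lower bound is where all the work is, and the identity you base it on does not hold.

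A repairable version of your strategy does exist: the resolvent identity for soft killing gives
$g(x,y)-G_K(x,y)=\sum_{z\in\Z^d}G_K(x,z)\,\KRW(z)\,g(z,y)\leq |K|\sum_{z}g(x,z)\,\|z\|^{2-d}\,g(z,y)$,
and a region-by-region estimate of the triple convolution $\sum_z\|x-z\|^{2-d}\|z\|^{2-d}\|z-y\|^{2-d}$ (of the kind carried out at the end of Section 9) shows this is $o(\|x-y\|^{2-d})=o(g(x,y))$ uniformly as $\min(\|x\|,\|y\|)\to\infty$; that would give a proof genuinely different from the paper's path-decomposition argument. But as written, your sum over $z\in K$ only, the ensuing self-contradictory estimates in the regime $\|x-y\|\ll\|x\|$ (which you flag yourself), and the final unproved step of upgrading a constant-factor comparison to $1+o(1)$ "after iterating" mean the proposal does not establish the lemma.
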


\begin{proof}
The part of '$\leq$' is trivial, since $G_K(x,y)\leq g(x,y)$. We need to consider the other part.

First, consider the case $\|x\|/2\leq\|y\|\leq2\|x\|^{1.1}$. Let
\begin{align*}
&\Gamma_1=\{\gamma:x\rightarrow y| |\gamma|\geq \|x\|^{0.1}\cdot \|x-y\|^2\};\\
&\Gamma_2=\{\gamma:x\rightarrow y| \gamma \text{ visits } \Ball(\|x\|^{0.9})\}.\\
\end{align*}
By Lemma \ref{pro-g2}, one can see that $\sum_{\gamma\in \Gamma_1}\SRW(\gamma)/g(x,y)$ tends to 0. Similar to  the First-Visit Lemma, by considering the first visiting place, we have (let $B=\Ball(\|x\|^{0.9})$):
\begin{align*}
\sum_{\gamma\in \Gamma_2}\SRW(\gamma)=&\sum_{a\in B}\Hm^{B^c}(x,a)
g(a,y)\asymp \sum_{a\in B}\Hm^{B^c}(x,a)\|y\|^{2-d}\\
=&P(S_x \text{ visits } B)\cdot \|y\|^{2-d}\asymp  (\|x\|^{0.9}/\|x\|)^{d-2} \|y\|^{2-d}\\
\preceq & \|x\|^{-0.1} \|x-y\|^{2-d}\asymp  \|x\|^{-0.1}g(x,y).\\
\end{align*}
Note that the estimate of $P(S_x \text{ visits } \Ball(r))\asymp (r/\|x\|)^{d-2}$ is standard, and for the second last inequality we use $\|y\|\geq(\|x\|+\|y\|)/3\succeq\|x-y\|$.
Hence, we get $\sum_{\gamma\in \Gamma_2}\SRW(\gamma)/g(x,y)\rightarrow 0$ and therefore,
\begin{equation}\label{a1}
\sum_{\gamma:x\rightarrow y, \; \gamma\notin \Gamma_1\cup\Gamma_2}\SRW(\gamma)\sim g(x,y).
\end{equation}

For any $\gamma:x\rightarrow y, \; \gamma\notin \Gamma_1\cup\Gamma_2$, using \eqref{1bd}, one can see:
\begin{align*}
\BRW(\gamma)/\SRW(\gamma)=&\prod_{i=0}^{|\gamma|-1}\left(1-\KRW(\gamma(i))\right)\geq(1-c|K|/(\|x\|^{0.9})^{d-2})^{|\gamma|}\\
\geq & 1-c|K||\gamma|/(\|x\|^{0.9})^{d-2}
\geq 1-c|K|\|x\|^{0.1}\|x-y\|^2/(\|x\|^{0.9})^{d-2} \\
\geq &1-c|K|\|x\|^{0.1}\|x\|^{2.2}/\|x\|^{0.9\cdot 3}
\geq  1-c|K|/\|x\|^{0.4}\rightarrow 1.\\
\end{align*}
Hence, we have:
\begin{equation*}
\sum_{\gamma:x\rightarrow y, \; \gamma\notin \Gamma_1\cup\Gamma_2}\BRW(\gamma)
\sim \sum_{\gamma:x\rightarrow y, \; \gamma\notin \Gamma_1\cup\Gamma_2}\SRW(\gamma).
\end{equation*}
Combining this and \eqref{a1}, we get: when $\|x\|/2\leq\|y\|\leq2\|x\|^{1.1}$, \eqref{Conver-G} is true.

When $\|y\|>2\|x\|^{1.1}$, we know $g(x,y)\sim a_d\|y\|^{2-d}$. Hence, we need to show: $G_K(x,y)\sim a_d\|y\|^{2-d}$.
Let $r=2\|y\|^{1/1.1}$ and $B=\Ball(r)$. Then for any $a\in \Ball(2r)\setminus \Ball(r)$, $\|x\|<\|a\|<\|y\|\leq 2\|a\|^{1.1}$ (when $\|y\|$ is large). Hence $G_K(a,y)\sim g(a,y)\sim a_d\|y\|^{2-d}$. Applying the First-Visit Lemma, we have:
\begin{align*}
G_K(x,y)=&\sum_{a\in B^c}\Hm^{B}_\KRW(x,a)G_K(a,y)
\geq\sum_{a\in \Ball(2r)\setminus B}\Hm^{B}_\KRW(x,a)G_K(a,y)\\
\sim &\sum_{a\in\Ball(2r)\setminus B}\Hm^{B}_\KRW(x,a) a_d\|y\|^{2-d}\\
=&(\sum_{a\in B^c}\Hm^{B}_\KRW(x,a)-\sum_{a\in (\Ball(2r))^c}\Hm^{B}_\KRW(x,a))a_d\|y\|^{2-d}\\
\geq & ((1-\rS(x))\EsC^+_K(x)-C\frac{r^2}{r^{d}})a_d\|y\|^{2-d}\\
\sim &a_d\|y\|^{2-d}.\\
\end{align*}
In the second last inequality we use the Overshoot Lemma and
$$
\sum_{a\in B^c}\Hm^{B}_\KRW(x,a)\geq\sum_{a\in B^c}\Hm^{B}_\KRW(x,a)(1-\rS(a))\EsC^+_K(a)=(1-\rS(x))\EsC^+_K(x)\rightarrow 1.
$$
Now, we show \eqref{Conver-G} for the case $\|x\|\leq \|y\|$. The case of $\|x\|\geq \|y\|$ can be handled similarly.
\end{proof}
\begin{remark}
As we have seen in the proof, since the jump distribution $\theta$ maybe unbounded, we need an extra step to control the long jump, via the Overshoot Lemma. This happens again and again later. It might be convenient, especially for a first-time reader, to restrict the attention to the jump distribution with finite range.
\end{remark}

\section{Proof of Theorem \ref{MT1}.}
Now we are ready to prove Theorem \ref{MT1}. It is sufficient to prove:
\begin{lemma}Under the same assumption of Theorem \ref{MT1}, we have:
\begin{equation}
P(\Snake_x \text{ visits } K \text { at }a)\sim a_d\|x\|^{2-d}\EsC_K(a);
\end{equation}
\begin{equation}
P(\Snake_x \text{ leaves } K \text { at }a)\sim a_d\|x\|^{2-d}\Esc_K(a);
\end{equation}
whenever the escape probability on the right hand side is nonzero.
\end{lemma}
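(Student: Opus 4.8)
The plan is to prove the first relation, $P(\Snake_x\text{ visits }K\text{ at }a)\sim a_d\|x\|^{2-d}\EsC_K(a)$, and then obtain the second by rerunning the argument with the killing function $\KRW$ replaced by $\KRW'(x)=\RS(x)$ and with the invariant snake in the role of the reversed infinite snake. By \eqref{p1} the first relation is precisely $G_K(x,a)\sim a_d\|x\|^{2-d}\EsC_K(a)$, so I work with the killed Green function $G_K$. Fix $r$ with $K\subseteq\Ball(r)$. Lemma \ref{bd-1} with $B=\Ball(r)$ (note $a\in B$, $x\notin B$) gives the exact last-exit decomposition
$$G_K(x,a)=\sum_{z\notin\Ball(r)}G_K(x,z)\,\Hm^{\Ball(r)}_\KRW(z,a).$$
The first step is to recognise the total mass $p_r(a):=\sum_{z\notin\Ball(r)}\Hm^{\Ball(r)}_\KRW(z,a)$. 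Reversing each path $\gamma:z\to a$ with $\gamma\subseteq\Ball(r)$ and using $\BRW(\gamma)=\SRW(\gamma)\prod_{i=0}^{|\gamma|-1}(1-\rS(\gamma(i)))$, this sum equals $\E\bigl[\prod_{j=1}^{\tau_r}(1-\rS(\omega(j)))\bigr]$, where $\omega$ is a $\Rtheta$-random walk started at $a$ and $\tau_r$ its first exit time from $\Ball(r)$. On the other hand, from the construction of the reversed infinite snake (spine $\omega$, with conditionally independent adjoint-snake bushes, the bush at $\omega(j)$ avoiding $K$ with probability $1-\rS(\omega(j))$, and the bush at the root exempted) one reads off $\EsC_K(a)=\E\bigl[\prod_{j\ge1}(1-\rS(\omega(j)))\bigr]$. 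Since $\tau_r\uparrow\infty$ almost surely and all factors lie in $[0,1]$, monotone convergence yields $p_r(a)\downarrow\EsC_K(a)$ as $r\to\infty$.

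The second step evaluates $\lim_{x\to\infty}\|x\|^{d-2}G_K(x,a)$ from the decomposition by a sandwich. Given $\eps>0$, fix $r$ past the threshold provided by Lemma \ref{Conver_G} for accuracy $\eps$, and a large constant $M$, and split the sum at $\|z\|=\|x\|/M$. On the inner range $r<\|z\|\le\|x\|/M$ one has $\|x-z\|=(1+O(1/M))\|x\|$, hence $g(x,z)=a_d\|x\|^{2-d}(1+O(1/M)+o(1))$ by \eqref{green}, and $G_K(x,z)/g(x,z)\in[1-\eps,1]$ uniformly there by Lemma \ref{Conver_G}; thus $\|x\|^{d-2}G_K(x,z)=a_d(1+O(1/M)+O(\eps)+o(1))$ uniformly on this range. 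On the outer range $\|z\|>\|x\|/M$ one bounds $G_K(x,z)\le g(x,z)\le g(0,0)$ and invokes the Overshoot Lemma, $\sum_{\|z\|>\|x\|/M}\Hm^{\Ball(r)}_\KRW(z,a)\preceq r^2(\|x\|/M-r)^{-d}$, so this part contributes at most $O\bigl(g(0,0)r^2M^d\|x\|^{-2}\bigr)\cdot\|x\|^{2-d}=o(\|x\|^{2-d})$; the Overshoot Lemma also lets one replace $\sum_{r<\|z\|\le\|x\|/M}\Hm^{\Ball(r)}_\KRW(z,a)$ by $p_r(a)$ up to $o(1)$. Combining, $\limsup_{x\to\infty}\|x\|^{d-2}G_K(x,a)\le a_d(1+O(1/M)+O(\eps))p_r(a)$ and $\liminf_{x\to\infty}\|x\|^{d-2}G_K(x,a)\ge a_d(1-O(1/M)-O(\eps))p_r(a)$. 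Letting $M\to\infty$, then $r\to\infty$ (using step one), then $\eps\to0$ gives $\|x\|^{d-2}G_K(x,a)\to a_d\EsC_K(a)$, which is the first relation; in particular this yields $o(\|x\|^{2-d})$ when $\EsC_K(a)=0$, so the stated nonvanishing hypothesis is only a formality.

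For the last visiting point, summing \eqref{last} over $\gamma:x\to a$ gives $P(\Snake_x\text{ leaves }K\text{ at }a)=(1-\PS(a))\,G_{\KRW'}(x,a)$ with $\KRW'=\RS$. The same three moves — the last-exit decomposition from $\Ball(r)$; reversal of paths, which now produces the escape mass of the invariant snake (whose backbone is a $\Rtheta$-walk, now carrying a $\mu$-snake bush at the root) and which tends to $\Esc_K(a)$ as $r\to\infty$; and the Lemma \ref{Conver_G}/Overshoot sandwich — give $P(\Snake_x\text{ leaves }K\text{ at }a)\sim a_d\|x\|^{2-d}\Esc_K(a)$. Here one also notes that Lemma \ref{Conver_G} holds verbatim for the killing $\KRW'$, since its proof uses only $\RS(x)\le\rS(x)\preceq|K|\|x\|^{2-d}$ from \eqref{1bd} and the corresponding survival statement $(1-\RS)\cdot(\text{infinite-snake escape mass})\to1$, both of which carry over.

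The main obstacle is twofold. Conceptually, the heart of the proof is identifying the limit $\lim_{r\to\infty}p_r(a)$ that drops out of the last-exit decomposition with the snake-theoretic escape probability $\EsC_K(a)$ (resp. $\Esc_K(a)$): this is where the precise constructions of the reversed infinite and invariant snakes are used, and where it is essential that one lets $x\to\infty$ before $r\to\infty$ — the $z$-dependence of $G_K(x,z)$ for $z$ at bounded distance from $K$ is exactly what makes the pre-limit mass $p_r(a)$ genuinely depend on $r$, and a careless interchange of limits would force the false conclusion that $p_r(a)$ is constant. Technically, as already in the proof of Lemma \ref{Conver_G}, the unbounded jump distribution is the recurring nuisance: a path can jump directly from scale $\|x\|$ into $\Ball(r)$, so the "far" contribution $\|z\|\gtrsim\|x\|$ cannot be controlled by a naive Green-function comparison (which would even produce a spurious $\log\|x\|$) and must be routed through the Overshoot Lemma and the moment bound \eqref{as1}.
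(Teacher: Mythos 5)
Your proof is correct and follows essentially the same route as the paper's: the last-exit decomposition from a ball around $K$ via the First-Visit Lemma, Lemma \ref{Conver_G} on the bulk range, the Overshoot Lemma on the far range, and the identification (by path reversal) of the remaining harmonic mass with $\EsC_K(a)$ (resp.\ $\Esc_K(a)$), with the second assertion handled by switching to the killing $\KRW'=\RS$. The only difference is bookkeeping: the paper couples the cutoffs to $x$ by taking $r=\|x\|^{\alpha}$, $s=\|x\|^{1-\alpha}$ in a single joint limit, whereas you fix $r$, let $x\to\infty$ first, and then send $r\to\infty$ using the monotone convergence $p_r(a)\downarrow\EsC_K(a)$, a step you make more explicit than the paper does.
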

\begin{proof}
Fix some $\alpha\in(0,2/(d+2))$. Let $r=\|x\|^{\alpha}, s=\|x\|^{1-\alpha}$ and $B=\Ball(r), B_1=\Ball(s)\setminus B$ and $B_2=(\Ball(s))^c$.
Note that our choice of $\alpha$ implies $r^2/s^{d}\ll \|x\|^{2-d}$. Then,
\begin{multline}\label{eq1}
P(\Snake_x \text { visits }K \text{ at }a)\stackrel{\eqref{p1}}{=}\sum_{\gamma: x\rightarrow a}\BRW(\gamma)=\sum_{b\in B^c} G_K(x,b)\Hm^{B}_\KRW(b,a)\\
=\sum_{b\in B_1} G_K(x,b)\Hm^{B}_\KRW(b,a)+\sum_{b\in B_2} G_K(x,b)\Hm^{B}_\KRW(b,a).
\end{multline}
We argue that the first term has the desired asymptotics and the second is negligible:
\begin{align*}
\sum_{b\in B_1} &G_K(x,b)\Hm^{B}_\KRW(b,a)\stackrel{\eqref{Conver-G}}{\sim} a_d\|x\|^{2-d}\sum_{b\in B_1}\Hm^{B}_\KRW(b,a)\\
&\sim a_d\|x\|^{2-d} (\EsC_K(a)-\sum_{b\in B_2}\Hm^{B}_\KRW(b,a))\\
&\stackrel{\eqref{os}}{\preceq} a_d\|x\|^{2-d}(\EsC_K(a)-r^2/s^{d})\sim a_d\|x\|^{2-d}\EsC_K(a);\\
\end{align*}
\begin{align*}
\sum_{b\in B_2} G_K(x,b)\Hm^{B}_\KRW(b,a)\preceq \sum_{b\in B_2}\Hm^{B}_\KRW(b,a)
\stackrel{\eqref{os}}{\preceq} r^2/s^{d}\ll \|x\|^{2-d}.\\
\end{align*}
This completes the proof of the first assertion.
Very similar arguments can be used for the second assertion. Note that due to \eqref{last}, we need to use the killing function $\KRW'(x)=\RS(x)$ and the analogous version of Lemma \ref{Conver_G} for this killing. We leave the details to the reader.
\end{proof}

\section{The asymptotics for $\qS(x)$, $\qrS(x)$ and $\rS(x)$.}
Thanks to Theorem \ref{MT1}, we also can find the exact asymptotics of the visiting probabilities by an adjoint snake, an infinite snake and a reversed infinite snake, i.e. $\rS(x)$, $\qS(x)$ and $\qrS(x)$:
\begin{prop}\label{q-r}
\begin{equation}\label{r-lim}
\rS(x)\sim \frac{ a_d\sigma^2\BCap(K)}{2\|x\|^{d-2}},
\end{equation}
\begin{equation}\label{q-lim}
\qS(x)\sim \frac{t_d\cdot a_d^2\sigma^2\BCap(K)}{2\|x\|^{d-4}},
\end{equation}
\begin{equation}\label{q_lim}
\qrS(x)\sim \frac{t_d\cdot a_d^2\sigma^2\BCap(K)}{2\|x\|^{d-4}},
\end{equation}
where $\sigma^2$ is the variance of $\mu$, $t_d=t_d(\theta)=\int_{t\in\R^d}\|t\|^{2-d}\|h-t\|^{2-d}dt$, and $h\in \R^d$ is any vector satisfying $\|h\|=1$.
\end{prop}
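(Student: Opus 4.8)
The plan is to express each of $\rS$, $\qS$, $\qrS$ in terms of the killed Green function $G_K$ via decomposition according to the structure of the corresponding snake, and then feed in Theorem \ref{MT1} (in the form $G_K(x,a)=P(\Snake_x\text{ visits }K\text{ at }a)\sim a_d\|x\|^{2-d}\EsC_K(a)$) together with the Green-function asymptotics \eqref{green} and \eqref{ngreen}.

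For $\rS(x)$: an adjoint snake starting at $x$ is a single root with a $\widetilde\mu$-distributed number of children, each the root of an independent $\mu$-GW tree (i.e.\ an ordinary snake, translated by the first step). Conditioning on the number of children $N\sim\widetilde\mu$ and on the displacements, one gets $1-\rS(x)=\sum_{N}\widetilde\mu(N)\big(\sum_{y}\theta(y-x)(1-\rp(y))\big)^N$, and a cleaner route is to use \eqref{key1}–\eqref{key}: summing $\BRW(\gamma)$ over all $\gamma$ gives $\pS(x)=G_K(x,K)$, while the same tree bookkeeping with the root's offspring law changed to $\widetilde\mu$ shows $\rS(x)=\sum_{a\in K}\sum_{z\in\Z^d}\theta(z-x)\cdot(\text{expected number of subtrees at }z)\cdots$. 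More directly: the adjoint snake visits $K$ iff at least one of its $\widetilde\mu$-many rooted subsnakes does; since these probabilities are $O(\|x\|^{2-d})\to0$, inclusion–exclusion gives $\rS(x)=\E[N]\cdot\pS(x)(1+o(1))+O(\pS(x)^2)=\tfrac{\sigma^2}{2}\pS(x)+o(\|x\|^{2-d})$, and $\pS(x)=\sum_{a\in K}G_K(x,a)\sim a_d\|x\|^{2-d}\BCap(K)$ by \eqref{m1-1}. This yields \eqref{r-lim}. One technical point: one must control the contribution of the event that two or more subsnakes hit $K$; this is bounded by $\E[N^2]\sup_y\pS(y)^2$ over the relevant range, which is $o(\|x\|^{2-d})$ since $\widetilde\mu$ has finite mean and $\mu$ finite variance — but this requires a uniform-in-$y$ bound $\pS(y)\preceq\|y\|^{2-d}$ for $y$ in a neighborhood of $x$, i.e.\ \eqref{1bd}, plus an overshoot estimate for the first step.

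For $\qS(x)$ (and $\qrS(x)$ symmetrically): recall from Section 5 that $\qS(x)$ is the probability that the killed walk (killing rate $\rS$) started at $x$ is eventually killed, i.e.\ $\qS(x)=\sum_{n\geq0}\E[\rS(S^\KRW_x(n))]$ is not quite right — rather, using the spine decomposition of the infinite snake: the infinite snake is a spine (a $\theta$-random walk $(\SRW(n))_{n\geq0}$) with an independent adjoint $\mu$-GW tree grafted at each spine vertex. It visits $K$ iff the spine itself visits $K$ or one of the grafted adjoint subsnakes visits $K$. The spine-hits-$K$ probability is $O(\|x\|^{2-d})$, negligible at order $\|x\|^{4-d}$, so $\qS(x)=\sum_{n\geq0}\E\big[\RS(S_n)\big](1+o(1))$ up to the multiple-hit correction, where $S_n$ is the backbone walk. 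Now $\RS(y)=\rS(y)$ for $y\notin K$, so $\qS(x)\approx\sum_{n\geq0}\E[\rS(S_n)]=\sum_{y}g(x,y)\rS(y)\sim\tfrac{a_d\sigma^2\BCap(K)}{2}\sum_y g(x,y)\|y\|^{2-d}$ by \eqref{r-lim}. Finally $\sum_y g(x,y)\|y\|^{2-d}\sim a_d\sum_y\|x-y\|^{2-d}\|y\|^{2-d}\sim a_d\|x\|^{4-d}\int_{\R^d}\|t\|^{2-d}\|h-t\|^{2-d}\,dt = a_d t_d\|x\|^{4-d}$ by a Riemann-sum / scaling argument (the integral converges since $2(d-2)>d$ for $d\geq5$, and the tails are controlled by \eqref{as1} and \eqref{green}). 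Combining gives \eqref{q-lim}; \eqref{q_lim} follows identically with the backbone a $\Rtheta$-walk, whose Green function $g^-(x,y)=g(y,x)$ still satisfies \eqref{green}, so the constant $t_d$ is unchanged.

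The main obstacle is making the ``subsnakes grafted along the spine'' argument rigorous at the correct order: one needs (i) a uniform estimate $\rS(y)\preceq\|y\|^{2-d}$ and in fact the sharp asymptotic uniformly enough that $\sum_y g(x,y)\rS(y)$ can be replaced by $\tfrac{a_d\sigma^2\BCap(K)}{2}\sum_y g(x,y)\|y\|^{2-d}$ — here one splits the sum over $y$ into $\|y\|$ small (where $\rS(y)\leq1$ but $g(x,y)\asymp\|x\|^{2-d}$ and the region is of bounded diameter, contributing $O(\|x\|^{2-d})=o(\|x\|^{4-d})$), $\|y\|$ comparable to or larger than $\|x\|$, and the bulk $1\ll\|y\|\ll\|x\|$ where the asymptotic $\rS(y)\sim\tfrac{a_d\sigma^2\BCap(K)}{2}\|y\|^{2-d}$ applies; and (ii) controlling the event that two distinct grafted subsnakes (or a subsnake and the spine) both hit $K$, which by a union bound over pairs of spine vertices $m<n$ costs $\sum_{m,n}\E[\rS(S_m)\rS(S_n)]\preceq\big(\sum_n\E[\rS(S_n)]\big)^2\cdot(\text{something})$ — more carefully, conditioning on $S_m$ and using the Markov property it is $\preceq\sum_m\E[\rS(S_m)\cdot\qS(S_m)]\preceq\|x\|^{4-d}\cdot o(1)$ since $\qS\to0$. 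Assembling these estimates with the overshoot lemma to handle unbounded jumps near $K$ is the bulk of the work; the Riemann-sum evaluation of $\sum_y\|x-y\|^{2-d}\|y\|^{2-d}$ is standard and I would only sketch it.
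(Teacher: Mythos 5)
Your overall architecture matches the paper's: reduce $\rS$ to $\pS$ through the root's offspring law, identify $\qS(x)$ with $\sum_y g(x,y)\rS(y)$ up to corrections, and finish with the convolution asymptotics $\|x\|^{d-4}\sum_y\|x-y\|^{2-d}\|y\|^{2-d}\to t_d$ after splitting off neighborhoods of $0$ and $x$. But there is one concrete gap in your proof of \eqref{r-lim}. You bound the multiple-hit correction for the adjoint snake by $\E[N^2]\sup\pS^2$ with $N\sim\widetilde\mu$ and assert this is finite "since $\widetilde\mu$ has finite mean and $\mu$ finite variance." That is false in general: $\E_{\widetilde\mu}[N^2]=\sum_i i^2\sum_{j>i}\mu(j)\asymp\sum_j j^3\mu(j)$, i.e.\ finiteness of the second moment of $\widetilde\mu$ is a \emph{third}-moment condition on $\mu$, which the paper does not assume. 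The paper avoids this entirely: writing $\widetilde s(x)$ for the probability that a single child-subtree (snake conditioned on the root having one child) hits $K$, one has the exact identities $1-\pS(x)=\sum_i\mu(i)(1-\widetilde s(x))^i$ and $1-\rS(x)=\sum_i\widetilde\mu(i)(1-\widetilde s(x))^i$, and then $\bigl(1-f(t)\bigr)/(1-t)=\sum_n a_n(1+t+\dots+t^{n-1})\uparrow\sum_n na_n$ by monotone convergence, which needs only the first moment of the offspring law. So $\pS\sim\widetilde s$ and $\rS\sim\tfrac{\sigma^2}{2}\widetilde s$ with no second-moment input; your inclusion--exclusion should be replaced by (or rewritten as) this generating-function step. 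A related minor confusion: the child subsnakes of the adjoint snake's root all start \emph{at $x$} (the edge displacement belongs to the subtree), so everything is a function of $\widetilde s(x)$ alone and no uniform-in-$y$ bound or overshoot estimate for the first step is needed there.

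For \eqref{q-lim} your route is sound but slightly heavier than the paper's. You compute the expected number of hitting bushes and then subtract a pair correction $\sum_{m<n}\E[\rS(S_m)\rS(S_n)]\preceq\sum_y g(x,y)\rS(y)\qS(y)=o(\|x\|^{4-d})$; that argument does close (split at $\|y\|\geq M$ where $\qS\leq\eps$, and $\|y\|<M$ where the sum is $O_M(\|x\|^{2-d})$). The paper instead uses an \emph{exact} identity with no correction term: decomposing at the first killing time of the walk with killing rate $\rS$ gives $\qS(x)=\sum_y G_K(x,y)\rS(y)$, and decomposing at the last "flag" gives $\qS(x)=\sum_y g(x,y)\rS(y)\EsC^+_K(y)$; either one feeds directly into the three-region splitting you describe (using Lemma \ref{Conver_G} to replace $G_K$ by $g$ in the bulk). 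For \eqref{q_lim} your observation that the constant is unchanged is correct, but note the exact-identity form becomes $\qrS(x)=\sum_y g(y,x)\rS(y)\EsC_K(y)$, with the reversed Green function and $\EsC_K$ in place of $\EsC^+_K$.
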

\begin{remark}
In fact, $t_d=t_d(\theta)$ has the following form:
$$
t_d=\int_{t\in\R^d}\|t\|^{2-d}\|h-t\|^{2-d}dt=d^{d/2}\sqrt{\det Q}\int_{t\in\R^d}|t|^{2-d}|h'-t|^{2-d}dt,
$$
where $h'\in\R^d$ is any vector with $|h'|=1$ in $\R^d$.
\end{remark}
\begin{proof}
Let $\widetilde{s}(x)$ be the probability that a snake starting from $x$ visits $K$ conditioned on the initial particle having exactly one child. Then it is straightforward to see that: when $x\notin K$,
\begin{equation}\label{p-rr}
1-\pS(x)=\sum_{i\in\N}\mu(i)(1-\widetilde{s}(x))^i,\quad
1-\rS(x)=\sum_{i\in\N}\tilde{\mu}(i)(1-\widetilde{s}(x))^i.
\end{equation}
Note that
$$
\sum_{i\in\N}\mu(i)(1-\widetilde{s}(x))^i\geq\sum_{i\in\N}\mu(i)(1-i\widetilde{s}(x))=1-(\E \mu)\widetilde{s}(x)
$$
and
$$
\sum_{i\in\N}\mu(i)(1-\widetilde{s}(x))^i\leq \mu(0)+\sum_{i=1}^{\infty}\mu(i)(1-\widetilde{s}(x))=1-(1-\mu(0))\widetilde{s}(x)
$$
Hence we have
\begin{equation}\label{rr-p}
\pS(x)\asymp \widetilde{s}(x),
\end{equation}
and similarly one can get $\rS(x)\asymp \widetilde{s}(x)$. Therefore,
\begin{equation}\label{r-p}
\rS(x)\asymp \pS(x).
\end{equation}

We will use the following easy lemma and omit its proof.
\begin{lemma}
Let $(a_n)_{n\in \N}$ be any nonnegative sequence satisfying: $\sum_{n\in\N} a_n=1$ and $\sum_{n\in\N} na_n<\infty$. Let $f(t)=\sum_{n\in\N}a_nt^n$.
Then we have:
\begin{equation*}
\lim_{t\rightarrow 1^-}(1-f(t))/(1-t)=\sum_{n\in\N} na_n.
\end{equation*}
\end{lemma}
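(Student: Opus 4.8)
The plan is to rewrite the difference quotient as a single power series with nonnegative coefficients and then pass to the limit term by term. Since $\sum_{n\in\N}a_n=1<\infty$, the radius of convergence of $f$ is at least $1$, so $f(t)$ is well defined for $t\in[0,1)$, and using $\sum_{n\in\N}a_n=1$ we may write $1-f(t)=\sum_{n\in\N}a_n(1-t^n)$. Next I would apply the elementary factorization $1-t^n=(1-t)\sum_{j=0}^{n-1}t^j$ (interpreting the sum as empty, hence zero, when $n=0$) to get, for every $t\in[0,1)$,
\[
\frac{1-f(t)}{1-t}=\sum_{n\in\N}a_n\sum_{j=0}^{n-1}t^j.
\]

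The conclusion then follows by interchanging $\lim_{t\to1^-}$ with the infinite sum. Each summand $a_n\sum_{j=0}^{n-1}t^j$ is nonnegative and nondecreasing in $t$ on $[0,1)$, and $\sum_{j=0}^{n-1}t^j\uparrow n$ as $t\to1^-$; hence the monotone convergence theorem (for the counting measure on $\N$) gives
\[
\lim_{t\to1^-}\frac{1-f(t)}{1-t}=\sum_{n\in\N}a_n\cdot n=\sum_{n\in\N}na_n,
\]
which is finite by the hypothesis $\sum_{n\in\N}na_n<\infty$. Equivalently, one can invoke dominated convergence with the summable dominating function $n\mapsto na_n$, since $0\le\sum_{j=0}^{n-1}t^j\le n$ for $t\in[0,1)$, or give the bare-hands $\varepsilon/3$ argument: split off the finite head $\sum_{n\le N}$ (on which $t^j\to1$ uniformly) and bound the tail by $\sum_{n>N}na_n$, which is small for $N$ large uniformly in $t\in[0,1)$.

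There is essentially no obstacle here; the lemma is routine. The only point meriting a word of care is the justification of exchanging the limit with the series, which is precisely why the factorization in the first step is worthwhile — after it, the exchange is immediate from monotonicity in $t$ together with the finiteness of $\sum_{n\in\N}na_n$.
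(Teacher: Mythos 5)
Your proof is correct and complete: the factorization $1-t^n=(1-t)\sum_{j=0}^{n-1}t^j$ followed by monotone (or dominated) convergence is exactly the standard argument for this fact. The paper explicitly omits the proof of this lemma, so there is nothing to compare against; your write-up simply supplies the routine justification the author chose to skip.
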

By this lemma and \eqref{p-rr}, we have
$$
\pS(x)\sim \sum_ii\mu(i)\widetilde{s}(x)=\widetilde{s}(x),
$$
$$
\rS(x)\sim \sum_ii\tilde{\mu}(i)\widetilde{s}(x)=\frac{\sigma^2}{2}\widetilde{s}(x).
$$
Hence,
$$
\rS(x)\sim\frac{\sigma^2}{2}\pS(x)\sim\frac{\sigma^2a_d \BCap(K)}{2\|x\|^{d-2}}.
$$

Now we turn to the asymptotic of $\qS(x)$. We point out two formulas for $\qS(x)$:
\begin{equation}\label{for-q2}
\qS(x)=\sum_{y\in \Z^d}G_K(x,y)\rS(y);
\end{equation}
\begin{equation}\label{for-q1}
\qS(x)=\sum_{y\in \Z^d}g(x,y)\rS(y)\EsC^+_K(y).
\end{equation}
The first can be easily derived by considering where the particle dies in the model of random walk with killing function $\rS$. For the second one, we need to consider a bit different but equivalent model: a particle starting from $x$ executes a random walk, but at each step, the particle has the probability $\rS$ to get a flag (instead of to die) and its movements are unaffected by flags. Let $\tau$ and $\xi$ be the first and last time getting flags (if there is no such times then denote $\tau=\xi=\infty$). Note that since $\qS(z)<1$ (when $|z|$ is large), the total number of flags gained is finite, almost surely. Hence $P(\tau<\infty)=P(\xi<\infty)$ and $\qS(x)$ is just the probability that $\xi<\infty$. By considering where the particle gets its last flag, one can get \eqref{for-q1}.

We will use the following easy lemma and omit its proof:
\begin{lemma}
\begin{equation}\label{t-d}
\|x\|^{d-4}\sum_{z\in \Z^d}\frac{1}{\|z\|^{d-2}\|x-z\|^{d-2}}\sim \int_{t\in\R^d}\|t\|^{2-d}\|h-t\|^{2-d}dt.
\end{equation}
\end{lemma}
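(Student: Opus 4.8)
The plan is to recognise the left-hand side of \eqref{t-d} as a Riemann sum. Write $R=\|x\|$ and $h=x/R$, so that $\|h\|=1$. Substituting $z=Rt$, with $t$ ranging over the rescaled lattice $R^{-1}\Z^d$, and using the homogeneity $\|Rt\|=R\|t\|$, $\|x-Rt\|=R\|h-t\|$, one checks that
\begin{equation*}
\|x\|^{d-4}\sum_{z\in\Z^d}\frac{1}{\|z\|^{d-2}\|x-z\|^{d-2}}
=R^{-d}\sum_{t\in R^{-1}\Z^d}\frac{1}{\|t\|^{d-2}\|h-t\|^{d-2}},
\end{equation*}
where the summands at $t=0$ and $t=h$ are interpreted via the convention $\|z\|=1/2$ there; those two terms contribute $O(R^{2-d})\cdot O(R^{d-4})=o(1)$ and so are harmless. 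Since $R^{-d}$ is the covolume of $R^{-1}\Z^d$, the right-hand side is a Riemann sum for $F(h):=\int_{\R^d}\|t\|^{2-d}\|h-t\|^{2-d}\,dt$. The linear change of variables $u=Q^{-1/2}t/\sqrt d$ turns $\|\cdot\|$ into $|\cdot|$ and gives $F(h)=d^{d/2}\sqrt{\det Q}\int_{\R^d}|u|^{2-d}|h'-u|^{2-d}\,du$ with $|h'|=\|h\|=1$; by rotational invariance of the Euclidean integrand this is independent of the unit vector $h$, so it equals the constant $t_d$ (and is finite: the integrand decays like $|t|^{4-2d}$ at infinity, which is integrable since $d\geq5$, and is locally integrable near $t=0$ and $t=h$).

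To make the Riemann-sum convergence precise I would split $\Z^d$ (resp.\ $\R^d$) into a bulk region and three bad regions, estimated uniformly in $R$ and in the direction $h$: (i) the neighbourhood $\{\|z\|\leq\eps R\}$ of $0$, where $\|x-z\|\asymp R$ and $\sum_{\|z\|\leq\eps R}\|z\|^{2-d}\asymp(\eps R)^2$, so the rescaled contribution is $\preceq\eps^2$; (ii) the symmetric neighbourhood $\{\|x-z\|\leq\eps R\}$ of $x$, handled identically; (iii) the tail $\{\|z\|>MR\}$, where for $M\geq2$ one has $\|x-z\|\asymp\|z\|$ and $\sum_{\|z\|>MR}\|z\|^{4-2d}\asymp(MR)^{4-d}$, so the rescaled contribution is $\preceq M^{4-d}$. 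The corresponding integrals over $\{\|t\|\leq\eps\}$, $\{\|h-t\|\leq\eps\}$, $\{\|t\|>M\}$ obey the same bounds $\preceq\eps^2$ and $\preceq M^{4-d}$ (or simply tend to $0$ as $\eps\to0$, $M\to\infty$ by integrability). On the remaining bulk $D_{\eps,M}=\{\,\eps\leq\|t\|\leq M,\ \|h-t\|\geq\eps\,\}$ the integrand is bounded and uniformly continuous, uniformly over $h$ in the compact unit sphere, so $R^{-d}\sum_{t\in D_{\eps,M}\cap R^{-1}\Z^d}\|t\|^{2-d}\|h-t\|^{2-d}\to\int_{D_{\eps,M}}\|t\|^{2-d}\|h-t\|^{2-d}\,dt$ as $R\to\infty$, uniformly in $h$.

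Combining: given $\eta>0$, first fix $\eps$ small and $M$ large so that all bad-region terms (for both the sum and the integral) are $<\eta$; then take $R$ large so that the bulk Riemann sum is within $\eta$ of the bulk integral. This yields $\limsup_{x\to\infty}\bigl|\,\|x\|^{d-4}\sum_{z}\|z\|^{2-d}\|x-z\|^{2-d}-t_d\,\bigr|\leq C\eta$ for every $\eta>0$, proving \eqref{t-d}. I do not expect a genuine obstacle here: the only mild points of care are keeping every estimate uniform in the direction $h=x/\|x\|$ (handled by compactness of the unit sphere) and replacing $|\cdot|$ by $\|\cdot\|$ throughout, which is legitimate because $\|\cdot\|\asymp|\cdot|$ so each annular count and each tail/singularity bound above holds verbatim for $\|\cdot\|$. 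This routineness is presumably why the paper omits the proof.
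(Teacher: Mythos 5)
The paper gives no proof of this lemma (it is stated as an ``easy lemma'' with the proof omitted), so there is nothing to compare against; your Riemann-sum argument is the natural one and is correct. The rescaling $z=Rt$, the $o(1)$ treatment of the two exceptional lattice points via the convention $\|0\|=1/2$, the three uniform bad-region bounds $\preceq\eps^2$, $\preceq\eps^2$, $\preceq M^{4-d}$ (the last using $d\geq5$), and the compactness argument for uniformity in the direction $h=x/\|x\|$ together give a complete proof, and the change of variables $u=Q^{-1/2}t/\sqrt d$ correctly identifies the limit with the $h$-independent constant $t_d$ as in the paper's remark.
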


For the asymptotics of $\qS(x)$, one can use either \eqref{for-q2} or \eqref{for-q1} and the process is similar to each other. Here we use \eqref{for-q2}. Let $B=\Ball(r)$ and $r$ be very large. Divide the right hand site of \eqref{for-q2} into three parts: $\sum_{z\in B}$, $\sum_{z\in x+B}$ and $\sum_{z\notin B\cup x+B}$. We will argue that the first two parts are negligible compared to $\|x\|^{4-d}$ and the third term has the desired asymptotics.
For the first part, we have:
\begin{multline*}
\|x\|^{d-4}\sum_{y\in B}G_K(x,y)\rS(y)\preceq \|x\|^{d-4}\sum_{y\in B}g(x,y)\cdot 1\\
\preceq\|x\|^{d-4}\sum_{y\in B}\frac{1}{(\|x\|-r)^{d-2}}\preceq\|x\|^{d-4}r^d/(\|x\|-r)^{d-2}\rightarrow 0\;(\text{when }x\rightarrow \infty).\\
\end{multline*}
For the second part, we have:
\begin{multline*}
\|x\|^{d-4}\sum_{y\in x+B}G_K(x,y)\rS(y)\preceq \|x\|^{d-4}\sum_{y\in x+B} 1\cdot \rS(y)\\
\stackrel{\eqref{1bd}}{\preceq}\|x\|^{d-4} \sum_{y\in x+B}|K|\|y\|^{2-d}
\leq\|x\|^{d-4}r^d|K|/(\|x\|-r)^{d-2}\rightarrow 0.\\
\end{multline*}
When $r$ and $\|x\|$ are large and $y\notin B\cup(x+B)$, the ratio between $G_K(x,y)\rS(y)$ and $a_d\|x-y\|^{2-d}a_d\sigma^2\BCap(K)\|y\|^{2-d}/2$ is very close to 1. On the other hand,
\begin{align*}
&\|x\|^{d-4}\sum_{y\notin B\cup(x+B)}a_d\|x-y\|^{2-d}a_d\sigma^2\BCap(K)\|y\|^{2-d}/2\\
=&a_d^2\sigma^2\BCap(K)/2\cdot\|x\|^{d-4}\sum_{y\notin B\cup(x+B)}\|x-y\|^{2-d}\|y\|^{2-d}\\
=&a_d^2\sigma^2\BCap(K)/2 \cdot(\|x\|^{d-4}\sum_{y\in\Z^d}\|x-y\|^{2-d}\|y\|^{2-d}-\\
&\|x\|^{d-4}\sum_{y\in(B\cup x+B)}\|x-y\|^{2-d}\|y\|^{2-d}).\\
\end{align*}
By \eqref{t-d}, the first term in the bracket tends to $t_d$. Similar to the estimate for the first two parts, one can verify that
$$
\|x\|^{d-4}\sum_{y\in(B\cup x+B)}\|x-y\|^{2-d}\|y\|^{2-d}\preceq\|x\|^{d-4}r^d/(\|x\|-r)^{d-2}\rightarrow 0.
$$
To sum up, we get
$$
\|x\|^{d-4}\sum_{y\in \Z^d}G_K(x,y)\rS(y)\sim a_d^2\sigma^2\BCap(K)\cdot t_d/2.
$$
This completes the proof of \eqref{q-lim}.

\eqref{q_lim} can be obtained in a very similar way and we leave the details to the reader. Note that one shall care about whether to use the original walk and the reversed walk. For example, instead of \eqref{for-q1}, we have:
\begin{equation}\label{for-qq}
\qS^-(x)=\sum_{y\in \Z^d}g(y,x)\rS(y)\EsC_K(y).
\end{equation}
\end{proof}

\begin{remark}
The analogous result also holds for branching random walk conditioned on survival and can be proved similarly. If we write $\overline{\Snake}^\infty_x$ for the branching random walk conditioned on survival starting from $x$, then we have:
\begin{equation}
\lim_{x\rightarrow \infty}\|x\|^{d-4}\cdot P(\overline{\Snake}^\infty_x\text{ visits K})=t_d a_d^2\sigma^2\BCap(K).
\end{equation}
\end{remark}

\section{Convergence of the conditional entering measure.}

Theorem \ref{MT1} implies that conditioned on visiting a finite set, the first visiting point and the last visiting point converge in distribution as the starting point tends to infinity. In fact, not only the first and last visiting points, but also the set of 'entering' points converge in distribution. Let us make this statement precise.

As before, we fix a $K\ssubset \Z^d$. Let $\Mp(K)$ stand for the set of all finite point measures on $K$. The entering measure of a finite snake $\Snake_x=(T, \Snake_T)$ is defined by:
$$
\Theta_x=\sum_{v\in T:\Snake _T(v)\in K, v \text{ has no ancestor lying in } K}\delta_{\Snake _T(v)}.
$$
Note that $\Theta_x$ is a random element in $\Mp(K)$ and
$$
P(\Theta_x\neq 0)=P(\Snake_x \text{ visits } K)\leq c_K |x|^{2-d},\quad
E(\langle \Theta_x,1\rangle)\leq g(x, K)\leq c_K |x|^{2-d}.
$$
We write $\widetilde{\Theta}_x$ for $\Theta_x$ conditioned on the initial particle having exactly one child and $\overline{\Theta}_x$ for $\Theta_x$ conditioned on $\Theta_x\neq0$.
Now we can state our result:
\begin{theorem}\label{T-hm}
\begin{equation}
\overline{\Theta}_x\stackrel{d}{\rightarrow}\hm_K, \;\text{as }|x|\rightarrow \infty,
\end{equation}
where $\hm_K$ is defined later in \eqref{def-hm} and $\stackrel{d}{\rightarrow}$ means convergence in distribution.
\end{theorem}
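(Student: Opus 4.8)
The plan is to prove convergence of the associated generating functionals. Since $K$ is finite, $\Mp(K)$ is identified with $\N^K$, a law on it is determined by its joint probability generating function, and convergence of these generating functions on $[0,1]^K$ is equivalent to convergence in distribution. For $g\in[0,1]^K$ (extended by $g\equiv0$ off $K$), set $h_g(x)=\E\big[\prod_{v\in T:\ v\text{ entering}}(1-g(\Snake_T(v)))\big]$ and $u_g(x)=1-h_g(x)\ge0$; one checks directly that $\E\big[\prod_{a\in K}(1-g(a))^{\Theta_x(\{a\})}\mid\Theta_x\neq0\big]=1-u_g(x)/\pS(x)$. So it suffices to show that $\lim_{x\to\infty}u_g(x)/\pS(x)$ exists for each $g$; one then obtains $\hm_K$ as the measure whose generating functional is $1$ minus this limit (this is \eqref{def-hm}), which is legitimate as a pointwise limit of generating functionals.

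First I would record the key identity, from the first-entry decomposition of Section 5. Decomposing $\Snake_x$ along the spine $\bb(\Snake_x)=\gamma$ to the first entering vertex $\tau_K$, and noting that conditionally on $\gamma$ the entering measure is $\delta_{\widehat\gamma}$ plus independent copies — based at the spine positions $\gamma(0),\dots,\gamma(|\gamma|-1)$, with multiplicities governed by $\mu$ — of the entering measures of one-child snakes (the younger-brother subtrees hanging off the spine), the same bookkeeping with $\mu$ as in \eqref{key1} gives, for $w\notin K$,
\begin{equation*}
h_g(w)=(1-\pS(w))+\sum_{\gamma:\,w\to K,\ \gamma\subseteq K^c}\SRW(\gamma)\,(1-g(\widehat\gamma))\prod_{i=1}^{|\gamma|}A_g(\gamma(i-1)),
\end{equation*}
where $\widetilde h_g(z)=\sum_{w}\theta(w-z)h_g(w)$ (with $h_g(w):=1-g(w)$ for $w\in K$) and $A_g(z)=\sum_{l,m\ge0}\mu(l+m+1)\rp(z)^l\,\widetilde h_g(z)^m\in[0,1]$. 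Writing $A_g=1-\KRW_g$ with $\KRW_g\ge\KRW=\rS$, the product against $\SRW(\gamma)$ is the killed-walk weight $\BRW_{\KRW_g}(\gamma)$, so $u_g(w)=\pS(w)-\sum_{a\in K}(1-g(a))G_{\KRW_g}(w,a)$; decomposing at the last point where the extra killing $\KRW_g-\rS$ acts then yields
\begin{equation*}
u_g(w)=\sum_{a\in K}g(a)\,G_{\KRW_g}(w,a)+\sum_{z\in\Z^d}G_K(w,z)\,\frac{\KRW_g(z)-\rS(z)}{1-\rS(z)}\sum_{a\in K}(1-g(a))\,G_{\KRW_g}(z,a).
\end{equation*}

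What remains is analytic. From $u_g\le\pS$ and $1-t^m\le m(1-t)$ together with $\mathrm{Var}(\mu)<\infty$, one gets $\KRW_g(z)-\rS(z)\preceq1-\widetilde h_g(z)=\sum_w\theta(w-z)u_g(w)\preceq\sum_w\theta(w-z)\pS(w)$, which for $z$ away from $K$ is $\preceq\|z\|^{2-d}$ by \eqref{1bd} and \eqref{as1}; since also $G_{\KRW_g}(z,a)\le g(z,a)\preceq\|z\|^{2-d}$ and $d\ge5$ makes $\sum_z\|z\|^{4-2d}<\infty$, the second sum above is finite, of order $\|w\|^{2-d}$, and concentrated near $K$. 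One also needs, for each fixed $z$, $G_K(x,z)\sim a_d\|x\|^{2-d}\kappa(z)$ with $\kappa(z)=\EsC_K(z)$ for $z\in K$ and $\kappa$ bounded; this is proved exactly as Theorem \ref{MT1}, decomposing at the first entry into $\Ball(R)$ for slowly growing $R$, using Lemma \ref{Conver_G} on the intermediate annulus and the Overshoot Lemma (Lemma \ref{overshoot}) to drop long jumps. Then a dominated-convergence argument — splitting $\sum_z$ into $\|z\|\le\|x\|/2$ (where $G_K(x,z)/(a_d\|x\|^{2-d})$ is bounded by a constant and $\to\kappa(z)$), $\|z\|\asymp\|x\|$ and $\|z\|\ge2\|x\|$ (both $o(\|x\|^{2-d})$ when $d\ge5$) — together with $\pS(x)\sim a_d\|x\|^{2-d}\BCap(K)$ identify the limit
\begin{equation*}
\lim_{x\to\infty}\frac{u_g(x)}{\pS(x)}=\frac1{\BCap(K)}\Big(\sum_{a\in K}g(a)\EsC_K(a)+\sum_{z\in\Z^d}\kappa(z)\,\frac{\KRW_g(z)-\rS(z)}{1-\rS(z)}\sum_{a\in K}(1-g(a))G_{\KRW_g}(z,a)\Big),
\end{equation*}
and matching this with the constructive form \eqref{def-hm} completes the proof. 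The principal obstacle I anticipate is that $\KRW_g$ (equivalently $\widetilde h_g$) is defined only implicitly, through the nonlinear fixed-point equation for $h_g$, and $g$ is not small, so no linearization in $g$ is available: one must work with the exact identity and argue solely through positivity/monotonicity and the a priori bound $\KRW_g(z)-\rS(z)\preceq\sum_w\theta(w-z)\pS(w)$, uniform in $g\in[0,1]^K$. The secondary difficulty is making the pointwise asymptotics of $G_K(x,\cdot)$ quantitative enough — a single dominating function summable against $(\KRW_g-\rS)G_{\KRW_g}$ — to exchange limit and infinite sum over $z$, handling unbounded jumps via the Overshoot Lemma as is done repeatedly elsewhere in the paper.
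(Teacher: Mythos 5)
Your route---joint probability generating functionals on $[0,1]^K$ rather than a direct comparison of laws---is genuinely different from the paper's, and its analytic core is viable: the fixed-point identity for $h_g$, the bound $\KRW_g(z)-\rS(z)\preceq\sum_w\theta(w-z)u_g(w)\preceq\sum_w\theta(w-z)\pS(w)$ (using $\sum_{l,m}m\,\mu(l+m+1)=\sigma^2/2<\infty$), the resolvent expansion, and the dominated-convergence step over $z$ all can be made to work for $d\geq5$. (One algebraic slip: in the last-point decomposition, with $G_K(w,z)$ on the left and $G_{\KRW_g}(z,a)$ on the right the denominator should be $1-\KRW_g(z)$, not $1-\rS(z)$; harmless, since $1-\KRW_g\geq1-\mu(0)>0$ off $K$.) The paper instead writes $\overline{\Theta}_x$ exactly as the mixture $\bigsqcup_{\gamma:x\rightarrow K}(\BRW(\gamma)/\pS(x))\cdot\Zm(\gamma)$ over spine paths, compares it in total variation with a truncation $\hm_K^n$ of the limit obtained by cutting each path at its last exit from $\Ball(n)$ with $n=\|x\|^{(d-1)/d}$, and shows via \eqref{tt1}--\eqref{tt2} and a Green-function triple convolution that the initial path segment contributes $o(1)$ expected entering mass. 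What the paper's route buys is that convergence to the \emph{specific} measure \eqref{def-hm} is automatic; what yours buys is that no mesoscopic scale needs to be tuned.

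The genuine gap is the final identification. The theorem asserts convergence to the measure of \eqref{def-hm}: a path $\gamma$ sampled from the Doob-transformed reversed walk $P^\infty_K$, followed by $\Zm(\gamma)=\sum_i\Lambda(\gamma(i))$. To close your argument you must compute the generating functional of that object, namely $\sum_{a\in K}\frac{\EsC_K(a)}{\BCap(K)}E_a^\infty\big[\prod_i\phi_{\gamma(i)}(g)\big]$ with $\phi_z(g)=(1-\KRW_g(z))/(1-\rS(z))$ for $z\notin K$, telescope the $\EsC_K$ and $(1-\rS)$ factors in $P^\infty$ against these ratios to convert it into an escape-type quantity for the $\KRW_g$-killed reversed walk, and then apply a last-exit decomposition to match your limit expression. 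That is a computation of length comparable to everything else you wrote, and ``matching with the constructive form'' cannot be waved through---without it you have only shown convergence to \emph{some} law characterized implicitly through $\KRW_g$. Two smaller points: (i) pointwise convergence of generating functions on $[0,1]^K$ identifies the limit only as a possibly defective law, so you should add that $E[\langle\Theta_x,1\rangle\mid\Theta_x\neq0]\leq g(x,K)/\pS(x)\preceq1$ gives tightness and hence a proper limit; (ii) $\KRW_g$ must be set to $1$ on $K$ (equivalently, all path sums restricted to $\gamma\subseteq K^c$) for $G_{\KRW_g}(w,a)$ to equal the spine sum you intend.
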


\begin{remark}
We will introduce many notations, later this section, which will be only used for the proof of Theorem \ref{T-hm}. The reader may wish to skip this section at first reading. All results in this section are not needed for the rest of the paper.
\end{remark}

\subsection{Construction of the limiting measure.}
There are two steps needed, to sample an element from $\hm_K$.  The first step is to sample the 'left-most' path ($\bb(\Snake_x)$) appeared in Section 5 and then run independent branching random walks from all vertices on that path.

We begin with the second step. Inspired by \eqref{key1}, we introduce the position-dependent distribution $\mu_x$ on $\N$ and random variable $\Lambda_x$ on $\Mp(K)$:
\begin{equation}\label{mu-x}
\mu_x(m)=\sum_{l\geq0}\mu(l+m+1)(\rp(x))^l/(1-\rS(x)),\quad \text{for } x\notin K,
\end{equation}
$$
\Lambda_x\stackrel{d}{=}
\left\{
\begin{array}{ll}
\Sigma_{i=1}^{N}X_i, &\text{when }x\notin K;\\
\delta_x, &\text{when }x\in K;\\
\end{array}
\right.
$$
where $N$ is an independent random variable with distribution $\mu_x$ and $X_i$ are i.i.d. with distribution $\widetilde{\Theta}_x$. Note that
$$
\lim_{x\rightarrow\infty}\mu_x(m)= \tilde{\mu}(m), \quad
\mu_x(m)\leq \tilde{\mu}(m)/(1-\rS(x))\preceq \tilde{\mu}(m),
$$
$$
E\langle\Lambda_x,1\rangle=(E\mu_x)E(\langle \Theta_x,1\rangle)\leq c_K |x|^{2-d}.
$$

For any path $\gamma$, define $\Zm(\gamma)$ and $\Zmm(\gamma)$ by:
$$
\Zm(\gamma)=\Sigma_{i=0}^{|\gamma|}\Lambda(i); \Zmm(\gamma)=\Sigma_{i=0}^{|\gamma|-1}\Lambda(i),
$$
where $\Lambda(i)\stackrel{d}{=}\Lambda_{\gamma(i)}$ are independent random variables. Note that
$$
E\langle\Zm(\gamma),1\rangle\leq c_K\Sigma_{i=0}^{|\gamma|}|\gamma(i)|^{2-d}.
$$
Hence, for an infinite path $\gamma:\N\rightarrow\Z^d$, we can also define $\Zm(\gamma)$:
$$
\Zm(\gamma)=\sum_{i=0}^{\infty}\Lambda(\gamma(i))\in \Mp(K) \;a.s.,
$$
as long as
\begin{equation}\label{con-inf}
\sum_{i=0}^{\infty}|\gamma(i)|^{2-d}<\infty.
\end{equation}

Now we move to the first step and explain how to sample the left-most path. For any $x\in \Z^d$, let $h(x)=P(\Snake^-_x \text{ does not visit } K)$. Define $P^\infty$ to be the transition probability of the Markov chain in $\{z\in\Z^d:\EsC_K(z)>0\}$ by:
\begin{equation}
P^\infty(x,y)=\frac{\theta(x-y)h(y)}{\sum_{z\in\Z^d}\theta(x-z)h(z)}=\frac{\theta(x-y)(1-\KRW(y))\EsC_K(y)}{\EsC_K(x)}.
\end{equation}
For any $x$ with $\EsC_K(x)>0$, define $P_x^\infty$ to be the law of random walk starting from $x$ with transition probability $P^{\infty}$.
Define $P^{\infty}_K$ to be the law of random walk (with transition probability $P^{\infty}$) starting at $a\in K$ with probability $\EsC_K(a)/\BCap(K)$.

Now we can give the definition of $\hm_K$:
\begin{equation}\label{def-hm}
\hm_K=\mathrm{~the~law~of~}Z:\mathrm{~where~first~sample~}\gamma\mathrm{~by~}
P^\infty_K\mathrm{~and~then~sample~}Z\mathrm{~by~}\Zm(\gamma).
\end{equation}
Note that under $P^\infty_x$ (for those $x$ with $\EsC_K(x)>0$),
\begin{align*}
E_x^\infty&\sum_{i=0}^{\infty}|\gamma(i)|^{2-d}=E_x^\infty\sum_{z\in \Z^d}
\sum_{i\in\N}\mathbf{1}_{\gamma(i)=z}|z|^{2-d}=\sum_{z\in \Z^d}|z|^{2-d}E_x^\infty
\sum_{i\in\N}\mathbf{1}_{\gamma(i)=z}\\
&=\sum_{z\in \Z^d}|z|^{2-d}\frac{G_K(z,x)\EsC_K(z)}{\EsC_K(x)}\preceq
\frac{1}{\EsC_K(x)}\sum_{z\in \Z^d}|z|^{2-d}|z-x|^{2-d}\preceq\frac{|x|^{4-d}}{\EsC_K(x)}<\infty.\\
\end{align*}
Therefore, under $P_x^{\infty}$ (and hence $P_K$), $\Zm(\gamma)$ is well-defined a.s..

\subsection{Convergence of the conditional entering measure.}
Since our sample space $\Mp(K)$ is discrete and countable, it is convenient to use the total variation distance. Recall that for two probability distributions $\nu_1,\nu_2$ on a discrete countable space $\Omega$, the total variation distance is defined to be
\begin{equation*}
d_{TV}(\nu_1,\nu_2)=\frac{1}{2}\sum_{\omega\in\Omega}|\nu_1(\omega)-\nu_2(\omega)|\in[0,1]
\end{equation*}
and $\nu_n\stackrel{d}{\rightarrow}\nu$ iff $d_{TV}(\nu_n,\nu)\rightarrow0$.

Let us introduce some notations. Let $\Gamma$ be a countable set of finite paths. For each $\gamma\in\Gamma$, assign to it, the weight $a(\gamma)\geq0$ (assume that the total mass $\sum_{\gamma\in\Gamma}a(\gamma)\leq1$) and a probability law $Z(\gamma)$ in $\Mp(K)$. We denote by $\bigsqcup_{\gamma\in \Gamma}a(\gamma)\cdot Z(\gamma)$ for the random element in $\Mp(K)$ as follows: pick a random path $\gamma'$ among $\Gamma$ with probability $P(\gamma'=\gamma)=a(\gamma)$ (with probability $1-\sum_{\gamma\in\Gamma}a(\gamma)$ we do not get any path and in this case simply set $\bigsqcup_{\gamma\in \Gamma}a(\gamma)\cdot Z(\gamma)=0$) and then use the law $Z(\gamma')$ to sample $\bigsqcup_{\gamma\in \Gamma}a(\gamma)\cdot Z(\gamma)$. We also write $\bigsqcup_{\gamma\in \Gamma}a(\gamma)\cdot Z(\gamma)$ for its law (one can judge by the text when the notation appears). One can easily verify the following proposition:
\begin{prop}\label{prop-TV}
If $\nu=\bigsqcup_{\gamma\in \Gamma}a(\gamma)\cdot Z(\gamma)$, $\nu_1=\bigsqcup_{\gamma\in \Gamma}a_1(\gamma)\cdot Z(\gamma)$ and $\nu_2=\bigsqcup_{\gamma\in \Gamma}a(\gamma)\cdot Z_1(\gamma)$, then
\begin{equation}
d_{TV}(\nu,\nu_1)\leq\sum_{\gamma\in \Gamma}|a(\gamma)-a_1(\gamma)|,\quad
d_{TV}(\nu,\nu_2)\leq\sum_{\gamma\in \Gamma}a(\gamma)d_{TV}(Z(\gamma),Z_1(\gamma)).
\end{equation}
\end{prop}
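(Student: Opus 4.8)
The plan is to prove both inequalities by a direct computation with the pointwise masses on the countable space $\Mp(K)$, together with the triangle inequality; one could instead exhibit an explicit coupling (the maximal coupling of the two path-selection laws for the first bound, and the identity coupling of the chosen paths followed by the maximal couplings of $Z(\gamma)$ and $Z_1(\gamma)$ for the second), but the density computation is shorter. First I would record that, writing $A=\sum_{\gamma\in\Gamma}a(\gamma)$ and similarly $A_1$, the recipe defining $\bigsqcup$ gives for every $\omega\in\Mp(K)$
\[
\nu(\omega)=\sum_{\gamma\in\Gamma}a(\gamma)\,Z(\gamma)(\omega)+(1-A)\,\mathbf{1}[\omega=0],
\]
together with the analogous formulas for $\nu_1$ (weights $a_1$, same laws $Z(\gamma)$) and $\nu_2$ (weights $a$, laws $Z_1(\gamma)$).

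For $d_{TV}(\nu,\nu_1)$ I would subtract the two densities, obtaining $\nu(\omega)-\nu_1(\omega)=\sum_\gamma(a(\gamma)-a_1(\gamma))Z(\gamma)(\omega)-(A-A_1)\mathbf{1}[\omega=0]$, bound $|\nu(\omega)-\nu_1(\omega)|$ termwise by the triangle inequality, sum over $\omega$ using $\sum_\omega Z(\gamma)(\omega)=1$, and finally use $|A-A_1|\le\sum_\gamma|a(\gamma)-a_1(\gamma)|$; this yields $\sum_\omega|\nu(\omega)-\nu_1(\omega)|\le 2\sum_\gamma|a(\gamma)-a_1(\gamma)|$, which is the first claim after halving. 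For $d_{TV}(\nu,\nu_2)$ the defect terms $(1-A)\mathbf{1}[\omega=0]$ cancel identically since the path-selection weight is the same, leaving $\nu(\omega)-\nu_2(\omega)=\sum_\gamma a(\gamma)(Z(\gamma)(\omega)-Z_1(\gamma)(\omega))$; summing absolute values over $\omega$ and recognizing $\tfrac12\sum_\omega|Z(\gamma)(\omega)-Z_1(\gamma)(\omega)|=d_{TV}(Z(\gamma),Z_1(\gamma))$ gives the second claim.

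I do not expect any genuine obstacle: this is an elementary stability property of countable mixtures. The only point needing mild care is that the ``no path chosen'' outcome deposits the leftover mass $1-A$ on the atom $0\in\Mp(K)$, which some of the $Z(\gamma)$ may also charge; but this overlap is absorbed harmlessly by the triangle inequality and does not affect the stated constants.
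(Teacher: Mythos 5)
Your proof is correct, and since the paper offers no proof ("One can easily verify..."), your direct computation with the pointwise masses — writing $\nu(\omega)=\sum_{\gamma}a(\gamma)Z(\gamma)(\omega)+(1-A)\mathbf{1}[\omega=0]$, applying the triangle inequality termwise, and noting the cancellation of the leftover-mass atoms in the second bound — is exactly the elementary verification the paper intends. No gaps.
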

For any $n>\Rad(K)$, write:
$$
\hm_K^n=\bigsqcup_{\gamma:(\Ball(n))^c\rightarrow K,\gamma\subseteq (\Ball(n)\setminus K)}\frac{\BRW(\gamma)\EsC_K(\gamma(0))}{\BCap(K)}\cdot\Zm(\gamma).
$$
Note that $\hm_K^n$ can be defined equivalently as follows: first sample a infinite path $\gamma'$ by $P^{\infty}_K$ and cut $\gamma'$ into two pieces at the hitting time of $(\Ball(n))^c$; let $\gamma$ be the first part and then sample $\hm_K^n$ by $\Zm(\gamma)$.  Hence, we have: $\hm_K^n\stackrel{d}{\rightarrow}\hm_K$ as $n\rightarrow\infty$.

Now we turn to $\overline{\Theta}_x$. Similar to the computations after \eqref{key1}, one can get, for $\gamma=(\gamma(0),\dots, \gamma(k))\subseteq K^c$ with $\gamma(0)=x, \widehat{\gamma}=\gamma(k)\in K$ and $1\leq j_1< j_2\leq k$, (see the corresponding notations there)
\begin{equation*}
P(\widetilde{b}_{j_1}=m|\Gamma(\Snake_x)=\gamma)=
\frac{\sum_{l\in\N}\mu(l+m+1)(\rp(\gamma(j_1-1)))^l}{1-\rS(\gamma(j_1-1))};
\end{equation*}
\begin{multline*}
P(\widetilde{b}_{j_i}=m_i, \text{for }i=1,2|\Gamma(\Snake_x)=\gamma)=\\
\frac{\sum_{l\in\N}\mu(l+m_1+1)(\rp(\gamma(j_1-1)))^l}{1-\rS(\gamma(j_1-1))}
\frac{\sum_{l\in\N}\mu(l+m_2+1)(\rp(\gamma(j_2-1)))^l}{1-\rS(\gamma(j_2-1))}.
\end{multline*}
From these (and the similar equations for more than two $b_j$'s), one can see that conditioned on the event $\Gamma(\Snake_x)=\gamma$, $(\widetilde{b}_j)_{j=1,\dots,k}$ are independent and have the distribution of the form in \eqref{mu-x}. Hence, conditioned on $\Gamma(\Snake_x)=\gamma$, $\Theta_x$ has the law of $\Zm(\gamma)$. Therefore, we have
\begin{prop}
\begin{equation}
\overline{\Theta}_x=\bigsqcup_{\gamma:x\rightarrow K}\frac{\BRW(\gamma)}{\pS(x)}\cdot \Zm(\gamma).
\end{equation}
\end{prop}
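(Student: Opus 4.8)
The plan is to condition on the left-most path $\Gamma(\Snake_x)=\bb(\Snake_x)$ introduced in Section 5 and read off both the mixing weights and the conditional law of $\Theta_x$ from the identities already established; this turns the statement into a bookkeeping exercise in the definition of $\bigsqcup$. If $\pS(x)=0$ there is nothing to prove, so assume $\pS(x)>0$.

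First I would fix the weights. By Proposition \ref{key-1} (formula \eqref{key}), $P(\Gamma(\Snake_x)=\gamma)=\BRW(\gamma)$ for every finite path $\gamma:x\to K$ (both sides vanishing when the interior of $\gamma$ meets $K$), and $\sum_{\gamma:x\to K}\BRW(\gamma)=\pS(x)$ by \eqref{p2}. Since the events $\{\Gamma(\Snake_x)=\gamma\}$ are disjoint and their union is $\{\Snake_x\text{ visits }K\}=\{\Theta_x\neq 0\}$, conditioning on $\Snake_x$ visiting $K$ produces the path $\gamma$ with probability exactly $\BRW(\gamma)/\pS(x)$, and these probabilities sum to $1$. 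This identifies the weights appearing in $\bigsqcup_{\gamma:x\to K}\frac{\BRW(\gamma)}{\pS(x)}\cdot\Zm(\gamma)$.

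Next I would compute the conditional law of $\Theta_x$ given $\Gamma(\Snake_x)=\gamma=(\gamma(0),\dots,\gamma(k))$. Let $(v_0,\dots,v_k)$ be the corresponding spine in the tree $T$, and let $\widetilde{a}_i,\widetilde{b}_i$ denote the numbers of older and younger brothers of $v_i$. On $\{\Gamma(\Snake_x)=\gamma\}$ the vertex $v_k$ is the first vertex of $T$ mapped into $K$, so $\gamma(0),\dots,\gamma(k-1)\notin K$, every subtree hanging from an older brother of some $v_i$ avoids $K$, and every vertex strictly below $v_k$ has the ancestor $v_k\in K$. Hence the only entering vertices of $\Snake_x$ are $v_k$ itself, contributing $\delta_{\gamma(k)}$, together with the entering vertices of the subtrees rooted at the younger brothers of $v_1,\dots,v_k$. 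A younger brother of $v_i$ is a child of $v_{i-1}$, so its subtree is a snake whose root is a $\theta$-displacement of $\gamma(i-1)\notin K$, and its entering measure is thus distributed as $\widetilde{\Theta}_{\gamma(i-1)}$. By the branching property these subtrees are mutually independent and independent both of the spine and of the (conditioned) older-brother subtrees; and the conditional-probability computations displayed just before the statement show that, given $\Gamma(\Snake_x)=\gamma$, the counts $(\widetilde{b}_i)_{1\le i\le k}$ are independent with $\widetilde{b}_i\sim\mu_{\gamma(i-1)}$, the law in \eqref{mu-x}. Writing $\Theta^{(i,l)}$, $1\le l\le\widetilde{b}_i$, for i.i.d.\ copies of $\widetilde{\Theta}_{\gamma(i-1)}$ that are independent of the $\widetilde{b}_i$, we get, conditionally on $\Gamma(\Snake_x)=\gamma$,
$$
\Theta_x\stackrel{d}{=}\delta_{\gamma(k)}+\sum_{i=1}^{k}\sum_{l=1}^{\widetilde{b}_i}\Theta^{(i,l)}\stackrel{d}{=}\sum_{i=0}^{k}\Lambda_{\gamma(i)}=\Zm(\gamma),
$$
where the middle identity uses that for each fixed $i$ the inner double sum is $\Lambda_{\gamma(i-1)}$ by the definitions of $\mu_x$ and $\Lambda_x$, while $\delta_{\gamma(k)}=\Lambda_{\gamma(k)}$ because $\gamma(k)\in K$.

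Combining the two steps, $P(\overline{\Theta}_x=\eta)=\sum_{\gamma:x\to K}\frac{\BRW(\gamma)}{\pS(x)}\,\Zm(\gamma)(\eta)$ for every $\eta\in\Mp(K)$, which is precisely the law of $\bigsqcup_{\gamma:x\to K}\frac{\BRW(\gamma)}{\pS(x)}\cdot\Zm(\gamma)$. The only genuinely nontrivial point is the third paragraph: correctly identifying which vertices of $T$ are entering vertices once the left-most path is fixed, and checking that conditioning on $\Gamma(\Snake_x)=\gamma$ — an event that bakes in a first-hitting requirement along the spine and for all older-brother subtrees — leaves the subtrees hanging from the younger brothers as fresh, unconditioned, independent snakes. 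This is exactly what the branching property together with the conditional-independence formulas preceding the statement deliver.
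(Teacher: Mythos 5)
Your proposal is correct and follows essentially the same route as the paper: condition on the left-most path $\Gamma(\Snake_x)=\gamma$, use Proposition \ref{key-1} and \eqref{p2} to get the mixing weights $\BRW(\gamma)/\pS(x)$, and use the conditional independence of the younger-brother counts $\widetilde{b}_j$ with laws $\mu_{\gamma(j-1)}$ to identify the conditional law of $\Theta_x$ with $\Zm(\gamma)$. Your third paragraph simply makes explicit the step the paper compresses into ``hence, conditioned on $\Gamma(\Snake_x)=\gamma$, $\Theta_x$ has the law of $\Zm(\gamma)$,'' and does so correctly.
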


Set $n=n(x)=\|x\|^{\frac{d-1}{d}}$. We need to show:
\begin{equation}
\lim_{x\rightarrow\infty}d_{TV}(\overline{\Theta}_x,\hm_K^n)=0.
\end{equation}

Let $B=\Ball(n)$ and $B_1=\Ball(2n)$. For any $\gamma:x\rightarrow K$, we decompose $\gamma$ into two pieces $\gamma=\gamma_1\circ \gamma_2$ according to the last visiting time of $B^c$. We can rewrite $\overline{\Theta}_x$ as follows:
\begin{align*}
\overline{\Theta}_x=&\bigsqcup_{\gamma:x\rightarrow K}\frac{\BRW(\gamma)}{\pS(x)}\cdot \Zm(\gamma)
=\bigsqcup_{\gamma:x\rightarrow K}\frac{\BRW(\gamma_1)\BRW{(\gamma_2)}}{\pS(x)}\cdot (\Zm^-(\gamma_1)+\Zm(\gamma_2))\\
=&\bigsqcup_{\gamma_2:B^c\rightarrow K, \gamma_2\subseteq B}\frac{\BRW(\gamma_2)g_K(x,\gamma_2(0))
}{\pS(x)}\cdot
(\Zm(\gamma_2)+\bigsqcup_{\gamma_1:x\rightarrow \gamma_2(0)}\frac{\BRW(\gamma_1)}
{g_K(x,\gamma_2(0))}\cdot\Zm^-(\gamma_1))
\end{align*}
We point out that
\begin{equation}\label{tt1}
\sum_{\gamma_2:B_1^c\rightarrow K, \gamma_2\subseteq B}\frac{\BRW(\gamma_2)g_K(x,\gamma_2(0))
}{\pS(x)}\ll 1.
\end{equation}
This can be seen from: (by the Overshoot Lemma and \eqref{green})
$$
\sum_{\gamma_2:\Ball(\|x\|/2)^c\rightarrow K, \gamma_2\subseteq B}
\BRW(\gamma_2)g_K(x,\gamma_2(0))\preceq n^2/\|x\|^d\ll \pS(x);
$$
$$
\sum_{\gamma_2:\Ball(\|x\|/2)\setminus B_1\rightarrow K, \gamma_2\subseteq B}
\BRW(\gamma_2)g_K(x,\gamma_2(0))\preceq n^2/n^d\cdot \|x\|^{2-d}\ll \pS(x).
$$
Furthermore, when $y\in B_1$, by \eqref{Conver-G}, \eqref{green} and \eqref{m1-1}, we have:
\begin{equation}\label{tt2}
\frac{g_K(x,y)}{\pS(x)}\sim \frac{1}{\BCap(K)}\sim \frac{\EsC_K(y)}{\BCap(K)}.
\end{equation}
Hence (by Proposition \ref{prop-TV}, \eqref{tt1} and \eqref{tt2}), we have:
$$
d_{TV}\left(\overline{\Theta}_x,\bigsqcup_{\gamma:B_1\setminus B\rightarrow K, \gamma\subseteq B}\frac{\BRW(\gamma)
\EsC_K(\gamma(0))}{\BCap(K)}\cdot
(\Zm(\gamma)+\bigsqcup_{\gamma_1:x\rightarrow \gamma(0)}\frac{\BRW(\gamma_1)}
{g_K(x,\gamma(0))}\cdot\Zm^-(\gamma_1))\right)\rightarrow0.
$$
Similarly, we have:
$$
d_{TV}\left(\hm_K^n,\bigsqcup_{\gamma:B_1\setminus B\rightarrow K,\gamma\subseteq B}\frac{\BRW(\gamma)\EsC_K(\gamma(0))}{\BCap(K)}\cdot\Zm(\gamma)\right)\rightarrow0.
$$
On the other hand, for any $\gamma:B_1\setminus B\rightarrow K,\gamma\subseteq B$, we have (let $y=\gamma(0)$):
\begin{align*}
d_{TV}&\left(\Zm(\gamma)+\bigsqcup_{\gamma_1:x\rightarrow \gamma(0)}\frac{\BRW(\gamma_1)}
{g_K(x,\gamma(0))}\cdot\Zm^-(\gamma_1), \Zm(\gamma)\right)\\
&\leq P(\bigsqcup_{\gamma_1:x\rightarrow y}\frac{\BRW(\gamma_1)}
{g_K(x,y)}\cdot\Zm^-(\gamma_1)\neq0)\leq E\langle\bigsqcup_{\gamma_1:x\rightarrow y}\frac{\BRW(\gamma_1)}
{g_K(x,y)}\cdot\Zm^-(\gamma_1),1\rangle\\
&\preceq\sum_{\gamma_1:x\rightarrow y}\frac{\BRW(\gamma_1)}{g_K(x,y)}\sum_{i=0}^{|\gamma_1|}|\gamma_1(i)|^{2-d}
=\sum_{\gamma_1:x\rightarrow y}\frac{\BRW(\gamma_1)}{g_K(x,y)}\sum_{z\in\Z^d}|z|^{2-d}\sum_{i=0}^{|\gamma_1|}
\mathbf{1_{\gamma_1(i)=z}}.\\
\end{align*}
Note that $g_K(x,y)\sim |x|^{2-d}$ and $\BRW(\gamma)\leq\SRW(\gamma)$, it suffices to show:(when $x\rightarrow\infty$, uniformly for any $y\in B_1\setminus B$)
\begin{equation}\label{o2}
|x|^{d-2}\sum_{z\in\Z^d}|z|^{2-d}\sum_{\gamma_1:x\rightarrow y}\SRW(\gamma_1)\sum_{i=0}^{|\gamma_1|}\mathbf{1_{\gamma_1(i)=z}}\rightarrow0.
\end{equation}
Note that
$$
\sum_{\gamma_1:x\rightarrow y}\SRW(\gamma_1)\sum_{i=0}^{|\gamma_1|}\mathbf{1_{\gamma_1(i)=z}}
=\sum_{\gamma_3:x\rightarrow z}\SRW(\gamma_3)\sum_{\gamma_4:z\rightarrow y}\SRW(\gamma_4)=g(x,z)g(z,y).
$$
Hence, the left hand side of \eqref{o2} can be bounded by:
\begin{align*}
&|x|^{d-2}\sum_{z\in\Z^d}|z|^{2-d}g(x,z)g(z,y)\preceq|x|^{2-d}\sum_{z\in\Z^d}|z|^{2-d}|x-z|^{2-d}|y-z|^{2-d}\\
&=|x|^{d-2}(\sum_{z:|z-x|\leq|x|/2}+\sum_{z:|z-x|>|x|/2})|z|^{2-d}|x-z|^{2-d}|y-z|^{2-d}\\
&\preceq|x|^{d-2}(\sum_{z:|z-x|\leq|x|/2}|x|^{2-d}|z-x|^{2-d}|x|^{2-d}
+\sum_{z:|z-x|>|x|/2}|z|^{2-d}|x|^{2-d}|y-z|^{2-d})\\
&\preceq|x|^{d-2}(|x|^{6-2d}+|y|^{4-d}|x|^{2-d})\asymp |y|^{4-d}\preceq n^{4-d}\rightarrow 0.
\end{align*}
This completes the proof of \eqref{o2} and hence Theorem \ref{T-hm}.

\section{Branching capacity of balls.}
In this section, we compute the branching capacity of balls. As mentioned before, we carry out this by estimating the visiting probability of balls and then use \eqref{m1-1} in reverse. Let us set up the notations. For $x\in\Z^d$ and $A\ssubset \Z^d$, we write $\pS_A(x)$, $\rS_A(x)$, $\qS_A(x)$ and $\qS^-_A(x)$ respectively, for the probability that a snake, an adjoint snake, an infinite snake and a reversed snake respectively, starting from $x$ visits $A$.
\begin{theorem}\label{lem-visiting-finite}
Let $A=\{z=(z_1,0)\in\Z^m\times\Z^{d-m}=\Z^d: \|z\|\leq r\}$ be the $m$-dimensional ball ($1\leq m\leq d$) with radius $r\geq1$ and $x\in \Z^d\setminus A$. When $s=\dist (x,A)\geq2$, we have
\begin{equation*}
\pS_A(x)\asymp\left\{\begin{array}{ll}
r^{d-4}/s^{d-2},&\text{if }m\geq d-3 \text{ and } s\geq r;\\
1/s^2,&\text{if }m\geq d-3 \text{ and } s\leq r;\\
r^{d-4}/(s^{d-2}\log r),&\text{if }m= d-4 \text{ and } s\geq r;\\
1/(s^{2}\log s),&\text{if }m= d-4 \text{ and } s\leq r;\\
r^{m}/s^{d-2},&\text{if }m\leq d-5 \text{ and } s\geq r;\\
1/s^{d-m-2},&\text{if }m\leq d-5 \text{ and } s\leq r.\\
\end{array}
\right.
\end{equation*}
\end{theorem}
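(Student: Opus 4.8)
\emph{Setup.} The starting point is the identity $\pS_A(x)=G_A(x,A)$ from Proposition~\ref{key-1}, where $G_A$ is the Green function of the walk killed at the state-dependent rate $\KRW(z)=\rS_A(z)$, together with the self-consistency $\rS_A(z)\asymp\pS_A(z)$, which follows exactly as in the derivation of \eqref{rr-p}--\eqref{r-p}. Thus the killing rate is, up to constants, the very quantity we are estimating, and the whole argument is a bootstrap over scales: one proves the two-sided bound at a given scale assuming it at all smaller scales, the inductive hypothesis entering through the survival factors $\prod_i(1-\KRW(\gamma(i)))$ in $\BRW(\gamma)$. Throughout take $A$ centred at $0$, write $s=\dist(x,A)$, and let $a^\ast\in A$ be a point of $A$ closest to $x$; the base case (sets and distances all of bounded size) is handled directly, since then $\pS_A(x)\asymp 1$.

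\emph{Reduction to a diagonal estimate.} All six cases follow from the single ``diagonal'' estimate: for a discrete $m$-disk $D$ of radius $\asymp R$ and any $y$ with $\dist(y,D)\asymp R$,
$$\pS_D(y)\asymp R^{-2}\ (m\ge d-3),\qquad \pS_D(y)\asymp R^{-2}/\log R\ (m=d-4),\qquad \pS_D(y)\asymp R^{m+2-d}\ (m\le d-5);$$
note this is already a ``Harnack-type'' statement, asserting that $\pS_D(y)$ depends on $y$ only through $\dist(y,D)/R$ up to constants. Granting it: when $s\ge r$, the First-Visit Lemma~\ref{bd-1} peels the journey of the killed walk from $x$ down to the ball $\Ball(Cr)\supseteq A$, giving $\pS_A(x)=\sum_{z\in\Ball(Cr)}\Hm^{\Ball(Cr)^c}_\KRW(x,z)\,\pS_A(z)$. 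Outside $\Ball(Cr)$ the killing changes the harmonic measure only by a bounded factor (by the inductive bound $\KRW(y)\preceq r^{d-4}\dist(y,A)^{2-d}$ and \eqref{green}, $\sum_i\E\KRW(\gamma(i))\preceq r^{d-4}s^{4-d}\preceq1$ along the relevant paths), so $\Hm^{\Ball(Cr)^c}_\KRW(x,\cdot)\asymp\Hm^{\Ball(Cr)^c}(x,\cdot)$; this harmonic measure has total mass $P(S_x\text{ hits }\Ball(Cr))\asymp(r/s)^{d-2}$ and, by the Overshoot Lemma~\ref{overshoot}, is essentially supported on $z$ with $\dist(z,A)\asymp r$, where the diagonal estimate with $R=r$ applies; multiplying gives the claimed $s\ge r$ values. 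When $s\le r$, decompose $A$ into the central disk $A_0=A\cap\Ball_{a^\ast}(10s)$ and the dyadic annular pieces $A_k=A\cap(\Ball_{a^\ast}(2^{k+1}s)\setminus\Ball_{a^\ast}(2^ks))$, $1\le 2^k\le r/s$; each $A_k$ is comparable to an $m$-disk of radius $\asymp 2^ks$ lying at distance $\asymp 2^ks$ from $x$, so $\pS_{A_k}(x)$ is governed by the diagonal estimate at scale $2^ks$. The union bound gives $\pS_A(x)\le\sum_k\pS_{A_k}(x)$, a geometric-type sum dominated by its $k=0$ term; and $\pS_A(x)\ge\pS_{A_0}(x)$ for the lower bound. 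The $k=0$ term reproduces the stated $s\le r$ values, the nearly constant terms $(2^ks)^{-2}/\log(2^ks)$ producing the $1/\log$ when $m=d-4$ (and likewise the $\log r$ in the $s\ge r$, $m=d-4$ case).

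\emph{The diagonal estimate.} For the upper bound there are two global inequalities. First, Markov applied to the number $N_D$ of visits of $\Snake_y$ to $D$ gives $\pS_D(y)\le\E N_D=g(y,D)\asymp R^{m+2-d}$, sharp when $m\le d-5$. Second, to visit $D$ the snake must reach spatial distance $\asymp R$ from $y$, and $P(\Snake_y\text{ reaches distance }R)\asymp R^{-2}$: this is a standard fact for critical branching random walk in $d\ge5$ — the tree has size $\ge R^4$ with probability $\asymp R^{-2}$, while a size-$n$ tree-indexed walk has maximal displacement $\asymp n^{1/4}$ with light tails once $d\ge5$ (the worst single jump being only $\asymp n^{1/d}$ by \eqref{as1}) — and it gives the sharp bound $\pS_D(y)\preceq R^{-2}$ for $m\ge d-3$. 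For the matching lower bounds one conditions on the tree having size $\asymp R^4$ (cost $\asymp R^{-2}$), so that its range fills a ``$4$-dimensional'' region of scale $R$ near $y$, and then a second-moment (Paley--Zygmund) computation — expanding $\E N_D^2$ over ordered pairs of tree-vertices and their most recent common ancestor and using \eqref{green}, \eqref{ngreen} — shows that this range meets $D$ with probability $\asymp1$ when $m\ge d-4$ and $\asymp R^{m+4-d}$ when $m\le d-5$, yielding the stated values. The Harnack-type uniformity in $y$ is obtained by one more application of the First-Visit Lemma together with the standard comparability of hitting probabilities of $\Ball_{a^\ast}(cR)$ from points at distance $\asymp R$.

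\emph{Main obstacle.} The genuinely delicate point is the critical codimension $m=d-4$: there neither $\E N_D\asymp R^{-2}$ nor $P(\text{reach distance }R)\asymp R^{-2}$ is sharp — the truth is smaller by a factor $\log R$ — so one must estimate the conditional probability that the ``size-$R^4$'' range actually meets the codimension-$4$ disk and squeeze out the logarithm; equivalently, in the killed-walk picture one must evaluate $\sum_{\gamma:y\to D}\SRW(\gamma)\prod_i(1-\rS_D(\gamma(i)))$ using that $\rS_D(z)\asymp\dist(z,D)^{-2}$ near $D$ with the \emph{correct} implicit constant, so at the critical codimension the bootstrap must track constants, not merely orders of magnitude. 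This is the analogue of the logarithm in $\Capa(B^{d-2}(r))$ and is exactly the phenomenon forcing $r^{d-4}/\log r$ in Theorem~\ref{MT2}. The remaining work is lengthy but routine: making the transition between the regimes $s\le r$ and $s\ge r$ uniform, verifying the Harnack-type comparison for $\pS_D$, and absorbing all long-jump corrections through Lemma~\ref{overshoot}.
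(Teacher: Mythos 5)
Your overall architecture is close to the paper's: lower bounds by a second-moment argument, upper bounds for $m\le d-5$ by the first moment, and a reduction of the regime $s\ge r$ to the regime $s\asymp r$ by peeling with the First-Visit Lemma and controlling long jumps with Lemma~\ref{overshoot}. The paper does the lower bounds more directly than you propose, via $P(N>0)\ge(\E N)^2/\E N^2$ with the explicit estimate $\E N^2\preceq\sum_z g(x,z)g^2(z,A)$ (quoted from \cite{LL141}) and a three-region computation of that sum; your route of conditioning on the tree having size $\asymp R^4$ and then running Paley--Zygmund on the conditioned range is plausible but would require additional estimates (the ``range fills a $4$-dimensional region'' statement) that you do not supply and that the direct computation avoids.

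There are two genuine gaps. First, your upper bound for $m\ge d-3$ rests entirely on the assertion that $P(\Snake_y\text{ reaches distance }R)\preceq R^{-2}$ is ``a standard fact,'' justified by a heuristic about tree size and maximal displacement. Under the hypotheses actually in force here --- $\mu$ critical with only finite variance (no exponential moments), $\theta$ only weak $L^d$ --- this is not available off the shelf; it is precisely the content of the paper's Proposition~\ref{1-d}, which bounds the probability that a one-dimensional branching random walk hits a half-line and is proved by a nonlinear discrete parabolic comparison principle (the supersolution $v(x)=1\wedge(c/x^2)$, a Taylor expansion of $\mathbb{A}v$, and a separate treatment of the left tail of $\theta$ using the assumed tail bound). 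The paper then covers $A$ by at most $d$ half-spaces at distance $\asymp s$. Your heuristic, in particular the claim of ``light tails'' for the maximal displacement of a size-$n$ tree-indexed walk with only weak-$L^d$ increments, is itself a nontrivial estimate, so this step cannot be waved through. Second, you explicitly do not prove the $m=d-4$ upper bound for $s\le r$: your ``main obstacle'' paragraph correctly diagnoses why the logarithm is delicate, but offers no argument for extracting it. The paper is admittedly also brief here (it adapts the method of \cite{Z15} for visiting a point in $\Z^4$, indicating the substitute quantities $\tilde g(\gamma)=\sum_i g(\gamma(i),A)$ and the modified stopping times $\tilde\xi^i$), but it at least specifies a concrete mechanism, whereas your write-up leaves this case open. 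Until these two points are filled in, the cases $m\ge d-3$, $s\le r$ and $m=d-4$, $s\le r$ (and hence, through your own reduction, the corresponding $s\ge r$ cases) are not established.
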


\begin{proof}
Let us first mention the organization of the proof. All lower bounds will be proved by the second moment method. So we first
estimate the first and the second moments. For upper bounds, due to Markov property (from Proposition \ref{key-1}), the case for 'big $s$' (i.e. $s\geq r$) can be reduced to the case for 'small $s$' (i.e. $s\leq r$). For small $s$, visiting a large $m$-dimensional ball in $\Z^d$ behaves like visiting a point in $\Z^{d-m}$. Hence we can use the results on the latter case.

\emph{\textbf{Upper bounds for $\mathbf{m\leq d-5}$ and lower bounds for all cases.}}
Let $N$ be the number of times the branching random walk visits $A$. Then $\E N=\sum_{z\in A}g(x,z)= g(x,A)$. For the first moment, we point out:
\begin{equation}\label{1M}
g(x,A)\asymp\left\{
\begin{array}{ll}
r^{m}/s^{d-2},&\text{ for }s\geq r;\\
1/s^{d-m-2},&\text{ for } s\leq r, m\leq d-3 .\\
\end{array}
\right.\\
\end{equation}
The proof is straightforward. When $s\geq r$, for any $a\in A$, $\dist(x,a)\asymp s$. Hence $g(x,A)\asymp |A|\cdot 1/s^{d-2}\asymp r^{m}/s^{d-2}$. When $s\leq r, m\leq d-3$, the part of $\succeq$ is easy. Let $b\in A$ satisfying $\dist(x,b)=\dist(x,A)$ and let $B=b+\Ball(s)$. Then for any $a\in B\cap A$, $\dist(x,a)\asymp s$ and $|B\cap A|\asymp s^m$. Hence $g(x,A)\succeq s^{2-d}\cdot s^m=1/s^{d-m-2}$. For the other part, it needs a bit more work. Assume $x=(\bar{x}_1,\bar{x}_2)\in \Z^m\times\Z^{d-m}$ and let $x_1=(\bar{x}_1,0), x_2=(0,\bar{x}_2)\in \Z^d$. Since $\dist(x,A)=s$, either $\dist(x,x_1)\geq s/2$ or $\dist(x_1,A)\geq s/2$. When $s/2 \leq \dist(x,x_1)=\|x_2\|$, note that $|x_2|\succeq\|x_2\|\succeq s$. We have:
\begin{align*}
g(x,A)\leq& \sum_{z\in \Z^m\times 0\subseteq \Z^d}g(x,z)
\asymp \sum_{z_1\in \Z^m}(\sqrt{|z_1|^2+|x_2|^2})^{2-d}\\
=& \sum_{z_1\in \Z^m,|z_1|\leq s}(\sqrt{|z_1|^2+|x_2|^2})^{2-d}+
\sum_{z_1\in \Z^m,|z_1|\geq s}(\sqrt{|z_1|^2+|x_2|^2})^{2-d}\\
\leq&\sum_{z_1\in \Z^m,|z_1|\leq s}|x_2|^{2-d}+
\sum_{z_1\in \Z^m,|z_1|\geq s}(|z_1|)^{2-d}\\
\preceq& s^m \cdot s^{2-d}+\sum_{n\geq s}\frac{n^{m-1}}{n^{d-2}}=s^{m+2-d}+\sum_{n\geq s}\frac{1}{n^{d-m-1}}\\
\asymp& 1/s^{d-m-2}.\\
\end{align*}
When $\dist(x_1,A)\geq s/2$, note that $|x_1|\asymp\|x_1\|\succeq s$. We have:
\begin{align*}
g(x,A)\asymp& \sum_{z\in \Z^m\times 0\subseteq \Z^d,\|z\|\leq r}\dist(x,z)^{2-d}
\leq \sum_{z\in \Z^m\times 0\subseteq \Z^d,\|z-x_1\|\geq s/2}\dist(x,z)^{2-d}\\
=& \sum_{z\in \Z^m\times 0\subseteq \Z^d,\|z\|\geq s/2}\|z\|^{2-d}\asymp
\sum_{z\in \Z^m\times 0\subseteq \Z^d,\|z\|\geq s/2}|z|^{2-d}\\
\leq&\sum_{n\geq Cs}\frac{n^{m-1}}{n^{d-2}}=\sum_{n\geq Cs}\frac{1}{n^{d-m-1}}\asymp1/s^{d-m-2}.\\
\end{align*}
Now we finish the proof of \eqref{1M}. Note that \eqref{1M} is also true even for $x\in A$ i.e. $g(x,A)\asymp 1$(recall that since we
set $\|0\|=1/2$, when $x\in A$, $\dist(x,A)=1/2$ by our convention).

Using $P(N>0)\leq \E N$, one can get the desired upper bounds for $m\leq d-5$.

For the lower bounds, we need to estimate the second moment and the following is a standard result for branching random walk (for example, see Remark 2 in Page 13 of \cite{LL141}).
\begin{lemma}There exists a constant $C$, such that:
\begin{equation*}
\E N^2\leq C \sum_{z\in \Z^d}g(x,z)g^2(z,A).
\end{equation*}
\end{lemma}

First consider the case when $A$ is a $m$-dimensional ball and $m\leq d-3, s\geq r$. Let $B_0=\{z\in \Z^d: \dist(z,A)\leq r/6\}$
and $B_n=\Ball(2^{n}s/3)$, for $n\in \N^+$. Note that there exists some $c>0$, such that $\BallE(c^{-1}r)\subseteq B_0\subseteq \BallE(cr)$ and $\BallE(c^{-1}2^{n-1}s)\subseteq B_n\subseteq\BallE(c2^{n-1}s)$, for any $n\geq1$. We will divide the sum into three parts and estimate separately: $
\sum_{z\in B_0}$, $\quad \sum_{z\in B_1\setminus B_0}$, and $\sum_{n\geq2}\sum_{z\in B_n\setminus B_{n-1}}$.
When $z=(z_1,z_2)\in B_0$, where $z_1\in\Z^m$ and $z_2\in\Z^{d-m}$, $\|x-z\|\asymp s$ and $\dist(z,A)\succeq |z_2|$.
Hence
\begin{align*}
\sum_{z\in B_0}g(x,z)g^2(z,A)&\asymp\sum_{z\in B_0}\frac{1}{\|x-z\|^{d-2}}\frac{1}{\rho(z,A)^{2d-2m-4}}
\preceq\frac{1}{s^{d-2}}\sum_{z\in \BallE(cr)}\frac{1}{|z_2|^{2d-2m-4}}\\
&\preceq\frac{1}{s^{d-2}} \sum_{|z_1|\leq cr,|z_2|\leq cr}\frac{1}{|z_2|^{2d-2m-4}}
\asymp\frac{r^m}{s^{d-2}}\sum_{|z_2|\leq cr}\frac{1}{|z_2|^{2d-2m-4}}\\
&\asymp\frac{r^m}{s^{d-2}}\sum_{n\leq cr}\frac{n^{d-m-1}}{n^{2d-2m-4}}
=\frac{r^m}{s^{d-2}}\sum_{n\leq cr}\frac{1}{n^{d-m-3}}\\
&\asymp\left\{
\begin{array}{ll}
r^{m+1}/s^{d-2},&\text{ if }m=d-3;\\
r^{m}\log r/s^{d-2},&\text{ if }m=d-4;\\
r^m/s^{d-2},&\text{ if }m\leq d-5.\\
\end{array}
\right.\\
\end{align*}
When $z\in B_1\setminus B_0$, $\|x-z\|\asymp s$, $\dist(z,A)\asymp |z|$ and $g(z,A)\asymp r^m/|z|^{d-2}$. Hence:
\begin{align*}
\sum_{z\in B_1\setminus B_0}&g(x,z)g^2(z,A)\asymp\sum_{z\in B_1\setminus B_0}\frac{1}{\|x-z\|^{d-2}}\left(\frac{r^m}{|z|^{d-2}}\right)^2\\
&\preceq\sum_{z\in \BallE(cs)\setminus \BallE(c^{-1}r)}\frac{r^{2m}}{s^{d-2}|z|^{2d-4}}
\preceq\frac{r^{2m}}{s^{d-2}}\sum_{c^{-1}r\leq n\leq cs}\frac{n^{d-1}}{n^{2d-4}}
\asymp\frac{r^{2m}}{s^{d-2}}\frac{1}{r^{d-4}}.\\
\end{align*}
Note that this term is not bigger than the first term and hence negligible. The remaining part can be estimated similarly and is also negligible:
\begin{align*}
\sum_{n\geq2}&\sum_{z\in B_n\setminus B_{n-1}}g(x,z)g^2(z,A)
\asymp \sum_{n\geq2}\sum_{z\in B_n\setminus B_{n-1}}
\frac{1}{|x-z|^{d-2}}\left(\frac{r^m}{|z|^{d-2}}\right)^2\\
&\preceq\sum_{n\geq2}\sum_{z\in \BallE(c2^{n-1}s)\setminus \BallE(c^{-1}2^{n-1}s)}\frac{1}{|x-z|^{d-2}}\frac{r^{2m}}{(2^ns)^{2d-4}}
\stackrel{(\ast)}{\preceq}\sum_{n\geq2}\frac{(2^ns)^2r^{2m}}{(2^ns)^{2d-4}}\\
&=\sum_{n\geq2}\frac{r^{2m}}{s^{2d-6}}\frac{1}{(2^n)^{2d-6}}
\asymp\frac{r^{2m}}{s^{2d-6}}\leq\frac{r^{2m}}{s^{d-2}}\frac{1}{r^{d-4}}.\\
\end{align*}
$(\ast)$ is due to the fact that
$\sum_{z\in \BallE(n)}|x-z|^{2-d}\leq\sum_{z\in \BallE(n)}|z|^{2-d}\asymp n^2$.

To summarize,  we get:
$$
\sum_{z\in \Z^d}g(x,z)g^2(z,A)\preceq\left\{
\begin{array}{ll}
r^{m+1}/s^{d-2,}&\text{ if }m=d-3;\\
r^{m}\log r/s^{d-2},&\text{ if }m=d-4;\\
r^m/s^{d-2},&\text{ if }m\leq d-5.\\
\end{array}
\right.\\
$$

For $r\geq s$, since we are considering lower bound now, by monotonicity, we can assume $m\leq d-3, r\in[s/2,s]$. Then, we can just let $r\asymp s$ in the last formula and get:
\begin{equation*}
\sum_{z\in \Z^d}g(x,z)g^2(z,A)\preceq\left\{
\begin{array}{ll}
s^{m+1}/s^{d-2}=1,&\text{ if }m=d-3;\\
s^{m}\log s/s^{d-2}=\log s/s^2,&\text{ if }m=d-4;\\
s^m/s^{d-2}=1/s^{d-m-2},&\text{ if }m\leq d-5.\\
\end{array}
\right.\\
\end{equation*}
Using $\pS_A(x)=P(N>0)\geq (\E N)^2/\E N^2$, one can get the required lower bounds for all cases.

\emph{\textbf{From small $\mathbf{s}$ to big $\mathbf{s}$.}}We have proved the upper bound for $m\leq d-5$ and now consider the case $m\geq d-4$. Assume that we have the desired upper bounds for small $s$. We want the upper bound for  big $s$. Let
$B=\{z\in\Z^d: \dist(z,A)\leq r/2\}$ and $C=\{z\in\Z^d: \dist(z,A)\leq r/4\}$. Then by the assumption, we know that for any $z\in B\setminus C$, $\pS(z)\preceq \alpha(r)$, where $\alpha(r)=1/r^2$ or $1/(r^2\log r)$ depending on $m$. Let
$$
\Gamma_1=\{\gamma:x\rightarrow A|\gamma\subseteq A^c,\gamma\text{ visits } B\setminus C\},
$$
$$
\Gamma_2=\{\gamma:x\rightarrow A| \gamma\subseteq A^c,\gamma\text{ avoids } B\setminus C\}.
$$
We decompose $\pS_A(x)$ into two pieces:
$$
\pS_A(x)=\sum_{\gamma:x\rightarrow A}\BRW(\gamma)=\sum_{\gamma\in \Gamma_1}\BRW(\gamma)+\sum_{\gamma\in \Gamma_2}\BRW(\gamma).
$$
For the first term, by considering the first visiting point of $B\setminus C$, one can see:
\begin{align*}
\sum_{\gamma\in \Gamma_1}\BRW(\gamma)&\leq
\sum_{z\in B\setminus C} \, \sum_{\gamma:x\rightarrow z,\gamma\subseteq (B\setminus C)^c }\BRW(\gamma)\pS_A(z)\leq \sum_{z\in B\setminus C} \, \sum_{\gamma:x\rightarrow z,\gamma\subseteq (B\setminus C)^c }
\SRW(\gamma) \alpha(r)\\
&\leq \alpha(r)P(S_x\text{ visits }(B\setminus C))\leq \alpha(r)P(S_x\text{ visits }B)\\
&\asymp \alpha(r) (r/s)^{d-2}=\left\{\begin{array}{ll}
r^{d-4}/s^{d-2}&\text{if }m\geq d-3;\\
r^{d-4}/(s^{d-2}\log r) &\text{if }m=d-4.\\
\end{array}
\right.
\end{align*}
Recall that $S_x$ is the random walk starting from $x$ and we use the standard estimate of
$P(S_x\text{ visits }B)\asymp(r/s)^{d-2}$.
For the other term, by considering the first jump from $B^c$ to $C$, one can see:
\begin{align*}
\sum_{\gamma\in \Gamma_2}&\BRW(\gamma)\leq\sum_{z\in B^c}G_A(x,z)
\sum_{y\in C}\theta(y-z)\pS_A(y)\\
&=\sum_{z\in B_1^c}G_A(x,z)
\sum_{y\in C}\theta(y-z)\pS_A(y)+\sum_{z\in B_1\setminus B}G_A(x,z)
\sum_{y\in C}\theta(y-z)\pS_A(y),
\end{align*}
where $B_1=\{z:\dist(z,A)\leq r/2+s/4\}$.
Both terms are negligible:
\begin{multline*}
\sum_{z\in B_1^c}G_A(x,z)
\sum_{y\in C}\theta(y-z)\pS_A(y)\preceq \sum_{z\in B_1^c}1\cdot\sum_{y\in C}\theta(y-z)\alpha(r)\\
\leq\sum_{y\in C}\alpha(r)\sum_{z\in B_1^c}\theta(y-z)\preceq\sum_{y\in C}\alpha(r)s^{-d}
\preceq \alpha(r)r^d/s^d \leq \alpha(r)(r/s)^{d-2};
\end{multline*}
\begin{multline*}
\sum_{z\in B_1\setminus B}G_A(x,z)
\sum_{y\in C}\theta(y-z)\pS_A(y)\preceq \sum_{z\in B_1\setminus B}s^{2-d}\sum_{y\in C}\theta(y-z)\alpha(r)\\
\leq s^{2-d}\sum_{y\in C}\alpha(r)\sum_{z\in B^c}\theta(y-z)\preceq s^{2-d}\sum_{y\in C}\alpha(r) r^{-d}
\preceq s^{2-d} \alpha(r) \leq \alpha(r)(r/s)^{d-2}.
\end{multline*}

\emph{\textbf{For small $\mathbf{s}$ and $\mathbf{m\geq d-3}$.}} The upper bound in this case relies on the corresponding bound for one dimensional branching random walk. Let $H$ be a half space, say $H=\{z=(z_1,\dots,z_d)\in\Z^d:z_1\geq n\}$. The probability of visiting $H$ is equivalent to the probability of 1-dimensional branching random walk visiting a half line. The asymptotic behavior of the latter case is known. Le Gall and Lin (Theroem 7 in \cite{LL14}) have proved that in low dimensions ($d\leq 3$),
$$
\lim_{a\rightarrow\infty}\|a\|^2P(\text{ Branching random walk from }0 \text{ visits }a)=2(4-d)/d\sigma^2.
$$
However, this result is under the assumption that $\mu$ has finite exponential moment, which we do not assume. For our purpose, we give a weaker result here under weaker assumptions:

\begin{prop}\label{1-d}
Let $\Snake_x$ be 1-dimensional branching random walk starting from $x\in \Z$, given that the offspring distribution $\mu$ is critical and nondegenerate, and the jump distribution $\theta$ has zero mean and finite second moment, and satisfies $\sum_{i:i\leq -k}\theta(i)\leq Ck^{-4}$ for any $k\in \N^{+}$ and some $C$ (independent of $k$). Then for some large constant $c=c(\theta, \mu)>0$ (independent of $x$), we have: for any $x\in \N^{+}$,
\begin{equation}\label{1_d}
P(\Snake_x \text{ visits }\Z^-)\leq c/|x|^2,
\end{equation}
where $\Z^-=\{0,-1,-2,\dots\}$.
\end{prop}

We postpone the proof of this proposition. Return to $d$ dimension. Since we can find at most $d$ half spaces $H_1,H_2,\dots,H_d$ satisfying: $\dist(x,H_i) \asymp s$ for any $i=1,\dots,d$; and that any path from $x$ to $A$ must hit at least one of $H_i$. Then we have:
$$
\pS_A(x)\leq\sum_{i=1}^d P(\Snake_x \text{ visits }H_i)\preceq d\cdot |s|^{-2}\asymp |s|^{-2}.
$$

\emph{\textbf{For small $\mathbf{s}$ and $\mathbf{m= d-4}$.}} Intuitively when the radius $r$ is large, visiting a $m=d-4$ dimensional ball in $\Z^d$, is similar to visiting a point in $\Z^4$. This is indeed the case. \cite{Z15} gives the desired upper bound for the latter case and the method there also works here with slight modifications. We point out the major differences and leave the details to the reader. On the one hand, one should use $\tilde{g}(\gamma)\DeFine\sum_{i=0}^{|\gamma|-1}g(\gamma(i),A)$ instead of $g(\gamma)$ there. On the other hand, in proving an analogy of Lemma 10.1.2(a) in \cite{LL10}, one might use the stopping times:
$$
\tilde{\xi}^i=\min\{k:\dist(S_k,A)\geq 2^i\}\wedge (\tilde{\xi}^{i-1}+(2^i)^2).
$$
instead of $\xi^i$ there.
\end{proof}

\begin{proof}[Proof of Proposition \ref{1-d}]
Write $p(x)$ for the left hand site in \eqref{1_d}. In order to obtain upper bounds of $p(x)$, we use some of the ideas of \cite{LZ11} (Section 7.1), the techniques from nonlinear difference equations. We will exploit the fact that $p(x)$ satisfies a parabolic nonlinear difference equation and use the comparison principle.

Let $p_n(x)=P(\Snake_x \text{ visits }\Z^- \text{ within the first } n \text{ generations})$. Then $p_n(x)$ is increasing for $n$ and converges to $p(x)$ when $n\rightarrow\infty$. On the other hand, one can easily verify that $p_n(x)$ satisfies the recursive equations:
$$
p_0(x)=\mathbf{1}_{\Z^-}(x); \quad p_n(x)=1 \text{ for }x \in \Z^-;
$$
\begin{equation}\label{de}
p_{n+1}(x)=f(\mathbb{A} p_n(x)), \text{ for }x \in \N^{+};
\end{equation}
where $f(t)=1-\sum_{k\geq0}\mu(k)(1-t)^k$ and $\mathbb{A}$ is the Markov operator for the random walk, that is, for any bounded function $w:\Z\rightarrow \R$, $\mathbb{A}w(x)=\sum_{y\in\Z}\theta(y)w(x+y)$. One can see that $f:[0,1]\rightarrow[0,1-\mu(0)]$ is in $C^1[0,1]\cap C^\infty(0,1]$ with the first 2 derivatives as follows:
$$
f'(t)=\sum_{k\geq1}k\mu(k)(1-t)^{k-1}>0\; \text{for }t\in[0,1);\quad f'(0)=1, f'(1)=\mu(1)\geq0;
$$
$$
f''(t)=-\sum_{k\geq2}k(k-1)\mu(k)(1-t)^{k-2}<0\; \text{for }t\in(0,1).
$$
From these, it is easy to obtain:
$$
\inf_{t\in(0,1]}\frac{t-f(t)}{t^2}>0.
$$
Hence we can find some $a\in(0,1/2)$, such that
\begin{equation}\label{de-a}
f(t)\leq t-at^2,\; \text{for any } t\in[0,1] \text{ and }
t(1+at)\leq 1  \; \text{for any } t\in[0,1-\mu(0)].
\end{equation}

To extract information from \eqref{de}, we will use the following standard comparison principle.
\begin{lemma}
Let $u_n(x)$ and $v_n(x)$ be $\Z\rightarrow[0,1]$,  satisfying
$$
u_n(x)=v_n(x)=1, \quad \text{for any }x\in\Z^- \text{ and } n\in\N;
$$
and
$$
u_{n+1}(x)=f(\mathbb{A}u_n(x)),\quad v_{n+1}(x)\geq f(\mathbb{A}v_n(x)) \quad \text{for any } x\in\N^{+}.
$$
If $v_0(x)\geq u_0(x)$ for all $x$, then
$$
v_n(x)\geq u_n(x)\quad \text{for all } n\in \N^{+} \text{ and } x\in\Z.
$$
\end{lemma}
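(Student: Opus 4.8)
The plan is to prove the comparison principle by a straightforward induction on $n$, using two monotonicity facts that follow immediately from the structure of the problem. First, the linear operator $\mathbb{A}$ is order-preserving: since $\theta\geq0$ with $\sum_y\theta(y)=1$, any two bounded functions with $w_1\geq w_2$ pointwise satisfy $\mathbb{A}w_1(x)=\sum_y\theta(y)w_1(x+y)\geq\sum_y\theta(y)w_2(x+y)=\mathbb{A}w_2(x)$ for every $x$; moreover $\mathbb{A}$ sends $[0,1]$-valued functions to $[0,1]$-valued functions, so $f$ can always be applied to $\mathbb{A}u_n$ and $\mathbb{A}v_n$. Second, $f$ is nondecreasing on all of $[0,1]$: from the derivative formulas recorded above, $f'(t)=\sum_{k\geq1}k\mu(k)(1-t)^{k-1}>0$ for $t\in[0,1)$ and $f'(1)=\mu(1)\geq0$, and $f\in C^1[0,1]$, so $f$ is continuous and nondecreasing on the closed interval.

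With these in hand I would argue as follows. For $n=0$, the inequality $v_0(x)\geq u_0(x)$ is exactly the hypothesis. Assume $v_n(x)\geq u_n(x)$ for all $x\in\Z$. If $x\in\Z^-$, then $v_{n+1}(x)=1=u_{n+1}(x)$ by the imposed boundary condition. If $x\in\N^{+}$, then the order-preserving property of $\mathbb{A}$ gives $\mathbb{A}v_n(x)\geq\mathbb{A}u_n(x)$, both numbers lying in $[0,1]$; applying the nondecreasing function $f$ and then invoking the two recursions (an equality for $u$, an inequality for $v$) yields
$$
v_{n+1}(x)\geq f(\mathbb{A}v_n(x))\geq f(\mathbb{A}u_n(x))=u_{n+1}(x).
$$
This closes the induction and establishes $v_n\geq u_n$ for all $n\in\N^{+}$ and all $x\in\Z$.

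There is essentially no serious obstacle here; the only points that need a moment's care are (i) that $f$ is monotone up to and including the endpoint $t=1$, which is handled by $f'(1)=\mu(1)\geq0$ together with the continuity of $f$ at $1$, and (ii) that every argument fed into $f$ stays in its domain $[0,1]$, which holds because $\mathbb{A}$ is an averaging operator and $f([0,1])\subseteq[0,1-\mu(0)]\subseteq[0,1]$, so the iterates $u_n,v_n$ indeed remain $[0,1]$-valued as assumed. Everything else is a direct induction, and no further input about the random walk or the offspring law is required.
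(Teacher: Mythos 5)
Your proof is correct and follows essentially the same route as the paper: induction on $n$ combined with the order-preservation of $\mathbb{A}$ and the monotonicity of $f$ (the paper phrases the latter step via the mean value theorem and $\min_{t\in[0,1]}f'(t)\geq 0$, but this is only a cosmetic difference). Your attention to the endpoint $t=1$ and to the iterates staying in $[0,1]$ is a welcome extra bit of care.
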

\begin{proof}
Note that for $n>0$ and $x\in\N^+$:
\begin{align*}
v_n(x)-u_n(x)&\geq f(\mathbb{A}v_{n-1}(x))-f(\mathbb{A}u_{n-1}(x))\\
&\geq \min_{t\in[0,1]} \{f'(t)\}(\mathbb{A}v_{n-1}(x)-\mathbb{A}u_{n-1}(x))\\
&=\min_{t\in[0,1]} \{f'(t)\}\mathbb{A}(v_{n-1}-u_{n-1})(x).
\end{align*}
Since $f'(t)\geq0$, one can use induction to finish the proof.
\end{proof}
Now let $u_n(x)=p_n(x)$ and $v_n(x)=v(x)=1\wedge (c/x^2)$ when $x\in\N^{++}$ for some large $c$ (to be determined later). If we can show
\begin{equation}\label{de1}
v(x)\geq f(\mathbb{A}v(x)) \quad \text{for any } x\in\N^{+},
\end{equation}
then by the lemma above we conclude the proof of Proposition \ref{1-d}.

Let us write down our strategy for choosing $c$. First we fix some $\epsilon\in(0,1/2)$, such that $(1-\mu(0))/(1-\epsilon)^2<1$. Choose $c$ satisfying:
\begin{equation}\label{de-b}
ac^2\geq C/\epsilon^4 +3(E|\theta|^2)c/(1-\epsilon)^4.
\end{equation}
We argue that \eqref{de1} is correct for our choice of $c$. When $c/x^2\geq1-\mu(0)$, \eqref{de1} is obvious since $f(t)\leq 1-\mu(0)$.

Now assume $c/x^2<1-\mu(0)$. Since $f(t)$ is increasing, we need to find an upper bound of $\mathbb{A}v(x)$. We achieve this by decomposing $\mathbb{A}v(x)$ into two pieces and estimating each one separately:
$$
\mathbb{A}v(x)=\sum_{y\in\Z}\theta(y)v(x+y)=\sum_{y\leq -\epsilon x}\theta(y)v(x+y)+\sum_{y>-\epsilon x}\theta(y)v(x+y).
$$
We can use our assumption of $\theta$ to bound the first term:
$$
\sum_{y\leq -\epsilon x}\theta(y)v(x+y)\leq \sum_{y\leq -\epsilon x}\theta(y)\leq C/(\epsilon x)^4;
$$
Using Taylor expansion, the second term can be bounded by:
\begin{align*}
\sum_{y>-\epsilon x}&\theta(y)v(x+y)\leq \sum_{y>-\epsilon x}\theta(y)(v(x)+yv'(x)+\frac{y^2}{2}v''((1-\epsilon)x))\\
&=v(x)\sum_{y>-\epsilon x}\theta(y)+v'(x)\sum_{y>-\epsilon x}\theta(y)y+
v''((1-\epsilon)x)\sum_{y>-\epsilon x}\theta(y)\frac{y^2}{2}\\
&\leq v(x)\cdot 1+v'(x)(-\sum_{y\leq-\epsilon x}\theta(y)y)+v''((1-\epsilon)x)E|\theta|^2/2\\
&\leq v(x)+0+\frac{E|\theta|^2}{2}\cdot\frac{6c}{(1-\epsilon)^4x^4}.\\
\end{align*}
To summarize, we get (let $K=(C/\epsilon^4+3E|\theta|^2c/(1-\epsilon)^4)$):
$$
\mathbb{A}v(x)\leq v(x)+(C/\epsilon^4+3E|\theta|^2c/(1-\epsilon)^4)x^{-4}= v(x)+Kx^{-4}.
$$
Note that by \eqref{de-a} and \eqref{de-b}, we have $v(x)+Kx^{-4}\leq v(x)+a(v(x))^2\leq 1$.  Hence:
\begin{align*}
f(\mathbb{A}v(x))&\leq\mathbb{A}v(x)(1-a\mathbb{A}v(x))\leq (v(x)+Kx^{-4})(1-av(x))\\
&\leq v(x)+Kx^{-4}-a(v(x))^2=v(x)+Kx^{-4}-ac^2x^{-4}\leq v(x).\\
\end{align*}
This completes the proof of \eqref{de1} and hence the proof of Proposition \ref{1-d}.
\end{proof}

For the future use, we give the following upper bound for the visiting probability of a ball by an infinite snake.

\begin{lemma}\label{pro-inf-ball}
Let $A=\Ball(r)$ ($r\geq1$) and $x\in\Z^d$ such that $s=\dist(x,A)\geq r$. Then we have:
\begin{equation}\label{bd-inf}
\qS_A(x) \vee \qrS_A(x)  \preceq (r/s)^{d-4}.
\end{equation}
\end{lemma}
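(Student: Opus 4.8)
The plan is to decompose the infinite snake into its backbone random walk $(S_n)_{n\ge0}$ (with $S_0=x$) and the independent adjoint snakes $\Snake'_{S_n}$ grafted along it, and to control the bushes using Theorem \ref{lem-visiting-finite}. Since $\Rtheta$ satisfies exactly the same hypotheses as $\theta$, the argument for $\qrS_A$ is word-for-word the same as the one for $\qS_A$ with $g$ replaced by the reversed Green function $g^-(\cdot,\cdot)=g(\cdot,\cdot)$, so I describe only the case of $\qS_A$. Translating so that $A=\Ball(r)$ is centred at the origin we have $\|x\|=s+r$, hence $\|x\|\asymp s$ since $r\le s$; moreover if $s\le 3r$ then $(r/s)^{d-4}\asymp1$ and the bound is trivial, so we may assume $s\ge 3r$, in which case $\|x\|\ge4r$ and the standard hitting estimates for $\Ball(2r)$ apply.

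Write $V=\Ball(2r)=\{y:\dist(y,A)\le r\}$ and recall that $\Snake^\infty_x$ visits $A$ iff $\Snake'_{S_i}$ visits $A$ for some $i$, these snakes being independent given the backbone. Split on whether the backbone visits $V$. On the event that it does, bound the probability crudely by $P_x(S\text{ hits }\Ball(2r))\asymp(2r/\|x\|)^{d-2}\asymp(r/s)^{d-2}\le(r/s)^{d-4}$. On the complementary event every $S_i$ lies outside $V$ (so the backbone misses $A\subseteq V$), hence if $\Snake^\infty_x$ visits $A$ it does so through a bush rooted at some $S_i\notin V$; a union bound over $i$ and Tonelli give
\[
P\big(\text{backbone avoids }V,\ \Snake^\infty_x\text{ visits }A\big)\ \le\ \sum_{y\notin V}g(x,y)\,\rS_A(y).
\]
By the estimates yielding \eqref{r-p} (whose constants depend only on $\mu$, not on the target set) one has $\rS_A(y)\le C\pS_A(y)$; and for $y\notin V$ one has $\dist(y,A)=\|y\|-r\ge r$, so Theorem \ref{lem-visiting-finite} (case $m\ge d-3$, $s\ge r$, here with $m=d$) gives $\pS_A(y)\preceq r^{d-4}\dist(y,A)^{2-d}$. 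Since $\dist(y,A)\asymp\|y\|$ for $\|y\|>2r$, and using $g(x,y)\asymp\|x-y\|^{2-d}$ from \eqref{green} together with $\sum_{z\in\Z^d}\|x-z\|^{2-d}\|z\|^{2-d}\asymp\|x\|^{4-d}$ from \eqref{t-d}, we obtain
\[
\sum_{y\notin V}g(x,y)\,\rS_A(y)\ \preceq\ r^{d-4}\sum_{\|y\|>2r}\|x-y\|^{2-d}\|y\|^{2-d}\ \preceq\ r^{d-4}\|x\|^{4-d}\ \asymp\ r^{d-4}s^{4-d}\ =\ (r/s)^{d-4}.
\]
Adding the two cases gives $\qS_A(x)\preceq(r/s)^{d-4}$, and the identical computation with the $\Rtheta$-backbone gives $\qrS_A(x)\preceq(r/s)^{d-4}$.

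The one genuinely delicate point, which dictates the structure above, is that one must \emph{not} try to prove the bound by combining \eqref{for-q2} with the crude inequality $G_A(x,y)\le g(x,y)$: the sum $\sum_y g(x,y)\rS_A(y)$ is then too large, because for $y$ near $A$ the true weight $G_A(x,y)$ is far smaller than $g(x,y)$ (absorption on $A$), while $\rS_A(y)$ only decays like $\dist(y,A)^{-2}$ there rather than like $r^{d-4}\dist(y,A)^{2-d}$. Conditioning on the backbone staying outside $\Ball(2r)$ is exactly what forces every surviving bush to sit in the far field $\dist(\cdot,A)\ge r$, where the sharp decay from Theorem \ref{lem-visiting-finite} is available; the near field is paid for once by the cheap term $P_x(S\text{ hits }\Ball(2r))\asymp(r/s)^{d-2}$. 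Beyond this, everything reduces to \eqref{green}, \eqref{t-d}, Theorem \ref{lem-visiting-finite}, the comparison $\rS_A\asymp\pS_A$, and the standard estimate for hitting a ball, so I expect no further obstacle.
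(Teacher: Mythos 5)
Your proof is correct and follows essentially the same route as the paper: split on whether the backbone hits a ball of radius comparable to $2r$, bound the first event by the standard hitting estimate $(r/s)^{d-2}$, and bound the second by $\sum_{y\notin V}g(x,y)\rS_A(y)$ using $\rS_A\asymp\pS_A$ and the far-field bound $\pS_A(y)\preceq r^{d-4}\dist(y,A)^{2-d}$ from Theorem \ref{lem-visiting-finite}, followed by the convolution estimate. The only cosmetic differences are the radius of the intermediate ball ($2r$ versus $1.5r$) and that you phrase the second term as a union bound over bush roots rather than via the killed-walk Green function $G_A(x,z)\le g(x,z)$; these are equivalent.
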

\begin{proof}
Consider a big ball $B=\Ball(1.5r)$. Then
$$
\qS_A(x)\leq P(\text{ backbone visits }B)+P(\text{ backbone avoids }B,\; \Snake_x^{\infty}\text { visits }A).
$$
Since the backbone is just a random walk, the first term is comparable to $(r/s)^{d-2}$, which is less that $(r/s)^{d-4}$. On the other hand, when the backbone does not visit $B$, by considering where the particle is killed, we have:
\begin{align*}
P(&\text{ backbone avoids }B,\; \Snake_x^{\infty}\text { visits }A)
\leq\sum_{z\in B^c}G_A(x,z)\rS_A(z)\preceq \sum_{z\in B^c}g(x,z)\pS_A(z)\\
&\asymp \sum_{z\in B^c} \frac{1}{|x-z|^{d-2}}\frac{r^{d-4}}{|\dist(z,A)|^{d-2}}
\asymp \sum_{z\in B^c} \frac{1}{|x-z|^{d-2}}\frac{r^{d-4}}{|z|^{d-2}}
\asymp \frac{r^{d-4}}{|x|^{d-4}}\asymp (\frac{r}{s})^{d-4}.
\end{align*}
This completes the proof of $\qS_A(x)  \preceq (r/s)^{d-4}$ and similarly one can show $\qS_A^-(x)  \preceq (r/s)^{d-4}$.
\end{proof}

\section{Proof of Theorem \ref{MT5}}
We use an equation approach similar to the proof of Proposition \ref{1-d}. Write
$f_i(t)=1-\sum_{k\geq 0}\mu_i(k)(1-t)^k$, $i=1,2$. We need the following little lemma and postpone its proof.
\begin{lemma}\label{f1-f2}
There is a $C=C(\mu_1,\mu_2)>1$ such that, for all $t\in[0,1]$,
\begin{equation}
f_1((Ct) \wedge 1)\leq (Cf_2(t))\wedge 1.
\end{equation}
\end{lemma}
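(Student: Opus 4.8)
The plan is to reduce the claim to a one-sided comparison of the quadratic Taylor corrections of $f_1$ and $f_2$ near $0$, together with a trivial endpoint estimate. Since $f_1\le 1$ always, it suffices to produce $C=C(\mu_1,\mu_2)>1$ with $f_1\big((Ct)\wedge1\big)\le Cf_2(t)$ for every $t\in[0,1]$. First I would record two estimates. Exactly as in the proof of Proposition~\ref{1-d}, the nondegeneracy of $\mu_1$ gives $a\DeFine\inf_{u\in(0,1]}(u-f_1(u))/u^2>0$, i.e.
$$f_1(u)\le u-au^2,\qquad u\in[0,1];$$
evaluating at $u=1$ also shows $a\le 1-f_1(1)=\mu_1(0)$, and $\mu_1(0)>0$ because a critical $\mu_1$ with $\mu_1(0)=0$ forces $\mu_1(1)=1$. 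On the other side, $f_2''(t)=-\sum_{k\ge2}k(k-1)\mu_2(k)(1-t)^{k-2}$ satisfies $|f_2''|\le\sigma_2^2<\infty$ on $[0,1]$, so from $f_2(0)=0$, $f_2'(0)=1$ and Taylor's theorem,
$$f_2(t)\ge t-\tfrac{\sigma_2^2}{2}t^2,\qquad t\in[0,1].$$

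Then I would set $C\DeFine\max\{2,\ \sigma_2^2/(2a)\}$, which depends only on $\mu_1$ and $\mu_2$, and split on the size of $Ct$. If $Ct\le1$, then using the first estimate with $u=Ct$, the inequality $aC\ge\sigma_2^2/2$, and $0\le aCt\le1$,
$$f_1(Ct)\le Ct(1-aCt)\le Ct\Big(1-\tfrac{\sigma_2^2}{2}t\Big)=C\Big(t-\tfrac{\sigma_2^2}{2}t^2\Big)\le Cf_2(t).$$
If $Ct>1$, then $(Ct)\wedge1=1$, so $f_1\big((Ct)\wedge1\big)=f_1(1)=1-\mu_1(0)$, while monotonicity of $f_2$ and the second estimate give $Cf_2(t)\ge Cf_2(1/C)\ge 1-\sigma_2^2/(2C)\ge 1-\mu_1(0)$, the last step because $C\ge\sigma_2^2/(2a)\ge\sigma_2^2/(2\mu_1(0))$. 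In both cases $f_1\big((Ct)\wedge1\big)\le Cf_2(t)$, and since the left side is also $\le1$, we conclude $f_1\big((Ct)\wedge1\big)\le(Cf_2(t))\wedge1$.

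The only delicate regime is $t\downarrow0$, where both sides behave like $Ct$ to first order; the argument closes precisely because $f_1$ lies \emph{strictly} below the identity to second order (the constant $a>0$, from nondegeneracy of $\mu_1$) whereas $f_2$ stays \emph{close} to the identity to second order (the $\sigma_2^2$ bound, from finite variance of $\mu_2$), so taking $C$ large makes the quadratic loss $aC^2t^2$ inside $f_1(Ct)$ dominate the loss $\tfrac{\sigma_2^2}{2}Ct^2$ in $Cf_2(t)$. Everything else — the truncation at $1$, and the case $Ct>1$ — is immediate from $f_1\le1$ and the endpoint identity $f_1(1)=1-\mu_1(0)$ together with $a\le\mu_1(0)$, so I do not expect any genuine obstacle.
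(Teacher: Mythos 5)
Your proof is correct and follows essentially the same route as the paper: both split at $t=1/C$, dispatch the large-$t$ regime via the endpoint value $f_1(1)=1-\mu_1(0)$, and win on $[0,1/C]$ by making the second-order loss in $f_1(Ct)$ (of size $aC^2t^2$) dominate that of $Cf_2(t)$ (of size $\tfrac{\sigma_2^2}{2}Ct^2$). The only cosmetic difference is that you implement the quadratic comparison with the explicit envelopes $f_1(u)\leq u-au^2$ (the constant $a$ already used in the proof of Proposition \ref{1-d}) and $f_2(t)\geq t-\tfrac{\sigma_2^2}{2}t^2$, whereas the paper integrates $g''$ twice for $g(t)=Cf_2(t)-f_1(Ct)$; your choice of $C$ is more explicit but the mechanism is identical.
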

For any $A\ssubset \Z^d$ fixed, as in the proof of Proposition \ref{1-d}, denote $u_{i,n}(x)$ ($i=1,2$) recursively by:
$$
u_{i,0}(x)=\mathbf{1}_A(x),\; u_{0,n}(a)=1 \; \forall a\in A;\quad
u_{i,n+1}(x)=f_i(\mathbb{A}u_{i,n}(x)) \;\forall a\notin A.
$$
With the help of last lemma, one can see that $u_{1,n}(x)\leq Cu_{2,n}(x)$, for any $n,x$. On the other hand, we know that $u_{i,n}(x)\rightarrow \pS_{i,A}(x)$. Hence we have $\pS_{1,A}(x)\leq C\pS_{2,A}(x)$. Then by Theorem \ref{MT1}, one can get Theorem \ref{MT5}.

\begin{proof}[Proof of Lemma \ref{f1-f2}]
Since $\lim_{t\rightarrow 0}f_2(t)/t=1$, when $C$ is large enough, we have $Cf_2(C^{-1})\geq 1-\mu_1(0)=f_1(1)$.
It suffices to show for $t\in[0,C^{-1}]$,
\begin{equation}\label{o1}
g(t)\doteq Cf_2(t)-f_1(Ct)\geq 0.
\end{equation}
Note that $f_i(0)=0,f_i'(0)=1, f_i''(0)=-\text{Var}(\mu_i), f_i''(t)\leq0$ and $|f_i''(t)|$ is non-increasing. Hence we can find some $C=C(\mu_1,\mu_2)>1$ such that, $C|f_1''(1/2)|\geq 2|f_2''(0)|$ (and $Cf_2(C^{-1})\geq 1-\mu_1(0)$). Then we have
$$
g''(t)=C(f_2''(t)-Cf_1''(Ct))\geq
\left\{\begin{array}{ll}
C|f_2''(0)|, & t\in [0,1/(2C)];\\
-C|f_2''(0)|, & t\in [1/(2C),1/C].\\
\end{array}
\right.
$$
Together with $g(0)= g'(0)=0$, one can get \eqref{o1}.
\end{proof}

\section{Bounds for the Green function.}
The speed of convergence in \eqref{Conver-G} depends on $K$, which maybe not convenient in some cases. For example, by that lemma, we know $G_K(x,y)\geq C_K g(x,y)$ (when $|x|,|y|$ are large), but the constant depends on $K$. The purpose of this section is to build up this type of bounds with constants independent of $K$.

Thanks to lemma \ref{pro-g1}, we have:
\begin{lemma}\label{bd_G-0}
Let $U,V$ be two connected bounded open subset of $\R^d$ such that $\overline{U}\subseteq V$. Then there exists a $C=C(U,V)$ such that if $A_n=nU\cap\Z^d, B_n=nV\cap\Z^d$ then when $n$ is sufficiently large and $K\subseteq B_n^c$, we have
\begin{equation}
G_K(x,y)\geq Cg(x,y) \text{ for any }x,y\in A_n.
\end{equation}
\end{lemma}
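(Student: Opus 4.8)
The plan is to bound $G_K(x,y)=\sum_{\gamma:x\to y}\BRW(\gamma)$ from below by keeping only short paths that stay well inside $B_n$, and then to check that along such paths the correction factor $\BRW(\gamma)/\SRW(\gamma)=\prod_{i=0}^{|\gamma|-1}\bigl(1-\rS(\gamma(i))\bigr)$ is bounded away from $0$ uniformly in $K$; the remaining $\SRW$-sum is handled by Lemma~\ref{pro-g1}. Concretely, first fix an auxiliary connected bounded open set $W$ with $\overline U\subseteq W$ and $\overline W\subseteq V$ (a sufficiently small Euclidean neighbourhood of $\overline U$), and set $W_n=nW\cap\Z^d$. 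Since $\overline W$ is a compact subset of the open set $V$, there is $c_0=c_0(U,V)>0$ with $\dist(W_n,B_n^c)\geq c_0 n$ for all large $n$, and $A_n\subseteq W_n$ for every $n$. Applying Lemma~\ref{pro-g1} with $(U,W)$ in place of $(U,V)$ yields $C=C(U,W)=C(U,V)$ such that, for $n$ large,
\[
\sum_{\gamma:x\to y,\ \gamma\subseteq W_n,\ |\gamma|\leq 2n^2}\SRW(\gamma)\geq Cg(x,y),\qquad x,y\in A_n,
\]
and every vertex of such a $\gamma$ lies in $W_n$.

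The heart of the argument is the uniform estimate $\rS_K(z)\preceq n^{-2}$ for all $z\in W_n$ and all finite $K\subseteq B_n^c$, with implicit constant depending only on $U,V$ (and $d,\mu,\theta$) and, crucially, not on $|K|$ or $\diam(K)$. By \eqref{rr-p}--\eqref{r-p}, whose constants depend only on $\mu$, it suffices to prove the same for $\pS_K(z)$. For $z\in W_n$ and $w\in K\subseteq B_n^c$ we have $\|w-z\|\geq c_0 n$, hence $|w-z|\geq c_0' n$ for some $c_0'=c_0'(U,V)>0$; so if $\Snake_z$ visits $K$, it visits a point $w$ with $|w_i-z_i|\geq c_0' n/\sqrt d$ for some coordinate $i$. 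Therefore
\[
\pS_K(z)\leq\sum_{i=1}^d\Bigl(P\bigl(\Snake_z\text{ visits }\{u:u_i\geq z_i+c_0'n/\sqrt d\}\bigr)+P\bigl(\Snake_z\text{ visits }\{u:u_i\leq z_i-c_0'n/\sqrt d\}\bigr)\Bigr),
\]
and each term is the probability that the $i$-th coordinate of $\Snake$ — itself a one-dimensional critical branching random walk whose jump law is the $i$-th marginal of $\theta$ (mean zero, finite variance, both tails $\preceq k^{-4}$ by \eqref{as1} since $d\geq5$) — started at $0$ reaches a half-line at distance $\asymp n$. Proposition~\ref{1-d} and its reflection bound each such term by $\preceq n^{-2}$, which gives the claim.

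Finally, for any $\gamma$ as above all of $\gamma(0),\dots,\gamma(|\gamma|-1)$ lie in $W_n$, so $\BRW(\gamma)/\SRW(\gamma)=\prod_{i=0}^{|\gamma|-1}\bigl(1-\rS_K(\gamma(i))\bigr)\geq(1-C'n^{-2})^{2n^2}\to e^{-2C'}$, hence is $\geq c''>0$ uniformly in $K$ for $n$ large. Consequently
\[
G_K(x,y)\geq\sum_{\gamma:x\to y,\ \gamma\subseteq W_n,\ |\gamma|\leq 2n^2}\BRW(\gamma)\geq c''\sum_{\gamma:x\to y,\ \gamma\subseteq W_n,\ |\gamma|\leq 2n^2}\SRW(\gamma)\geq c''Cg(x,y),
\]
which is the asserted bound with constant $c''C=C(U,V)$.

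The main obstacle is the uniform-in-$K$ killing estimate: no dependence on the size or diameter of $K$ can be tolerated, since $K$ may be as large as all of $B_n^c$, and the resolution is to dominate $\{\Snake_z\text{ visits }K\}$ by the event that $\Snake_z$ exits a ball of radius $\asymp n$ about $z$, split into $2d$ coordinate half-space events, where the sharp one-dimensional bound of Proposition~\ref{1-d} applies. Note that $2n^2\cdot n^{-2}$ is of order $1$, so the last step uses this estimate at exactly its critical strength; anything of smaller order would also suffice but is not needed.
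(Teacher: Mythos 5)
Your proof is correct and follows essentially the same route as the paper's: restrict to paths of length at most $2n^2$ staying in a region at distance $\succeq n$ from $K$, bound the killing rate there by $\preceq n^{-2}$ via the coordinate half-space reduction to Proposition~\ref{1-d}, and invoke Lemma~\ref{pro-g1} for the $\SRW$-sum. Your insertion of the intermediate region $W_n$ is a slightly more careful bookkeeping than the paper's (which constrains paths to $A_n$ directly even though Lemma~\ref{pro-g1} is stated for paths in $B_n$), but the substance is identical.
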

\begin{proof}
Since $K$ is outside $B_n$, for any $z\in A_n$, $\dist(z,K)\succeq n$. By Proposition \ref{1-d}, one can see that $\pS_K(x)\preceq \dist(x,K)^{-2}$. Hence $\KRW(z)=\rS_K(z)\asymp \pS_K(z)\preceq n^{-2}$. Then we have, for any $\gamma:x\rightarrow y, \gamma\subseteq A_n, |\gamma| \leq2n^2$, $\BRW(\gamma)/\SRW(\gamma)\geq (1-c/n^2)^{2n^2}\succeq 1$ (provided that $n$ is sufficiently large).
Then we have:
$$
G_K(x,y)\geq\sum_{\gamma:x\rightarrow y, \gamma\subseteq A_n, |\gamma| \leq2n^2}\BRW(\gamma)
\succeq \sum_{\gamma:x\rightarrow y, \gamma\subseteq A_n, |\gamma| \leq2n^2}\SRW(\gamma)
\stackrel{\eqref{pro-b1}}{\succeq} g(x,y).
$$
\end{proof}

Before giving a better form, we turn to the escape probability and prove:
\begin{lemma}\label{lm-escape}
For any $\lambda>0$, there exists a positive $C=C(\lambda)$, such that, for any $A\ssubset \Z^d$ and $x\in\Z^d$ satisfying $\|x\|\geq(1+\lambda) \Rad(A)$, we have:
\begin{equation}\label{bd-escape}
\EsC_A(x)>C.
\end{equation}
\end{lemma}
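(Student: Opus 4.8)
The plan is to reduce to the case of a ball and then combine the explicit structure of the reversed infinite snake with Jensen's inequality. First I would note that a reversed infinite snake which avoids $\Ball(\Rad(A))$ outside its root bush in particular avoids $A$ outside its root bush, so monotonicity of the escape probability gives $\EsC_A(x)\geq\EsC_{\Ball(\Rad(A))}(x)$; since $\Rad(\Ball(R))\leq R$, it suffices to prove the statement when $A=\Ball(R)$. One may also assume $\lambda\leq1$ (for larger $\lambda$ the hypothesis is only stronger, so one keeps the same constant) and $R=\Rad(A)\geq R_0(\lambda)$ for a suitable $R_0(\lambda)$: for $R<R_0(\lambda)$ the sets $\Ball(R)$ form a finite family, and for each of them $\EsC_{\Ball(R)}$ is strictly positive off $\Ball(R)$ (immediate from the identity below) and tends to $1$ at infinity (since $1-\EsC_{\Ball(R)}(x)\leq\qrS_{\Ball(R)}(x)\to0$), so its infimum over $\{\|x\|\geq(1+\lambda)R\}$ is positive. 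The identity I would use writes $\EsC_A(x)$ in terms of the backbone $S^-_x=(S^-_x(i))_{i\geq0}$ of the reversed infinite snake --- a random walk with jump distribution $\Rtheta$ --- and the killing function $\rS_A$ of \eqref{killing} (the probability an adjoint snake visits $A$): conditioning on $S^-_x$ and using that the bushes grafted along the spine are independent adjoint snakes,
\[
\EsC_A(x)=\E^-_x\Bigl[\,\prod_{i\geq1}\bigl(1-\rS_A(S^-_x(i))\bigr)\Bigr],
\]
where a backbone step into $A$ contributes a factor $0$ (as $\rS_A\equiv1$ on $A$), so backbone‑avoidance of $A$ is automatically encoded.

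Now set $R_1=(1+\tfrac{\lambda}{2})R$ and $G=\{S^-_x(i)\notin\Ball(R_1)\text{ for all }i\geq0\}$. On $G$ each backbone vertex lies at distance $\geq\tfrac{\lambda}{2}R$ from $A$, so by Theorem~\ref{lem-visiting-finite} and \eqref{r-p} we have $\rS_A(S^-_x(i))\preceq\pS_A(S^-_x(i))\leq C(\lambda R)^{-2}\leq\tfrac12$ (using $R\geq R_0(\lambda)$), whence $1-\rS_A(S^-_x(i))\geq e^{-C\pS_A(S^-_x(i))}$ on $G$. Restricting the expectation above to $G$ and applying Jensen's inequality to $t\mapsto e^{-t}$ under the conditional law $P(\,\cdot\mid G)$ gives
\[
\EsC_A(x)\ \geq\ P(G)\,\exp\!\Bigl(-\frac{C}{P(G)}\;\E^-_x\Bigl[\mathbf 1_G\sum_{i\geq1}\pS_A(S^-_x(i))\Bigr]\Bigr),
\]
so the problem reduces to bounding the two quantities on the right.

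For the expectation, since on $G$ every backbone vertex of index $\geq1$ is at distance $\geq\tfrac{\lambda}{2}R$ from $A$, one has $\E^-_x[\mathbf 1_G\sum_{i\geq1}\pS_A(S^-_x(i))]\leq\sum_{y:\,\dist(y,A)\geq\lambda R/2}g^-(x,y)\pS_A(y)$, with $g^-(x,y)=g(y,x)\asymp|x-y|^{2-d}$; a routine splitting according to $s=\dist(y,A)$ --- using $\pS_A(y)\asymp s^{-2}$ for $\tfrac{\lambda}{2}R\leq s\leq R$ and $\pS_A(y)\asymp R^{d-4}s^{2-d}$ for $s\geq R$ (both from Theorem~\ref{lem-visiting-finite}), together with $\sum_{\dist(y,A)\leq R}|x-y|^{2-d}\preceq R^2$ and $\sum_{\|y\|\geq2R}|x-y|^{2-d}\|y\|^{2-d}\preceq R^{4-d}$ (convergent since $d\geq5$) --- bounds it by $C\lambda^{-2}$. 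For $P(G)$, the hypothesis $\|x\|\geq(1+\lambda)R$ puts $x$ at distance $\geq\tfrac{\lambda}{2}R$, hence at relative distance $\gtrsim\lambda$, from $\Ball(R_1)$, and standard random‑walk hitting estimates for the $\Rtheta$‑walk yield $P(G)=P^-_x(\text{the }\Rtheta\text{-walk from }x\text{ never enters }\Ball(R_1))\geq c(\lambda)$ with $c(\lambda)\asymp\lambda$. Combining, $\EsC_A(x)\geq c(\lambda)\exp\bigl(-C\lambda^{-2}/c(\lambda)\bigr)=:C(\lambda)>0$.

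The hard part will be the lower bound $P(G)\geq c(\lambda)$: when $\lambda$ is small, $x$ lies only at relative distance $\asymp\lambda$ from $\Ball(R_1)$, so the elementary bound $P^-_x(\text{hit }\Ball(R_1))\preceq(R_1/\|x\|)^{d-2}$ is vacuous, and one needs instead $P^-_x(\text{hit }\Ball(R_1))\leq1-c\lambda$, which I would obtain from the gambler's‑ruin asymptotics in a large annulus $\Ball(R_1)\subseteq\Ball(NR_1)$ (for a fixed $N=N(d,\mu,\theta)$) together with the Markov property. Everything else --- the product formula for $\EsC_A$, the first‑moment estimate, and the application of Jensen's inequality --- is routine once Theorem~\ref{lem-visiting-finite} is available.
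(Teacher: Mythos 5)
Your argument is correct, but it takes a genuinely different route from the paper. The paper does not use the backbone product representation at all: it fixes $B=\Ball(2c_1\Rad(A))$ with $c_1$ chosen (via Lemma \ref{pro-inf-ball} and monotonicity) so that $\EsC_A(z)\geq 1/2$ off $B$, writes $\EsC_A(x)=\sum_{z\in B^c}\Hm^{B}_\KRW(z,x)\EsC_A(z)$, and then lower-bounds the total mass $\sum_{z\in B^c}\Hm^{B}_\KRW(z,x)$ by an averaging trick: summing the first-visit identity $G_A(y,x)=\sum_{z\in B^c}G_A(y,z)\Hm^{B}_\KRW(z,x)$ over $y$ in a distant annulus $D$ and comparing $\sum_{y\in D}G_A(y,x)\asymp r^2$ (from Lemma \ref{bd_G-0}) with $\sum_{y\in D}g(y,z)\preceq r^2$. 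Your proof instead reduces to $A=\Ball(R)$ by monotonicity, writes $\EsC_A(x)=\E^-_x[\prod_{i\geq1}(1-\rS_A(S^-_x(i)))]$, and applies Jensen on the event $G$ that the backbone avoids $\Ball((1+\lambda/2)R)$; this trades the paper's Green-function machinery (Lemmas \ref{pro-g1} and \ref{bd_G-0}) for a first-moment computation via Theorem \ref{lem-visiting-finite} plus the boundary escape estimate $P^-_x(\text{avoid }\Ball(R_1))\succeq\lambda$. That last estimate is exactly what the paper's averaging trick is designed to avoid: for $x$ at relative distance $\lambda$ from the ball the crude bound $P(\text{hit})\preceq(R_1/\|x\|)^{d-2}$ is useless, and the refined statement for a nonsymmetric walk with unbounded (weak $L^d$) jumps needs a gambler's-ruin argument with overshoot control; it is standard (and provable with the paper's Overshoot Lemma and Lemma \ref{pro-g1}) but is the one input you must still make rigorous, as you note. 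A minor cost of your route is the worse $\lambda$-dependence, $C(\lambda)\asymp\lambda\exp(-C\lambda^{-3})$, which is harmless here since only positivity is needed. All other steps (the product formula, the bound $\rS_A\preceq\pS_A\preceq(\lambda R)^{-2}$ on $G$, the sum $\sum_y g(y,x)\pS_A(y)\preceq\lambda^{-2}$, and the compactness argument for small $\Rad(A)$, which the paper also invokes in the remark following the lemma) check out.
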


\begin{proof}
By lemma \ref{pro-inf-ball}, we can find a positive constant $c_1>1$, such that, for any $z\in\Z^d$ with $\|z\|\geq c_1\Rad (A)$, we have $\EsC_A(z)\geq1/2$. Write $r=\Rad (A)$, $B=\Ball(2c_1r)$ and
$D=\Ball(4c_1r)\setminus \Ball(3c_1r)$. Without loss of generality, assume $1+\lambda< c_1/2$ and $\|x\|<c_1r$.  For any $y\in D$, by Lemma \ref{bd_G-0} (let $U=\{x\in \R^d:\|x\|\in (1+\lambda,4c_1)\}$), we have (when $r$ is large):
$G_A(y,x)\asymp g(y,x)\asymp r^{2-d}$.
Applying the First-Visit Lemma, we get:
\begin{equation*}
G_A(y,x)=\sum_{z\in B^c}G_A(y,z)\Hm^{B}_\KRW(z,x).
\end{equation*}
Hence,
$$
\sum_{y\in D}G_A(y,x)=\sum_{y\in D}\sum_{z\in B^c}G_A(y,z)\Hm^{B}_\KRW(z,x).
$$
Note that the left hand side is $\asymp r^d\cdot r^{2-d}=r^2$ and the right hand side is not larger than:
$$
\sum_{y\in D}\sum_{z\in B^c}g(y,z)\Hm^{B}_\KRW(z,x)=\sum_{z\in B^c}\Hm^{B}_\KRW(z,x)\sum_{y\in D}g(y,z)
\preceq \sum_{z\in B^c}\Hm^{B}_\KRW(z,x)\cdot r^2.
$$
This implies $\sum_{z\in B^c}\Hm^{B}_\KRW(z,x)\succeq1$. Therefore we have:
\begin{equation*}
\EsC_A(x)=\sum_{z\in B^c}\Hm^{B}_\KRW(z,x)\EsC_A(z)\geq 1/2\cdot\sum_{z\in B^c}\Hm_A^{B}(x,z)\succeq1,
\end{equation*}
which completes the proof.
\end{proof}

\begin{remark}
In fact we prove \eqref{bd-escape} only when $\Rad(A)$ is large. We ignore the case when $\Rad(A)$ is not large since this can be done by a standard argument as follows. If $\Rad(A)$ is not sufficiently large, there are only finite possibilities of $A$. For each of those $A$, we have already known the asymptotics of $\EsC_A(x)$ ($\lim_{x\rightarrow\infty}\EsC_A(x)=1$). On the other hand, it is obvious that for any $\|x\|>\Rad(A)$, $\EsC_A(x)>0$. Hence we can find some $C(A)>0$ satisfying \eqref{bd-escape}. Since there are finite many $C(A)$'s, we can simply choose $C$ to be the smallest one of those $C(A)$ (together with the one for sufficiently large $A$). We will also omit this type of standard arguments later. In fact, we have done this in the proof of Theorem \ref{lem-visiting-finite}.
\end{remark}

Now we are ready to prove the following bound of Green function:
\begin{lemma}\label{bd_Green}
For any $\lambda>0$, there exists $C=C(\lambda)>0$, such that:
for any $A\ssubset \Z^d$ and $x,y\in \Z^d$ with $\|x\|,\|y\|>(1+\lambda)\Rad (A)$, we have:
\begin{equation}\label{bd-Green}
G_A(x,y)\geq C g(x,y).
\end{equation}
\end{lemma}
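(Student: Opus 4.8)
The plan is to assemble \eqref{bd-Green} from three ingredients: the single-scale comparison of Lemma \ref{bd_G-0}, the First-Visit Lemma (Lemma \ref{bd-1}) together with the Overshoot Lemma (Lemma \ref{overshoot}), and a uniform lower bound on the escape probability of the \emph{forward} infinite snake. Write $r=\Rad(A)$ and fix a large constant $M=M(\lambda)$. Since the statement is symmetric in $x,y$, I would treat the case $\|x\|\le\|y\|$; the case $\|x\|>\|y\|$ is identical after replacing $\theta$ by $\Rtheta$ and $\EsC^+_A$ (below) by $\EsC_A$, which is legitimate because $G_A(x,y)$ agrees with the Green function of the $\Rtheta$-walk killed at rate $\rS_A$ up to the bounded factor $(1-\rS_A(x))/(1-\rS_A(y))\in[1-\mu(0),(1-\mu(0))^{-1}]$, and for $\EsC_A$ the needed bound is already Lemma \ref{lm-escape}.

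First I would record a ``comparable points'' fact: if $\tfrac12\|u\|\le\|v\|\le 2\|u\|$ and $\max(\|u\|,\|v\|)>Mr$, then $G_A(u,v)\asymp g(u,v)$; this is Lemma \ref{bd_G-0} applied with $n\asymp\max(\|u\|,\|v\|)$ and two fixed annuli around the unit sphere, the hypothesis $\max(\|u\|,\|v\|)>Mr$ placing $A$ well inside the inner boundary of $nV$ and $M$ large making $n$ large. The same lemma (with $n\asymp r$ and annuli of radii comparable to $1+\lambda$ and $M$; for bounded $r$ the standard finite-case argument) gives \eqref{bd-Green} whenever $\|x\|,\|y\|\le Mr$. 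Thus \eqref{bd-Green} is proved unless $\|y\|>Mr$, and if in addition $\|x\|\ge\tfrac12\|y\|$ the comparable-points fact finishes it.

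It remains to handle $\|x\|<\tfrac12\|y\|$, $\|y\|>Mr$. With $B=\Ball(\|y\|/2)$ one has $x\in B$, $y\notin B$, so by the First-Visit Lemma $G_A(x,y)=\sum_{w\notin B}\Hm^B_\KRW(x,w)G_A(w,y)$; for $w$ with $\tfrac12\|y\|<\|w\|\le\|y\|$ the comparable-points fact gives $G_A(w,y)\asymp g(w,y)\succeq\|y\|^{2-d}$, hence
\[
G_A(x,y)\ \succeq\ \|y\|^{2-d}\Big(\sum_{w\notin B}\Hm^B_\KRW(x,w)\ -\ \sum_{\|w\|>\|y\|}\Hm^B_\KRW(x,w)\Big).
\]
The Overshoot Lemma bounds the subtracted sum by $\preceq\|y\|^{2-d}$, which is at most half of $\sum_{w\notin B}\Hm^B_\KRW(x,w)$ once $\|y\|$ exceeds a constant depending on $\lambda$ (ensured by taking $M=M(\lambda)$ large), \emph{provided} we have the escape estimate $\sum_{w\notin B}\Hm^B_\KRW(x,w)\ge c(\lambda)>0$. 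Granting this, $G_A(x,y)\succeq_\lambda\|y\|^{2-d}\asymp\|x-y\|^{2-d}\asymp g(x,y)$ (using $\|x\|<\tfrac12\|y\|$), and we are done.

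The escape estimate is the heart of the argument, and is the step I expect to be the main obstacle. Decomposing the forward infinite snake at the first backbone-exit of $B$, exactly as in the proof of Lemma \ref{Conver_G}, gives $\sum_{w\notin B}\Hm^B_\KRW(x,w)(1-\rS_A(w))\EsC^+_A(w)=(1-\rS_A(x))\EsC^+_A(x)$, whence $\sum_{w\notin B}\Hm^B_\KRW(x,w)\ge(1-\rS_A(x))\EsC^+_A(x)\ge(1-\mu(0))\,\EsC^+_A(x)$, since $\rS_A(x)=P(\Snake'_x\text{ visits }A)\le\mu(0)$ for $x\notin A$. So everything reduces to $\EsC^+_A(x)\ge C(\lambda)>0$ for $\|x\|>(1+\lambda)r$ — the forward analogue of Lemma \ref{lm-escape}, proved the same way: Lemma \ref{pro-inf-ball} yields an absolute $c_1$ with $\EsC^+_A\ge\tfrac12$ and $\rS_A\le\tfrac12$ on $\{\|z\|\ge c_1r\}$; for $(1+\lambda)r<\|x\|<c_1r$ one takes $D=\Ball(4c_1r)\setminus\Ball(3c_1r)$ and $B'=\Ball(2c_1r)$, notes $\sum_{w\in D}G_A(x,w)\asymp r^2$ by the comparable-points fact (the aspect ratio is $\preceq_\lambda 1$), uses First-Visit to write $\sum_{w\in D}G_A(x,w)=\sum_{z\notin B'}\Hm^{B'}_\KRW(x,z)\sum_{w\in D}G_A(z,w)\preceq r^2\sum_{z\notin B'}\Hm^{B'}_\KRW(x,z)$, so $\sum_{z\notin B'}\Hm^{B'}_\KRW(x,z)\succeq1$, and then the displayed identity with $B'$ in place of $B$ gives $\EsC^+_A(x)\succeq_\lambda 1$. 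In short: the within-scale comparison is free from Lemma \ref{bd_G-0}, but passing between widely separated scales forces control of the probability that the killed walk leaves a large ball before dying, which is why one must first re-run the (somewhat delicate) Green-function comparison of Lemma \ref{lm-escape} for the forward infinite snake; the rest is bookkeeping with the First-Visit and Overshoot Lemmas.
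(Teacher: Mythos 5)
Your proof is correct and follows essentially the same route as the paper's: reduce to comparable scales via Lemma \ref{bd_G-0}, then bridge widely separated scales with the First-Visit and Overshoot Lemmas plus a uniform lower bound on an escape probability. The one place you go beyond the paper is in distinguishing the forward escape probability $\EsC^+_A$ (which is what the exit decomposition from $x$ actually produces) from the reversed one $\EsC_A$ of Lemma \ref{lm-escape} and supplying the forward analogue of that lemma; the paper's proof simply invokes $\EsC_A(x)$ at this step, so your extra care is warranted rather than redundant.
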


\begin{proof}
Without loss of generality, assume $\|x\|\leq \|y\|$. By Lemma \ref{bd_G-0} one can assume $\|y\| >10\|x\|$ and note that under this assumption $g(x,y)\asymp \|y\|^{2-d}$. Let $B=\Ball(\|y\|/2)$ and $C=\Ball(3\|y\|/4)$. For any $z\in C\setminus B$, also by Lemma \ref{bd_G-0}, we have $G_A(y,z)\asymp \|y\|^{2-d}$.
Applying the First-Visit Lemma, we have:
\begin{align*}
G_A(x,y)&=\sum_{z\in B^c}\Hm^{B}_\KRW(x,z)G_A(z,y)\succeq\sum_{z\in C\setminus B}\Hm^{B}_\KRW(x,z)\|y\|^{2-d}\\
&=\|y\|^{2-d}(\sum_{z\in B^c}\Hm^{B}_\KRW(x,z)-\sum_{z\in C^c}\Hm_A^{B}(x,z))\\
&\geq \|y\|^{2-d}(\EsC_A(x)-c\|y\|^2/\|y\|^d),
\end{align*}
where for the last step we use the Overshoot Lemma. Therefore, when $\Rad (A)$ is large enough, by Lemma \ref{lm-escape}, $G_A(x,y)\succeq \|y\|^{2-d}\asymp g(x,y)$.
\end{proof}

\section{Proof of Theorem \ref{bd-finite}.}

\begin{proof}By cutting $A$ into small pieces, it is enough to show \eqref{bd-p} under the assumption of
$\|x\|\geq 3 \Rad(A)$.
Also, as before, we can assume $r=\Rad(A)$ is sufficiently large. Let $B=\Ball(2r)$.

\emph{\textbf{Upper bound.}}
By \eqref{p2} and the First-Visit Lemma, we have
$$
\pS_A(x)=\sum_{y\in A}G_A(x,y)=\sum_{y\in A}\sum_{z\in B^c}G_A(x,z)\Hm^{B}_\KRW(z,y).
$$
We will decompose it into two parts and estimate them separately.

Let $D=\{z\in \Z^d: \dist(z,x)\leq 0.1\dist(x,A)\}$. Note that when $z\in B^c\setminus D$,
$\dist(x,z)\succeq\dist(x,A)$ and $\EsC_A(z)\asymp1$ by Lemma \ref{lm-escape}. Hence,
\begin{align*}
\sum_{y\in A}\sum_{z\in B^c\setminus D}&G_A(x,z)\Hm^{B}_\KRW(z,y)\preceq
\sum_{y\in A}\sum_{z\in B^c\setminus D}\dist(x,A)^{2-d}\Hm^{B}_\KRW(z,y)\\
&\asymp\dist(x,A)^{2-d}\sum_{y\in A}\sum_{z\in B^c\setminus D}\Hm^{B}_\KRW(z,y)\EsC_A(z)\\
&\leq\dist(x,A)^{2-d}\sum_{y\in A}\EsC_A(y)=\dist(x,A)^{2-d}\BCap(A).\\
\end{align*}

When $z\in D$, $\dist(z,B)\asymp\dist(x,A)$. By considering the position where the first jump falls into, we have:
$$
\Hm^{B}_\KRW(z,y)\leq\sum_{w\in B}\theta(w-z)G_A(w,y).
$$
Hence,
\begin{align*}
\sum_{y\in A}&\sum_{z\in D}G_A(x,z)\Hm^{B}_\KRW(z,y)\preceq
\sum_{z\in D}g(x,z)\sum_{w\in B}\theta(w-z)\sum_{y\in A}G_A(w,y)\\
&=\sum_{z\in D}g(x,z)\sum_{w\in B}\theta(w-z)\pS_A(w)
\leq\sum_{z\in D}g(x,z)\sum_{w\in B}\theta(w-z)\\
&\stackrel{\eqref{as1}}{\preceq}\sum_{z\in D}g(x,z)\dist(z, B)^{-d}\asymp\sum_{z\in D}g(x,z)\dist(x,A)^{-d}
\preceq\dist(x,A)^{2-d}.\\
\end{align*}
This completes the proof of the upper bound.

\emph{\textbf{Lower bound.}} First choose some $a>1$, such that for any $s\geq1$,
\begin{equation}\label{ca}
|\Ball(s)|\cdot\theta\{\left(\Ball((a-1)s)\right)^c\}\leq \frac{\BCap(\{0\})}{2},
\end{equation}
Note that our assumption of $\theta$ guarantees that $\theta\{\left(\Ball((a-1)s)\right)^c\}\preceq ((a-1)s)^{-d}$.

Write $\rho=\dist(x,A)$ and let $C=\Ball(a\rho)$. Note that $\rho\geq 2r$ and $\rho(x,B)\geq r$. Hence
$r\leq \rho/2$, $B\subseteq \Ball(\rho)$ and for any $w\in B, z\in C^c$,
\begin{equation}\label{ca2}
\rho(w,z)\geq (a-1)\rho.
\end{equation}

Then
\begin{align*}
\pS_A(x)&=\sum_{y\in A}G_A(x,y)=\sum_{y\in A}\sum_{z\in B^c}G_A(x,z)\Hm^{B}_\KRW(z,y)\\
&\geq \sum_{y\in A}\sum_{z\in C\setminus B}G_A(x,z)\Hm^{B}_\KRW(z,y)
\succeq \sum_{y\in A}\sum_{z\in C\setminus B} (2a\rho)^{2-d}\Hm^{B}_\KRW(z,y),
\end{align*}
We use the last Lemma in the last step.
It is sufficient to show:
\begin{equation}\label{b1}
\sum_{y\in A}\sum_{z\in C\setminus B} \Hm^{B}_\KRW(z,y)\succeq\BCap(A).
\end{equation}
Note that:
\begin{align*}
\sum_{y\in A}&\sum_{z\in C\setminus B} \Hm^{B}_\KRW(z,y)\geq
\sum_{y\in A}\sum_{z\in C\setminus B} \Hm^{B}_\KRW(z,y)\EsC_A(z)\\
=&\sum_{y\in A}(\EsC_A(y)-\sum_{z\in C^c} \Hm^{B}_\KRW(z,y)\EsC_A(z))
\geq\BCap(A)-\sum_{y\in A}\sum_{z\in C^c}\Hm^{B}_\KRW(z,y).\\
\end{align*}
As in the proof for the upper bound, we have:
\begin{align*}
\sum_{y\in A}&\sum_{z\in C^c}\Hm^{B}_\KRW(z,y)
\leq\sum_{y\in A}\sum_{z\in C^c}\sum_{w\in B}\theta(w-z)G_A(w,y)\\
=&\sum_{y\in A}\sum_{w\in B}G_A(w,y)\sum_{z\in C^c}\theta(w-z)
\stackrel{\eqref{ca2}}{\leq}\sum_{w\in B}\sum_{y\in A}G_A(w,y)\theta\{\left(\Ball((a-1)\rho)\right)^c\}\\
=&\theta\{\left(\Ball((a-1)\rho)\right)^c\}\sum_{w\in B}p_A(w)
\leq\theta\{\left(\Ball((a-1)\rho)\right)^c\}|B|
\stackrel{\eqref{ca}}{\leq}\frac{\BCap(A)}{2}.
\end{align*}
Now \eqref{b1} follows and this completes the proof of the lower bound.
\end{proof}

\section*{Acknowledgements}

We thank Professor Omer Angel for valuable comments on earlier versions of this paper.

\end{document}